\documentclass[greek,english,11pt,a4paper,openany]{article}
\usepackage{amssymb,latexsym, amsfonts, amsmath, amsthm}
\usepackage{graphicx}
\usepackage{pgfpages}
\usepackage{enumerate}
\usepackage[backgroundcolor=cyan!20, linecolor=black]{todonotes}
\usepackage{tabularx}
\newcolumntype{Q}{>{$\displaystyle}c<{$}} 
\usepackage{booktabs}
\usepackage[margin=1in]{geometry}
\usepackage{tikz}
\usetikzlibrary{arrows,calc,matrix}

\theoremstyle{definition}
\newtheorem{thm}{Theorem}[section]
\newtheorem*{theorem}{Theorem}

\newtheorem{lemma}[thm]{Lemma}

\newtheorem{remark}[thm]{Remark}

\def\End{\operatorname{End}}
\def\GL{\operatorname{GL}}
\def\Ind{\operatorname{Ind}}

\def\supp{\operatorname{supp}}
\def\Latt{\operatorname{Lat}}
\def\U{\operatorname{U}}
\def\SU{\operatorname{SU}}
\def\SL{\operatorname{SL}}
\def\P{\operatorname{P}}
\def\M{\operatorname{M}}
\def\Hom{\operatorname{Hom}}
\def\Irr{\operatorname{Irr}}

\def\dim{\operatorname{dim}}
\def\Ql{\overline{\mathbb{Q}}_{\ell}}
\def\Zl{\overline{\mathbb{Z}}_{\ell}}
\def\Fl{\overline{\mathbb{F}}_{\ell}}
\def\Gal{\operatorname{Gal}}

\def\St{\operatorname{St}}
\def\adiag{\operatorname{adiag}}
\def\diag{\operatorname{diag}}

\def\Res{\operatorname{Res}}
\def\Ind{\operatorname{Ind}}
\def\GE{{G_E}}
\def\GEi{{G_{E_i}}}

\def\ind{\operatorname{ind}}
\def\I{\operatorname{I}}
\def\R{\operatorname{R}}
\def\presuper#1#2%
  {\mathop{}%
   \mathopen{\vphantom{#2}}^{#1}%
   \kern-\scriptspace%
   #2}
\setlength\parindent{0pt}
\setlength\parskip{10pt}

\begin{document}

\title{$\ell$-modular representations of unramified $p$-adic $\U(2,1)$}\author{Robert James Kurinczuk}
\date{\today}
\maketitle
\begin{abstract}
\noindent We construct all cuspidal $\ell$-modular representations of a unitary group in three variables attached to an unramified extension of local fields of odd residual characteristic $p$ with $\ell\neq p$.  We describe the $\ell$-modular principal series and show that the supercuspidal support of an irreducible $\ell$-modular representation is unique up to conjugacy.
\end{abstract}
\section{Introduction}
The abelian category $\mathfrak{R}_R(G)$ of smooth representations of a reductive $p$-adic group $G$ over an algebraically closed field $R$ has been well studied when $R$ has characteristic zero.  The same cannot be said when $R$ has positive characteristic $\ell$; here many questions remain unanswered.  In this paper, we are concerned only in the case $\ell\neq p$. We study the set of isomorphism classes of irreducible $R$-representations $\Irr_R(G)$, eventually specialising to $G=\U(2,1)$ a unitary group in three variables attached to an unramified extension $F/F_0$ of non-archimedean local fields of odd residual characteristic.   All $R$-representations henceforth considered will be smooth.

A classical strategy for the classification of irreducible $R$-representations is to split the problem into two steps: firstly, for any parabolic subgroup $P$ of $G$ with Levi decomposition $P=M\ltimes N$ and any $\sigma\in\Irr_R(M)$, decompose the (normalised) parabolically induced $R$-representation $i^G_P(\sigma)$; and secondly, construct the irreducible $R$-representations which do not appear as a subquotient of a $R$-representation appearing in the first step, the \emph{supercuspidal} $R$-representations.  For any parabolic subgroup $P$, a supercuspidal irreducible $R$-representation $\pi$ will have trivial Jacquet module $r^G_P(\pi)=0$, by Frobenius reciprocity ($i^G_P$ is right adjoint to $r^G_P$).  When $R$ has characteristic zero the irreducible \emph{cuspidal} $R$-representations, those whose Jacquet modules are all trivial, are all supercuspidal.  However in positive characteristic $\ell$, there can exist irreducible cuspidal non-supercuspidal $R$-representations.  

By transitivity of the Jacquet module and the geometric lemma, see Vign\'eras \cite[II 2.19]{Vig96}, the \emph{cuspidal support} of $\pi\in\Irr_R(G)$, that is the set of pairs $(M,\sigma)$ with $M$ a Levi factor of a parabolic subgroup $P$ of $G$ and $\sigma$ an irreducible cuspidal $R$-representation of $M$ such that $\pi$ is a subrepresentation of $i^G_P(\sigma)$, is a non-empty set consisting of a single $G$-conjugacy class; we say that the cuspidal support is unique up to conjugacy.  By transitivity of parabolic induction, the \emph{supercuspidal support} of $\pi\in\Irr_R(G)$, that is the set of pairs $(M,\sigma)$ with $M$ a Levi factor of a parabolic subgroup $P$ of $G$ and $\sigma$ an irreducible supercuspidal $R$-representation of $M$ such that $\pi$ is a subquotient of $i^G_P(\sigma)$, is non-empty.  However, in general, it is not known if the supercuspidal support of $\pi$ is unique up to conjugacy.  

For $\GL_n$ and its inner forms, the supercuspidal support of an irreducible $R$-representation is unique up to conjugacy, due to Vign\'eras \cite{Vig96} and M{\'i}nguez--S\'echerre \cite{VA3,VA2}.   The unicity of supercuspidal support is of great importance.  Firstly, the unicity of supercuspidal support (up to inertia) for $\GL_n$ and its inner forms leads to the block decomposition of $\mathfrak{R}_R(G)$ into indecomposable summands, see S\'echerre--Stevens \cite{SeSt2}.  Secondly, it is important in Vign\'eras' $\ell$-modular local Langlands correspondence for $\GL_n$ which is first defined on supercuspidal elements by compatibility with the characteristic zero local Langlands correspondence and then extended to all irreducible $\ell$-modular representations of $\GL_n$.  In this paper, we prove unicity of supercuspidal support for $\U(2,1)$.  We hope this paper is the first step in establishing similar results for $\U(2,1)$ and in extending these to classical groups in general.

Our strategy is first to construct all irreducible cuspidal $R$-representations by compact induction from irreducible $R$-representations of compact open subgroups.  The type of construction we employ has been used to great effect to construct all irreducible cuspidal $R$-representations in a large class of reductive $p$-adic groups when $R$ has characteristic zero: Morris for level zero $R$-representations of any reductive $p$-adic group \cite{Morris}, Bushnell--Kutzko for $\GL_n$ and $\SL_n$ \cite{BK93,BKSL}, S\'echerre--Stevens for inner forms of $\GL_n$ \cite{SeSt}, Yu--Kim for arbitrary connected reductive groups under ``tame'' conditions \cite{Yutame,Kimtame}, and Stevens for classical $p$-adic groups with $p$ odd \cite{St08}.  Vign\'eras \cite{Vig96} and M{\'i}nguez--S\'echerre, \cite{VA3,VA2}, adapted the characteristic zero constructions for $\GL_n$ and its inner forms to $\ell$-modular representations. We perform similar adaptations to Stevens' construction to exhaust all irreducible cuspidal $\ell$-modular representations of $\U(2,1)$.
 
\begin{theorem}[{Theorem \ref{plcusps}}]
Let $G=\U(2,1)$ and $\pi$ be an irreducible cuspidal $R$-representation of $G$.  Then there exist a compact open subgroup $J$ of $G$ with pro-unipotent radical $J^1$ such that $J/J^1$ is a finite reductive group, an irreducible $R$-representation $\kappa$ of $J$ and an irreducible cuspidal $R$-representation $\sigma$ of $J/J^1$ such that $\pi\simeq \ind_J^G(\kappa\otimes \sigma)$.
\end{theorem}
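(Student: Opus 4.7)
The plan is to split the proof by depth: depth zero uses a Morris-type level-zero construction, positive depth uses Stevens's construction of skew simple (semisimple) types for unitary groups with $p$ odd, in both cases adapted to $\ell$-modular coefficients following the template of Vign\'eras and M\'inguez--S\'echerre. Since $\U(2,1)$ has small rank, the parahorics and skew semisimple strata can be listed explicitly, and the finite reductive quotients $J/J^1$ that arise are among $\GL_i(k)$ or $\U_j(k_E/k)$, whose irreducible cuspidal $R$-representations are classical.

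For $\pi$ of depth zero I would show that for some parahoric $P$ with pro-$p$ radical $P^1$ the invariants $\pi^{P^1}$ are non-zero and the $P/P^1$-action has an irreducible cuspidal $R$-summand $\sigma$. Taking $J$ to be the $G$-normaliser of $P$, $J^1=P^1$, and $\kappa$ trivial, a Mackey calculation on $J\backslash G/J$ would give $\pi\simeq \ind_J^G(\sigma)$, the key point being that cuspidality of $\pi$ pins down the intertwining of $\sigma$ in $G$ to be $J$. The only characteristic-sensitive input is cuspidality theory for finite reductive groups, which behaves well for $\ell\neq p$.

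For positive depth, I would attach to $\pi$ a skew semisimple stratum $[\Lambda,n,0,\beta]$ yielding subgroups $H^1\subseteq J^1\subseteq J$ and a semisimple character $\theta$ of $H^1$ occurring in $\pi|_{H^1}$. Since $J^1/H^1$ is a finite $p$-group and $\ell\neq p$, the Heisenberg extension $\eta$ of $\theta$ to $J^1$ exists uniquely over $R$ by reduction mod $\ell$ of the standard $\Zl$-theory; a $\beta$-extension $\kappa$ of $\eta$ to $J$ is constructed similarly. Pairing $\kappa$ with an irreducible cuspidal $R$-representation $\sigma$ of $J/J^1$, the intertwining-equals-$J$ computation for maximal simple types forces $\ind_J^G(\kappa\otimes\sigma)$ to be irreducible and cuspidal.

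The main obstacle is exhaustion: for a given irreducible cuspidal $\pi$ one must actually produce a stratum whose $(J,\kappa\otimes\sigma)$ occurs in $\pi$. When $\pi$ admits an integral $\Zl$-lift this can be deduced from Stevens's characteristic-zero exhaustion combined with reduction mod $\ell$, since a simple character occurring in the lift descends to one in $\pi$. The truly delicate case is that of $\ell$-modular cuspidals without integral lift, where one must redo Stevens's non-intertwining/exhaustion argument directly over $R$, carefully controlling the $R$-Hecke algebras $\mathcal{H}_R(G,\kappa\otimes\sigma)$ and showing that if $\pi$ contained no such maximal pair then $\pi$ would admit a non-zero Jacquet module along some proper parabolic, contradicting cuspidality.
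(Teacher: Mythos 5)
Your proposal captures the overall shape of the argument (Stevens's construction of types from skew semisimple strata, adapted to $\ell$-modular coefficients in the style of Vign\'eras and M\'inguez--S\'echerre, with level zero as a degenerate case), but it glosses over the two points where the $\ell$-modular setting genuinely differs from characteristic zero, and one of your choices would lead you into a dead end.

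On irreducibility: you deduce irreducibility of $\ind_J^G(\kappa\otimes\sigma)$ from ``intertwining equals $J$''. In characteristic $\ell$ this is not enough. Computing $\End_G(\ind_J^G(\kappa\otimes\sigma))\simeq R$ (via $\R_\kappa\circ\I_\kappa(\sigma)\simeq\sigma$, which is Theorem~\ref{pl=lz} together with Lemma~\ref{lemma111}) a priori gives only a unique irreducible quotient. You must additionally invoke the quasi-projectivity of the compactly induced representation, so that Theorem~\ref{qplemma} upgrades the endomorphism computation to irreducibility. The paper does exactly this in part~(1) of Theorem~\ref{plcusps}, and it is an essential $\ell$-modular ingredient, not a formality.

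On exhaustion: your plan to split by whether $\pi$ admits a $\Zl$-lift would run you straight into the cuspidal non-supercuspidal $\ell$-modular representations of $\U(2,1)$ that do not lift (Remark~\ref{notlift}), for which your first branch gives nothing and your second branch is only a sketch. In fact the split is unnecessary: containment of a skew semisimple stratum is a statement about the restriction of $\pi$ to pro-$p$ groups, so Stevens's exhaustion (\cite[Theorem~5.1]{St05}, quoted here as Theorem~\ref{plss}) applies directly to $\ell$-modular $\pi$, not only after lifting. The genuinely delicate step -- which you only gesture at with ``would admit a non-zero Jacquet module'' -- is the reduction from ``$\pi$ contains $\kappa\otimes\sigma$'' to ``$\P(\Lambda_E)$ is maximal and $\sigma$ is cuspidal''. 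The precise mechanism is the theory of $G$-covers: one constructs $(J,\kappa\otimes\sigma)$ as a $G$-cover of $(T^0,\kappa_T\otimes\sigma)$ over $R$ (Lemma~\ref{plcovers}, which requires exhibiting an invertible element of $\mathcal{H}_R(G,\lambda)$ supported on a strongly positive double coset), and then Vign\'eras's Theorem~\ref{jacquetcommutes}, relating $(j_P)^*$ of Hecke modules to the Jacquet functor, shows that containment of a cover forces $r^G_B(\pi)\neq 0$, contradicting cuspidality. Without naming this cover/Hecke--Jacquet mechanism and checking its axioms over $R$, the exhaustion remains an assertion rather than a proof.
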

The construction is explicit and, furthermore, all $R$-representations $$\I_{\kappa}(\sigma)= \ind_J^G(\kappa\otimes \sigma)$$ constructed this way are cuspidal. Moreover, we show that $\I_{\kappa}(\sigma)$ is supercuspidal if and only if $\sigma$ is supercuspidal (Remark \ref{scusplift}). In work in progress, joint with Stevens, we extend Stevens' construction for arbitrary classical groups to the $\ell$-modular setting.

In the split case, for general linear groups all irreducible cuspidal $\ell$-modular representations lift to integral $\ell$-adic representations.  For inner forms of $\GL_n$, this is no longer true; some cuspidal non-supercuspidal $\ell$-modular representations do not lift.  For $\U(2,1)$ we also find cuspidal non-supercuspidal $\ell$-modular representations which do not lift (Remark \ref{notlift}).  These non-lifting phenomena appear quite different. For $\U(2,1)$ this non-lifting occurs because, in certain cases, there are $\ell$-modular representations of the finite group $J/J^1$ which do not lift.  For inner forms of $\GL_n$, the non lifting occurs when the normaliser of the reduction modulo-$\ell$ of the inflation of a cuspidal $\ell$-adic representation of an analogous group to $J/J^1$ is larger than the normaliser of all of its cuspidal lifts.  We find that all supercuspidal $\ell$-modular representations of $\U(2,1)$ lift (Remark \ref{scusplift}), as is the case for $\GL_n$ and its inner forms.

Secondly, by studying corresponding Hecke algebras, we find the characters $\chi$ of the maximal diagonal torus $T$ of $\U(2,1)$ such that the principal series $R$-representation $i^{\U(2,1)}_B(\chi)$ is reducible.  We let $\chi_1$ denote the character of $F^\times$ defined by $\chi_1(x)=\chi(\diag(x,\overline{x}x^{-1},\overline{x}^{-1}))$ where $\overline{x}$ is the $\Gal(F/F_0)$-conjugate of $x$.

\begin{theorem}[{Theorem \ref{redpoints}}]
Let $G=\U(2,1)$.  Then $i^G_B(\chi)$ is reducible exactly in the following cases:
\begin{enumerate}
\item $\chi=\delta_B^{\pm \frac{1}{2}}$ where $\delta_B$ is the modulus character;
\item $\chi= \eta\delta_B^{\pm \frac{1}{4}}$ where $\eta$ is any extension of the quadratic class field character $\omega_{F/F_0}$ to $F^\times$;
\item $\chi_1$ is nontrivial, but $\chi_1\mid_{F_0^\times}$ is trivial.
\end{enumerate}
\end{theorem}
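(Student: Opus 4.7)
The plan is to analyse the reducibility of $i^G_B(\chi)$ via Hecke algebras attached to Bushnell--Kutzko covers. For each character $\chi$ of $T$ I would construct a cover $(J,\lambda)$ of $(T\cap J,\chi|_{T\cap J})$, consisting of a compact open subgroup $J$ of $G$ and an irreducible $R$-representation $\lambda$. The functor $\Hom_J(\lambda,-)$ then restricts to an equivalence between the block of $\mathfrak{R}_R(G)$ containing $i^G_B(\chi)$ and the category of right $\mathcal{H}(G,J,\lambda)$-modules, reducing the question to one about a concrete finite-dimensional module over an affine Hecke-type algebra.

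The Weyl group $W=N_G(T)/T$ has order two; let $w$ denote its nontrivial element, so that $(w\chi)(x)=\chi(\overline{x}^{-1})$. I would dichotomise on whether $\chi$ and $w\chi$ agree on the maximal compact subgroup $T^0$ of $T$. When they do not agree, a standard intertwining argument gives $\mathcal{H}(G,J,\lambda)\cong R[X^{\pm 1}]$ and $i^G_B(\chi)$ is irreducible. When they do agree, $\mathcal{H}(G,J,\lambda)$ is a rank-two affine Hecke algebra in which an extra generator $T_s$ satisfies a quadratic relation $(T_s-a)(T_s+1)=0$ whose parameter $a$ is extracted by explicit intertwining. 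The principal series corresponds to the induced module from the character of $R[X^{\pm 1}]$ encoding $\chi$, and its reducibility is equivalent to this induced module being reducible, which happens at specific values of $a$ against the character. In the unramified case, the affine Hecke algebra parameters appropriate to $\U(2,1)$ give the Steinberg reducibility of case~(1); in the depth-zero case twisted by the quadratic unramified character $\omega_{F/F_0}$, a second critical value of $a$ produces case~(2); in the genuinely ramified case, the Weyl-fixedness condition on $T^0$ is equivalent to $\chi_1|_{F_0^\times}$ being trivial with $\chi_1$ nontrivial, yielding case~(3).

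The main obstacle I expect is the uniform construction of $(J,\lambda)$ in case~(3) and the computation of the corresponding Hecke parameter. For characters $\chi_1$ of positive depth whose restriction to $F_0^\times$ is trivial, $\lambda$ must be built from a skew semisimple character on a Moy--Prasad subgroup, and extracting the parameter in the quadratic relation requires a careful intertwining computation reminiscent of those that appear in the cuspidal construction of Theorem \ref{plcusps}. A secondary point is verifying that the $\ell$-modular Hecke parameters do not coincide modulo $\ell$ in a way that would merge or create reducibility points; since $\ell\neq p$ and $p$ is odd, $q$ is invertible in $R$ and the parameters remain generic after reduction, but the degenerations must still be checked case by case. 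I would finally cross-reference the list against Theorem \ref{plcusps} to verify that the cuspidal subquotients produced at each reducibility point are exactly those predicted by the compact-induction classification.
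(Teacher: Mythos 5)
Your overall strategy is the paper's: construct a $G$-cover $(J,\lambda)$ of $(T^0,\lambda_T)$, identify the Hecke algebra $\mathcal{H}(G,\lambda)$, split into the regular case (where the algebra is $R[X^{\pm 1}]$ and $i^G_B(\chi)$ is irreducible) and the non-regular case (where the algebra is generated by two elements with quadratic relations), and read off reducibility from the module theory. However, two points in your sketch deviate from what is actually required.

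First, you describe the non-regular Hecke algebra as having a single extra generator $T_s$ with one quadratic relation $(T_s-a)(T_s+1)=0$. In fact, since the relevant Weyl group is the affine (infinite dihedral) group $\widetilde W$, there are two generators $f_{w_x}$, $f_{w_y}$ each satisfying a quadratic relation, and crucially the parameters may be \emph{unequal}: the paper's Theorem~\ref{Heckealgdesc} gives $(f_{w_x}-q^a)(f_{w_x}+1)=0$ with $a\in\{1,3\}$ (depending on whether $\lambda_T$ factors through the determinant) and $(f_{w_y}-q)(f_{w_y}+1)=0$. The unequal-parameter situation $a=3$ is precisely what separates cases (1) and (2) from one another and from case (3), and a single-parameter picture cannot reproduce the list. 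The dependence of $a$ on $\lambda_T$ comes from the finite-group computation $i^{\M(\Lambda^z_E)}_{B_z}(\overline\sigma)=\rho^z_1\oplus\rho^z_2$ and $d_z=\dim\rho^z_1/\dim\rho^z_2$ via Howlett--Lehrer, and you should say where the two parameter values $q^3$ and $q$ come from.

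Second, and more seriously, knowing the quadratic relations does \emph{not} by itself pin down the reducibility points. What you need is the explicit embedding $j_B:\mathcal{H}(T,\lambda_T)\simeq R[X^{\pm 1}]\hookrightarrow\mathcal{H}(G,\lambda)$, and up to normalisation this sends $X\mapsto\varepsilon\, f_{w_x}\star f_{w_y}$ for a scalar $\varepsilon\in R^\times$ whose value, not just the structure constants of the quadratic relations, governs at which characters of $R[X^{\pm 1}]$ the induced $\mathcal{H}(G,\lambda)$-module becomes reducible. The paper is explicit that determining the sign of $\varepsilon$ is where the difficulty lies: it fixes $\varepsilon=1$ in the case where $\lambda_T$ factors through the determinant by using the constant functions in $i^G_B(\delta_B^{-\frac{1}{2}})$, and in the remaining cases it has to import the $\ell$-adic reducibility computations of Keys, reducing modulo $\ell$ to deduce $\varepsilon=-q$. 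Your appeal to an unspecified ``careful intertwining computation'' does not address this; the paper itself notes that a self-contained computation via the theory of covers should be possible but is nontrivial, and instead imports Keys. Without either the Keys import or that computation, you cannot land the reducibility points precisely as in the statement (for instance you could not distinguish $\chi=\delta_B^{\pm 1/2}$ from $\chi=\eta\delta_B^{\pm 1/4}$).

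A minor further caveat: in the $\ell$-modular setting the cover does not give an equivalence of blocks as you claim; the paper only uses quasi-projectivity (Theorem~\ref{qplemma}) to get a bijection between irreducible quotients and simple modules of $\End_G$, together with Theorem~\ref{jacquetcommutes} to relate the Jacquet module to restriction along $j_B$. This weaker statement is what is actually available and suffices.
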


When $R$ is of characteristic zero this is due to Keys \cite{keys}.  In our proof we need to apply the results of Keys to determine a sign.  It should be possible to remove this dependancy by computation using the theory of covers (\emph{cf.} Blondel \cite[Remark 3.13]{Blarxiv}).  An alternative proof, when $F_0$ is of characteristic zero, would be to use the computations of Keys with Dat \cite[Proposition $8.4$]{Dat}.  

Finally, by studying the interaction of the right adjoints $\R_{\kappa}$ of the functors $\I_{\kappa}$ with parabolic induction we find cuspidal subquotients of the principal series. When cuspidal subquotients appear in the principal series we show exactly which ones from our exhaustive list do; finding that the supercuspidal support of an irreducible $R$-representation is unique up to conjugacy. 

\begin{theorem}[Theorem \ref{scuspsupp}]
Let $\pi$ be an irreducible $R$-representation of $\U(2,1)$.  The supercuspidal support of $\pi$ is unique up to conjugacy.
\end{theorem}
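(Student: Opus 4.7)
The plan is to reduce to the case of cuspidal non-supercuspidal $\pi$ and then detect the Weyl orbit of the relevant character of $T$ using the right adjoint $\R_{\kappa}$ of the compactly induced functor $\I_{\kappa}$ constructed above.

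First I would invoke the standard unicity of the \emph{cuspidal} support, which follows from the geometric lemma and transitivity of the Jacquet module. Since $\U(2,1)$ has semisimple $F_0$-rank one, the diagonal torus $T$ is the only proper Levi up to conjugacy, and every character $\chi$ of $T$ is automatically supercuspidal. Consequently, whenever $\pi$ is non-cuspidal its cuspidal and supercuspidal supports coincide, so unicity of the cuspidal support immediately yields unicity of the supercuspidal support. Supercuspidal $\pi$ have trivial supercuspidal support $(G,\pi)$, so only the case of $\pi$ cuspidal but not supercuspidal remains.

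For such a $\pi$, Theorem \ref{plcusps} together with Remark \ref{scusplift} writes $\pi \simeq \I_{\kappa}(\sigma)$ with $\sigma$ an irreducible cuspidal non-supercuspidal representation of the finite reductive quotient $J/J^1$; such $\sigma$ exist only because we work in positive characteristic $\ell$, and they are classified explicitly by the $\ell$-modular theory of the relevant finite groups of Lie type. The right adjoint $\R_{\kappa}$ of $\I_{\kappa}$ lets me test, by adjunction, whether $\sigma$ occurs in $\R_{\kappa}(i^G_B(\chi))$, and a Mackey-style analysis of $\R_{\kappa}\circ i^G_B$ via the double cosets $J\backslash G/B$ reduces this detection to a computation in the Hecke algebra of $(J,\kappa)$. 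I would then combine this with the reducibility classification of Theorem \ref{redpoints}: only the finitely many reducible principal series can contribute cuspidal non-supercuspidal subquotients, so for each such $\sigma$ I can single out the characters $\chi$ that can produce $\I_{\kappa}(\sigma)$ as a subquotient.

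The main obstacle will be controlling subquotients (as opposed to subrepresentations) under $\R_{\kappa}$ in positive characteristic, since the absence of semisimplicity means the adjunction argument must be supplemented by exactness properties of $\R_{\kappa}$ or a direct Jordan--H\"older analysis of $i^G_B(\chi)$. I expect this to be handled by Hecke-algebra methods for the pair $(J,\kappa\otimes\sigma)$, combined with detailed knowledge of the $\ell$-modular cuspidal representations of $J/J^1$. Once this computation is carried out case by case against the list of Theorem \ref{redpoints}, each cuspidal non-supercuspidal representation is matched to a single Weyl orbit of characters $\chi$, yielding the required unicity.
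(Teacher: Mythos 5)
Your broad strategy matches the paper's: reduce to the cuspidal non-supercuspidal case (the reduction via rank-one and the coincidence of cuspidal and supercuspidal support for non-cuspidal $\pi$ is exactly what the paper does), then write $\pi \simeq \I_{\kappa}(\sigma)$ via Theorem~\ref{plcusps} and use $\R_{\kappa}$-restriction together with the reducibility classification of Theorem~\ref{redpoints} to pin down which $i^G_B(\chi)$ can contain $\pi$. The paper also runs this case by case against explicit decompositions of the reducible principal series (Theorems~\ref{urprincipaldecomp}, \ref{RamifiedPrincseries}, \ref{poslevramdecomp}).

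However, there is a genuine gap: you have misidentified where the hard work lies. The ``subquotients versus subrepresentations'' difficulty you flag is real but comparatively mild --- the paper dispatches it with Lemma~\ref{lemma65}, which places the maximal cuspidal subquotient of $\R_{\kappa_m}(i^G_B(\chi))$ as an actual subrepresentation via the contragredient and adjunction. The step your proposal silently assumes is the \emph{compatibility of strata}: when you write $\pi \simeq \I_{\kappa'}(\sigma')$, the datum $\kappa'$ comes from a skew semisimple stratum $[\Lambda',n',0,\beta']$ attached to $\pi$, whereas the $\kappa$ in the Mackey-style computation of $\R_{\kappa}(i^G_B(\chi))$ must be the one arising from the $G$-cover contained in $\chi$ (built from a possibly different stratum $[\Lambda,n,0,\beta]$). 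Unless these strata, and hence these semisimple characters $\theta'$ and $\theta$, are related by transfer, $\R_{\kappa_m}(\pi)$ vanishes and your detection mechanism sees nothing. Establishing this compatibility is the content of Lemma~\ref{lemmacus} (``parabolic induction preserves the semisimple character up to transfer'') and occupies most of the technical effort in Section~\ref{sect64}: a reduction-to-level-zero induction, a delicate comparison of the groups $J(\beta,\Lambda)$, $J(\beta',\Lambda)$, $H(\beta,\Lambda)$, $H(\beta',\Lambda)$, and a transfer argument through the ambient $\GL_3(F)$ semisimple characters. Your proposal's appeal to ``Hecke-algebra methods'' and ``detailed knowledge of the $\ell$-modular cuspidal representations of $J/J^1$'' gestures at the conclusion of this analysis but does not supply the needed argument.
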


In fact, in many cases, we obtain extra information on the irreducible quotients and subrepresentations which appear.  If $\ell\neq 2$ and $\ell\mid q-1$ we show that all the principal series $R$-representations $i^G_B(\chi)$ are semisimple (Lemma \ref{semisimple}).  If $\ell\mid q+1$ we show that $i^G_B(\chi)$ has a unique irreducible subrepresentation and a unique irreducible quotient and these are isomorphic (Lemma \ref{lemma611}).  A striking example of the reducibilities that occur is when $\chi=\delta_B^{-\frac{1}{2}}$.

\begin{theorem}[{see Theorem \ref{urprincipaldecomp} for more details}]
Let $G=\U(2,1)$.  
\begin{enumerate}
\item If $\ell\nmid (q-1)(q+1)(q^2-q+1)$ then $i^G_B(\delta_B^{-\frac{1}{2}})$ has length two with unique irreducible subrepresentation $1_G$ and unique irreducible quotient $\St_G$.
\item If $\ell\neq 2$ and $\ell\mid q-1$ then $i^G_B(\delta_B^{-\frac{1}{2}})=1_G\oplus \St_G$ is semisimple of length two.
\item If $\ell\neq 3$ and $\ell\mid q^2-q+1$ then $i^G_B(\delta_B^{-\frac{1}{2}})$ has length three with unique cuspidal subquotient. The unique irreducible subrepresentation is not isomorphic to the unique irreducible quotient.
\item If $\ell\neq 2$ and $\ell\mid q+1$ or if $\ell=2$ and $4\mid q+1$,  then $i^G_B(\delta_B^{-\frac{1}{2}})$ has length six with $1_G$ appearing as the unique subrepresentation and the unique quotient and four cuspidal subquotients, one of which appears with multiplicity two. A maximal cuspidal subquotient of $i^G_B(\delta_B^{-\frac{1}{2}})$ is not semisimple.  
\item If $\ell=2$ and $4\mid q-1$, then $i^G_B(\delta_B^{-\frac{1}{2}})$ has length five with $1_G$ appearing as the unique subrepresentation and the unique quotient.  All cuspidal subquotients of $i^G_B(\delta_B^{-\frac{1}{2}})$ are semisimple and the irreducible cuspidal subquotients are pairwise non-isomorphic.
\end{enumerate}
\end{theorem}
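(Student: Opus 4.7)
The plan is to decompose $i^G_B(\delta_B^{-\frac{1}{2}})$ by combining four ingredients: (i) the characteristic zero decomposition due to Keys (case (1) over $\Ql$), transported to characteristic $\ell$ by reducing an integral lattice modulo $\ell$; (ii) Lemma \ref{semisimple} yielding semisimplicity when $\ell\neq 2$ and $\ell\mid q-1$; (iii) Lemma \ref{lemma611} forcing the socle and cosocle to be irreducible and isomorphic when $\ell\mid q+1$; and (iv) the right adjoints $\R_\kappa$ of the exhaustive cuspidal compact induction functors $\I_\kappa$ from Theorem \ref{plcusps}, which detect cuspidal subquotients together with their multiplicities.

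First I would fix a $\Zl$-integral structure on $\delta_B^{-\frac{1}{2}}$ and compute the composition factors of $i^G_B(\delta_B^{-\frac{1}{2}})$ as the reduction modulo $\ell$ of the characteristic zero constituents $1_G$ and $\St_G$. Whether these reductions split further is governed by the decomposition matrix of the finite reductive quotients of the relevant parahoric subgroups. The trivial representation is always irreducible modulo $\ell$, while $\St_G$ reduces non-trivially exactly when $\ell$ divides one of $q-1$, $q+1$, $q^2-q+1$, or $2$ under the auxiliary $4\mid q\pm 1$ conditions; these are exactly the orders of the relevant tori and the unipotent cuspidal block of $\U_3(\mathbb{F}_q)$. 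The new cuspidal factors are then identified with specific $\I_\kappa(\sigma)$ from the exhaustive list in Theorem \ref{plcusps} by matching inertial support via $\R_\kappa$.

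Next I would determine the submodule structure case by case. The trivial representation $1_G$ embeds in $i^G_B(\delta_B^{-\frac{1}{2}})$ since its space contains the constant function on $B\backslash G$, and dually $\St_G$ is a quotient when no further reduction occurs, giving case (1). Case (2) is immediate from Lemma \ref{semisimple}. For case (3) I would apply $\R_\kappa$ to the filtration $1_G\subseteq i^G_B(\delta_B^{-\frac{1}{2}})$ and show that the new cuspidal factor sits strictly between $1_G$ and $\St_G$ in the composition series, with the quotient by $1_G$ remaining non-split because Lemma \ref{lemma611} does not apply. Case (4) uses Lemma \ref{lemma611} to pin $1_G$ as both socle and cosocle and then analyses the cuspidal middle via the relevant affine Hecke algebra whose parameter $q$ becomes $1$ modulo $\ell$, producing both the multiplicity-two factor and a non-split self-extension. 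Case (5), with $\ell=2$ and $4\mid q-1$, instead uses that the two relevant $\ell$-adic principal series characters of $J/J^1$ reduce to distinct modular characters, so the cuspidal middle is semisimple.

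The main obstacle is case (4): simultaneously establishing the multiplicity two of one cuspidal composition factor and the failure of semisimplicity of the maximal cuspidal subquotient. I expect to handle both via an explicit computation in the Hecke algebra $\End_G(\ind_J^G(\kappa\otimes\sigma))$ associated to the relevant type, where the rank one affine Hecke algebra degenerates at $\ell\mid q+1$ to a non-semisimple algebra whose module category transports the required non-split extensions back to $\mathfrak{R}_R(G)$. The $\ell=2$ bifurcation between cases (4) and (5) reduces to tracking which of the quadratic characters of $J/J^1$ trivialise modulo $\ell$, governed by the $2$-adic valuations of $q\pm 1$.
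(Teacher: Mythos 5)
Your overall plan is close in spirit to the paper's: you identify the constant functions giving $1_G\hookrightarrow i^G_B(\delta_B^{-\frac12})$, invoke Lemma \ref{semisimple} for case (2), Lemma \ref{lemma611} for the socle/cosocle identification when $\ell\mid q+1$, and propose to detect cuspidal subquotients with the right adjoints $\R_\kappa$ against the exhaustive list of Theorem \ref{plcusps}. All of that is sound and is used in the paper. Two points, however, need to be corrected.

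First, the heart of the paper's argument is Lemma \ref{lemma65}, which you do not mention and which your proposal does not replace. That lemma shows that if $\Sigma_w$ denotes a maximal cuspidal subquotient of $\R_{\Lambda_w}(i^G_B(\delta_B^{-\frac12}))$ for $w\in\{x,y\}$, then $\I_{\Lambda_w}(\Sigma_w)$ is a \emph{subrepresentation} of $\St_G$, giving the exact sequence
\begin{equation*}
0\longrightarrow \I_{\Lambda_x}(\Sigma_x)\oplus\I_{\Lambda_y}(\Sigma_y)\longrightarrow \St_G\longrightarrow\nu_G\longrightarrow 0.
\end{equation*}
Combined with exactness of $\I_{\Lambda_w}$ and the fact that $\R_{\Lambda_w}\circ\I_{\Lambda_w}$ is the identity on cuspidals (Lemma \ref{lemma111}), this transfers the composition series of $\Sigma_w$ (computed for the finite groups in Section \ref{cuspU11U21}) verbatim into $\St_G$. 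In particular, the multiplicity two and the non-split self-extension in case (4) come directly from the uniserial length-three structure of the maximal cuspidal subquotient of $i_B^G(\overline{1})$ for $\U(2,1)(k_F/k_0)$; no new computation is needed at the $p$-adic level. Without something like Lemma \ref{lemma65} you cannot control where the cuspidal factors sit in the lattice of subquotients, nor conclude that the various $\I_{\Lambda_w}$-contributions split off from one another.

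Second, your fallback for case (4) --- analysing the degeneration of the rank-one affine Hecke algebra $\End_G(\ind_J^G(\kappa\otimes\sigma))$ at $\ell\mid q+1$ --- cannot see the cuspidal subquotients at all. The cuspidal composition factors are of the form $\I_{\Lambda_w}(\sigma)$ with $\sigma$ cuspidal for $\M(\Lambda_w)$, so they have no nonzero $J^1$-isotypic component for the Borel cover $(J,\lambda)$ (equivalently, no invariants under the pro-unipotent radical of the Iwahori in the level zero case); they do not contain the type $(J,\lambda)$ and therefore contribute nothing to $\mathcal{M}(G,\lambda)$. The Hecke algebra only detects $1_G$ and $\nu_G$ here. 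This is precisely why the paper routes the cuspidal multiplicity and extension data through the finite reductive groups via Theorem \ref{SZadapt} and Lemma \ref{lemma65} rather than through the Hecke algebra of the cover. A smaller slip: you list $q-1$ among the primes dividing which $\St_G$ ``reduces non-trivially''; in fact when $\ell\neq 2$ and $\ell\mid q-1$ the length is still two, so $\St_G$ stays irreducible --- only the extension degenerates from non-split to split (which the paper deduces from second adjunction, $\Hom_G(i^G_B(\delta_B^{-\frac12}),1_G)\simeq\Hom_T(\delta_B^{-\frac12},1_T)$, and the triviality of $\delta_B$ when $\ell\mid q-1$).
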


\subsubsection*{Acknowledgements} 
Part of this work was included in my doctoral thesis.  I would like to thank my supervisor, Shaun Stevens, for his support and encouragement both during my doctorate and after.  I would like to thank Vincent S\'echerre and Alberto M\'inguez for many useful conversations.  Particularly, I would like to thank Vincent S\'echerre for some technical suggestions regarding $\kappa$-induction and $\kappa$-restriction. 

\section{Notation}

\subsection{Unramified unitary groups} Let $F_0$ be a non-archimedean local field of odd residual characteristic.  Let $F$ be an unramified quadratic extension of $F_0$ and $\overline{\phantom{w}}$ a generator of $\Gal(F/F_0)$.  If $D$ is a non-archimedean local field, we let $\mathfrak{o}_D$ denote the ring of integers of $D$, $\mathfrak{p}_D$ denote the unique maximal ideal of $\mathfrak{o}_D$, and $k_D=\mathfrak{o}_D/\mathfrak{p}_D$ denote the residue field. We let $\mathfrak{o}_0=\mathfrak{o}_{F_0}$, $\mathfrak{p}_0=\mathfrak{p}_{F_0}$, and $k_0=k_{F_0}$.  We fix a choice of uniformiser $\varpi_F$ of $F_0$.

 Let $V$ be a finite dimensional $F$-vector space and $h:V\times V\rightarrow F$ a hermitian form on $V$, that is a nondegenerate form which is linear in the first variable, $\overline{\phantom{w}}$--linear in the second variable and such that, for all $v_1,v_2\in V$, $h(v_1,v_2)=\overline{h(v_2,v_1)}$.  The \emph{unitary group} $\U(V,h)$ is the subgroup of isometries of $\GL(V)$, i.e. $\U(V,h)=\{g\in\GL(V):h(gv_1,gv_2)=h(v_1,v_2),~v_1,v_2\in V\}$.  The form $h$ induces an anti-involution on $\End_F(V)$ which we denote by $(\phantom{w})^\sigma$.
 
 \subsection{Parahoric subgroups}
An \emph{$\mathfrak{o}_F$-lattice} in $V$ is a compact open $\mathfrak{o}_F$-submodule of $V$.  Let $L$ be an $\mathfrak{o}_F$-lattice in $V$ and let $\Latt V=\{\mathfrak{o}_F\text{-lattices in }V\}$.  The $\mathfrak{o}_F$-lattice $L^\sharp=\{v\in V:h(v,L)\subseteq \mathfrak{p}_F\}$, defined relative to $h$, is called the dual lattice of $L$.   Let $A=\End_F(V)$ and $\mathfrak{g}=\{X\in A: X+X^{\sigma}=0\}$.  
An \emph{$\mathfrak{o}_F$-lattice sequence} is a function $\Lambda:\mathbb{Z}\rightarrow \Latt V$ which is decreasing and periodic. Let $\Lambda$ be an $\mathfrak{o}_F$-lattice sequence.  The \emph{dual} $\mathfrak{o}_F$-lattice sequence $\Lambda^\sharp$ of $\Lambda$ is the $\mathfrak{o}_F$-lattice sequence defined by, for all $n\in\mathbb{Z}$, $\Lambda^\sharp(n)=(\Lambda(-n))^\sharp$.  We call $\Lambda$ \emph{self dual} if there exists $k\in\mathbb{Z}$ such that $\Lambda(n)=\Lambda^\sharp(n+k)$,  for all $n\in\mathbb{Z}$.  If $\Lambda$ is self dual then we can always consider a translate $\Lambda_k$ of $\Lambda$ such that either $\Lambda_k(0)=\Lambda_k^{\sharp}(0)$ or $\Lambda_k(1)=\Lambda_k^{\sharp}(0)$.

Let $\Lambda$ be an $\mathfrak{o}_F$-lattice sequence in $V$.  For $n\in\mathbb{Z}$ define
$$\mathfrak{P}_n(\Lambda)=\{x\in A:x\Lambda(m)\subset \Lambda(m+n), \text{ for all }m\in\mathbb{Z}\},$$
which is an $\mathfrak{o}_F$-lattice in $A$.  We let $\mathfrak{P}^{-}_n(\Lambda)=\mathfrak{P}_n(\Lambda)\cap \mathfrak{g}$.  

If $\Lambda$ is self dual then the groups $\mathfrak{P}_n(\Lambda)$ are stable under the anti--involution which $h$ induces on $A$.  In this case, define compact open subgroups of $G$ called \emph{parahoric subgroups}, by
\[\P(\Lambda)=\mathfrak{P}_0(\Lambda)^\times \cap G;\]
and
\[\P_m(\Lambda)=(1+\mathfrak{P}_m(\Lambda))\cap G, ~m\in\mathbb{N}.\]
The pro-unipotent radical of $\P(\Lambda)$ is isomorphic to $\P_1(\Lambda)$. The sequence $(\P_m(\Lambda))_{m\in\mathbb{N}}$ is a fundamental system of neighbourhoods of the identity in $G$ and forms a decreasing filtration of $\P(\Lambda)$ by normal compact open subgroups. The quotient $\M(\Lambda)=\P(\Lambda)/\P_1(\Lambda)$ is the $k_0$-points of a connected reductive group defined over $k_0$.

Let $\P_1=\P(\Lambda_1)$ and $\P_2=\P(\Lambda_2)$ be parahoric subgroups of $G$.  Fix a set of \emph{distinguished} double coset representatives $D_{2,1}$ for $\P_2\backslash G/\P_1$, as in Morris \cite[\S 3.10]{Morris93}.  Let $n\in D_{2,1}$ then 
\begin{align*}P_{\Lambda_1,n\Lambda_2}=\P_1^1(\P_1\cap \P_2^n)/\P_1^1\end{align*}
is a parabolic subgroup of $\M_1=\P_1/\P_1^1$, by Morris \cite[Corollary 3.20]{Morris93}.  Furthermore, the pro-$p$ unipotent radical of $\P_1^1(\P_1\cap \P_2^n)$ is $\P_1^1(\P_1\cap (\P_2^n)^1)$, by Morris \cite[Lemma 3.21]{Morris93}.  If $D_{2,1}$ is a set of distinguished double coset representatives for $\P_2\backslash G/\P_1$ then $D_{2,1}^{-1}$ is a set of distinguished double coset representatives for $\P_1\backslash G/\P_2$.  Hence
\begin{align*}
P_{\Lambda_2,n^{-1}\Lambda_1}=\P_2^1(\P_2\cap \presuper{n}\P_1)/\P_2^1
\end{align*}
is a parabolic subgroup of $\M_2=\P_2/\P_2^1$.  Furthermore, the pro-$p$ unipotent radical of $\P_2^1(\P_2\cap \presuper{n}\P_1)$ is $\P_2^1(\P_2\cap \presuper{n}\P_1^1)$.  

\subsection{$\U(2,1)(F/F_0)$}
Let $x_{i}\in F$ for $i=1,2\ldots, n$.  Denote by $\diag(x_1,x_2,\ldots, x_n)$ the $n$ by $n$ diagonal matrix with entries $x_i$ on the diagonal and by $\adiag(x_1,x_2,\ldots,x_n)$ the $n$ by $n$ matrix  $(a_{i,j})$ such that $a_{m,n+1-m}=x_{n+1-m}$ and all other entries are zero.

Let $V$ be a three dimensional $F$-vector space with standard basis $\{e_{-1},e_0,e_1\}$ and $h:V\times V\rightarrow F$ be the non-degenerate hermitian form on $V$ defined by, let $v,u\in V$,
\begin{align*}h(v,w)=v_{-1}\overline{u_1}+v_0\overline{u_0}+v_1\overline{u_{-1}}\end{align*}
if $v=(v_{-1},v_0,v_1)$ and $w=(u_{-1},u_0,u_1)$ with respect to the standard basis $\{e_{-1},e_0,e_1\}$.  Let $\U(2,1)(F/F_0)$ denote the unitary group attached to the hermitian space $(V,h)$, i.e.
\begin{align*}
\U(2,1)(F/F_0)=\{g\in \GL_3(F): gJ\overline{g}^{T}J=1\} 
\end{align*}
where $J=\adiag(1,1,1)$ is the matrix of the form $h$.  We let $\U(1,1)(F/F_0)$ and $\U(2)(F/F_0)$ denote the two dimensional unitary groups defined by the forms with associated matrices are $\adiag(1,1)$ and $\diag(1,\varpi_F)$ respectively. Let $\U(1)(F/F_0)=\{g\in F^\times: g\overline{g}=1\}$ and occasionally, for brevity, let $F^1=\U(1)(F/F_0)$.  We use analogous notation for unitary groups defined over extensions of $F_0$ and defined over finite fields.

Let $B$ be the standard Borel subgroup of $\U(2,1)(F/F_0)$  with Levi decomposition $B=T\ltimes N$ where $T=\{\diag(x,y,\overline{x}^{-1}):x\in F^\times, y\in F^1\}$ and 
\begin{align*}
N=\left\{\begin{pmatrix} 1&x&y\\0&1&\overline{x}\\0&0&1\end{pmatrix}:x,y\in F, y+\overline{y}=x\overline{x}\right\}.
\end{align*} 
The maximal $F_0$-split torus $T_0$ contained in $T$ is $T_0=\{\diag(x,1,x^{-1}):x\in F_0\}$.  The subgroup of $T$ generated by all of its compact subgroups is $T^0=\{\diag(x,y,\overline{x}^{-1}):x\in\mathfrak{o}_F^\times,y\in F^1\}$.  Let $T^1=T^0\cap \diag(1+\mathfrak{p}_F,1+\mathfrak{p}_F,1+\mathfrak{p}_F)$.
 
Let $\Lambda_I$ be the $\mathfrak{o}_F$-lattice sequence of period three given by $\Lambda_I(0)=\mathfrak{o}_F\oplus\mathfrak{o}_F\oplus\mathfrak{o}_F$, $\Lambda_I(1)=\mathfrak{o}_F\oplus\mathfrak{o}_F\oplus\mathfrak{p}_F$ and $\Lambda_I(2)=\mathfrak{o}_F\oplus\mathfrak{p}_F\oplus\mathfrak{p}_F$ with respect to the standard basis.  The (standard) Iwahori subgroup of $G$ is the parahoric subgroup
\begin{align*}\P(\Lambda_I)=\begin{pmatrix}\mathfrak{o}_F&\mathfrak{o}_F&\mathfrak{o}_F\\\mathfrak{p}_F&\mathfrak{o}_F&\mathfrak{o}_F\\\mathfrak{p}_F&\mathfrak{p}_F&\mathfrak{p}_F\end{pmatrix}\cap G.\end{align*} 
There are two parahoric subgroups of $G$ which contain $\P(\Lambda_I)$, both of which are maximal.  These correspond to the lattice sequences $\Lambda_x$ of period one with $\Lambda_x(0)=\mathfrak{o}_F\oplus\mathfrak{o}_F\oplus\mathfrak{o}_F$; and $\Lambda_y$ of period two with $\Lambda_y(0)=\mathfrak{o}_F\oplus\mathfrak{o}_F\oplus\mathfrak{p}_F$ and $\Lambda_y(1)=\mathfrak{o}_F\oplus\mathfrak{p}_F\oplus\mathfrak{p}_F$.  We have $\M(\Lambda_x)\simeq \U(2,1)(k_F/k_0)$, $\M(\Lambda_y)\simeq \U(1,1)(k_F/k_0)\times k_F^1$ and $\M(\Lambda_I)\simeq k_F^{\times}\times k_F^1$.  Furthermore, $\M(\Lambda_I)$ is a maximal torus in $\M(\Lambda_x)$ (resp. $\M(\Lambda_y)$) and $\P(\Lambda_I)$ is equal to the preimage in $\P(\Lambda_x)$ (resp. $\P(\Lambda_y)$) of a Borel subgroup $B_x$ (resp. $B_y$), which we call \emph{standard}, under the projection map $\P(\Lambda_x)\rightarrow \M(\Lambda_x)$ (resp. $\P(\Lambda_y)\rightarrow \M(\Lambda_y)$). 

The \emph{affine Weyl group} $\widetilde{W}=N_G(T)/T^0$ of $\U(2,1)(F/F_0)$ is an infinite dihedral group generated by the cosets represented by the elements $w_x=\adiag(1,1,1)$ and $w_y=\adiag(\varpi_F,1,\varpi_F^{-1})$. Furthermore, we have $\P(\Lambda_x)=\P(\Lambda_I)\cup \P(\Lambda_I)w_x\P(\Lambda_I)$ and $\P(\Lambda_y)=\P(\Lambda_I)\cup \P(\Lambda_I)w_y\P(\Lambda_I)$.
\subsection{Reduction modulo-$\ell$}
 Let $\Ql$ be an algebraic closure of the $\ell$-adic numbers, $\Zl$ be the ring of integers of $\Ql$, $\Gamma$ be the unique maximal ideal of $\Zl$, and $\Fl=\Zl/\Gamma$ be the residue field which is an algebraic closure of the finite field with $\ell$-elements.  Let $\mathfrak{Gr}_R(G)$ denote the \emph{Grothendieck group} of $R$-representations, i.e. the free abelian group with $\mathbb{Z}$-basis $\Irr_R(G)$.  A representation in $ \mathfrak{R}_{\Ql}(G)$ will be called \emph{$\ell$-adic} and a representation in $\mathfrak{R}_{\Fl}(G)$ will be called \emph{$\ell$-modular}.  We say $\ell$ is \emph{banal} for $G$ if it does not divide the pro-order of any compact open subgroup of $G$.  

Let $(\pi,\mathcal{V})$ be a finite length $\ell$-adic representation of $G$.  We call $\pi$ \emph{integral} if $\pi$ stabilises a $\Zl$-lattice $\mathcal{L}$ in $\mathcal{V}$.  In this case $\pi$ stabilises $\Gamma\mathcal{L}$ and $\pi$ induces a finite length $\ell$-modular representation on the space $\mathcal{L}/\Gamma\mathcal{L}$.  In general, this depends on the choice of the lattice $\mathcal{L}$.  However, due to Vign\'eras \cite[Theorem 1]{Vigint}, the semisimplification of $\mathcal{L}/\Gamma\mathcal{L}$ is independent of the lattice chosen and we define $r_{\ell}(\pi)$, the \emph{reduction modulo-$\ell$} of $\pi$, to be this semisimple $\ell$-modular representation.  If $\pi$ is a finite length $R$-representation of $G$ we write $[\pi]$ for the semisimplification of $\pi$ in $\mathfrak{Gr}_R(G)$.  

We fix choices of square roots of $p$ in $\Ql^\times$ and $\Fl^\times$ such that our chosen square root of $p$ in $\Fl^\times$ is the reduction modulo-$\ell$ of our chosen square root of $p$ in $\Ql^\times$ and make use of these choices in our definitions of normalised parabolic induction and the Jacquet module.

Parabolic induction preserves integrality and commutes with reduction modulo-$\ell$; if $P=M\ltimes N$ is a parabolic subgroup of $G$ and $\sigma$ is a finite length integral $\ell$-adic representation of $M$ then $r_{\ell}(i_P^G(\sigma))\simeq \left[i_P^G(r_{\ell}(\sigma))\right].$ Furthermore, compact induction commutes with reduction modulo-$\ell$; if $H$ is a closed subgroup of $G$, $\sigma$ an integral finite length representation of $H$, such that $\ind_H^G(\sigma)$ is finite length, then $r_{\ell}(\ind_H^G(\sigma))=[\ind_H^G(r_{\ell}(\sigma))]$.  For classical groups, due to Dat \cite{Dat}, the Jacquet module preserves integrality and commutes with reduction modulo-$\ell$; if $P=M\ltimes N$ is a parabolic subgroup of $G$ and $\pi$ is a finite length integral $\ell$-adic representation of $G$ then $
r_{\ell}(r^G_P(\pi))\simeq \left[r^G_P(r_{\ell}(\pi))\right].$ This implies that the reduction modulo-$\ell$ of a finite length cuspidal integral $\ell$-adic representation is cuspidal.  

An irreducible $R$-representation is admissible, due to Vign\'eras \cite[II 2.8]{Vig96}.  If $\pi$ is an $R$-representation we let $\widetilde{\pi}$ or $\pi^{\sim}$ denote the contragredient representation of $\pi$.

The abelian category $\mathfrak{R}_R(G)$ has a decomposition as a direct product of full subcategories $\mathfrak{R}_R^x(G)$, consisting of all representations all of whose irreducible subquotients have level $x$ for $x\in\mathbb{Q}_{\geqslant 0}$, which is preserved by parabolic induction and the Jacquet functor, by Vign\'eras \cite[II 5.8 \& 5.12]{Vig96}.

\section{Cuspidal representations of $\U(1,1)(k_F/k_0)$ and $\U(2,1)(k_F/k_0)$}\label{cuspU11U21}

 Our description of the supercuspidal $\ell$-adic representations of $\U(1,1)(k_F/k_0)$ and $\U(2,1)(k_F/k_0)$ and the decomposition of the $\ell$-adic principal series follow from similar arguments made for $\GL_2(k_F)$ and $\SL_2(k_F)$ by Digne--Michel \cite[\S 15.9]{DM91}.  The character tables of both groups were first computed by Ennola \cite{ennola} and the $\ell$-modular representations of $\U(2,1)(k_F/k_0)$ were first studied by Geck \cite{G90}.  In this section, let $H=\U(1,1)(k_F/k_0)$ and $G=\U(2,1)(k_F/k_0)$.  We call a torus minisotropic if it is equal to the points of a minisotropic torus of the corresponding algebraic group.
 
\subsection{Cuspidals of $\U(1,1)(k_F/k_0)$} 

\subsubsection{Cuspidals}
There are $\frac{q^2+q}{2}$ irreducible $\ell$-adic supercuspidal representations of $H$.  These can be parametrised by the regular irreducible characters of the minisotropic tori of $H$. There is only one conjugacy class of minisotropic tori in $G$, which is isomorphic to $k_F^1\times k_F^1$, hence a character of this torus corresponds to two characters of $k_F^1$.  Furthermore, this character is regular if and only if it corresponds to two distinct characters of $k_F^1$.  Thus the $\ell$-adic supercuspidals can be parametrised by unordered pairs of distinct irreducible characters of $k_F^1$.  Let $\chi_1,\chi_2$ be distinct $\ell$-adic characters of $k_F^1$. Let $\sigma(\chi_1,\chi_2)$ denote the $\ell$-adic supercuspidal representation parametrised by the set $\{\chi_1,\chi_2\}$. 

Using Clifford Theory the decomposition numbers for $H$ follow from the well known decomposition numbers of $\SU(1,1)(k_F/k_0)\simeq \SL_2(k_0)$.  We have $|H|=q(q-1)(q+1)$ hence, because $q$ is odd, there are four cases to consider: $\ell\mid q-1$, $\ell\mid q+1$, $\ell=2$, and $\ell$ is prime to $(q^2-1)$. 

All irreducible $\ell$-modular cuspidal representations of $H$ are isomorphic to the reduction modulo-$\ell$ of an irreducible $\ell$-adic supercuspidal representation. If $\chi$ is an $\ell$-adic character we let $\overline{\chi}$ denote its reduction modulo-$\ell$.  If $\chi_1',\chi_2'$ are $\ell$-adic characters of $k_F^1$, we have $r_{\ell}(\sigma(\chi_1,\chi_2))=r_{\ell}(\sigma(\chi_1',\chi_2'))$ if and only if $\{\overline{\chi}_1,\overline{\chi}_2\}=\{\overline{\chi}_1',\overline{\chi}'_2\}$.  We let $\overline{\sigma}(\overline{\chi}_1,\overline{\chi}_2)=r_{\ell}(\sigma(\chi_1,\chi_2))$. Furthermore, $\overline{\sigma}(\overline{\chi}_1,\overline{\chi}_2)$ is supercuspidal if and only if $|\{\overline{\chi}_1,\overline{\chi}_2\}|=2$ and we have $\overline{\sigma}(\overline{\chi}_1,\overline{\chi}_2)=\overline{\sigma}(\overline{\chi}_2,\overline{\chi_1})$.  Hence the irreducible cuspidal non-supercuspidal $\ell$-modular representations of $H$ are parametrised by the $\ell$-modular characters of $k_F^1$ and, if $\overline{\chi}$ is an $\ell$-modular character of $k_F^1$ equal to the reduction modulo-$\ell$ of two distinct $\ell$-adic characters of $k_F^1$, we let  $\overline{\sigma}(\overline{\chi})=\overline{\sigma}(\overline{\chi},\overline{\chi})$.  When $\ell\nmid q+1$ all irreducible cuspidal $\ell$-modular representations are supercuspidal.  

\subsubsection{Cuspidal non--supercuspidals when $\ell\mid q+1$}
Let $\ell^a\mid\mid q+1$, then there are $\frac{q+1}{\ell^a}$ cuspidal non--supercuspidal $\ell$-modular representations denoted by $\overline{\sigma}(\overline{\chi})$; these occur as the reduction modulo $\ell$ of $\sigma(\chi_1,\chi_2)$ when $\overline{\chi}=\overline{\chi}_1=\overline{\chi}_2$.  Let $T=\{\diag(x,\overline{x}^{-1}):x\in k_F\}$ be the maximal diagonal torus of $H$ and $B_H$ the standard Borel subgroup containing $T$.   The principal series representations  $i^H_{B_H}(\overline{\chi}\circ\xi)\simeq i^H_{B_H}(\overline{1})(\overline{\chi}\circ\det)$ are uniserial of length three with $(\overline{\chi}\circ\det)$ appearing as the unique irreducible subrepresentation and the unique irreducible quotient and unique irreducible cuspidal subquotient $\overline{\sigma}(\overline{\chi})$.

\subsection{Cuspidals of $\U(2,1)(k_F/k_0)$}
\subsubsection{$\ell$-adic supercuspidals}There are two conjugacy classes of minisotropic tori in $G$ which give rise to two classes of irreducible supercuspidal $\ell$-adic representations coming from regular irreducible characters of these tori.  Let $E$ be an unramified cubic extension of $F$.   One conjugacy class of the minisotropic tori has representatives isomorphic to $k_F^1\times  k_F^1\times k_F^1$; the other conjugacy class has representatives isomorphic to $k_E^1$.  However, in contrast to $H$, the irreducible representations parametrised by the irreducible regular characters of these tori do not constitute all the irreducible supercuspidal representations of $G$; additionally there exist unipotent supercuspidal representations of $G$.   Thus we have three classes of $\ell$-adic supercuspidals:
\begin{enumerate}
\item There are $\frac{(q+1)q(q-1)}{6}$ $\ell$-adic supercuspidals of dimension $(q-1)(q^2-q+1)$ parametrised by the irreducible regular characters of $k_F^1\times k_F^1\times k_F^1$.  An irreducible $\ell$-adic character of $k_F^1\times k_F^1\times k_F^1$ is of the form $\chi_{1}\otimes\chi_2\otimes\chi_3$, with $\chi_1,\chi_2,\chi_3$ irreducible $\ell$-adic characters of $k_F^1$, and is regular if and only if $|\{\chi_1,\chi_2,\chi_3\}|=3$.  We let $\sigma(\chi_1,\chi_2,\chi_3)$ denote the $\ell$-adic supercuspidal corresponding to the set $\{\chi_1,\chi_2,\chi_3\}$.
\item There are $\frac{(q+1)q(q-1)}{3}$ $\ell$-adic supercuspidals of dimension $(q-1)(q+1)^2$ parametrised by the irreducible regular characters of $k_E^1$.  An irreducible $\ell$-adic character $\psi$ of $k_E^1$ is regular if and only if $\psi^{q+1}\neq 1$.  We let $\tau(\psi)$ denote the $\ell$-adic supercuspidal representation corresponding to $\psi$.
\item There are $(q+1)$ unipotent $\ell$-adic supercuspidals of dimension $q(q-1)$.  These can be parametrised by the irreducible characters of $k_F^1$.  We write $\nu(\chi)$ for the unipotent $\ell$-adic supercuspidal representation corresponding to the irreducible $\ell$-adic character $\chi$ of $k_F^1$.
\end{enumerate}

\subsubsection{$\ell$-modular cuspidals}\label{u3finitecusplmod}
We have $|G|=q^3(q - 1)(q + 1)^3(q^2 - q + 1)$ hence there are six cases to consider:  $\ell=2$, $\ell=3$ and $\ell\mid q+1$, $\ell\mid q-1$, $\ell\mid q+1$, $\ell\mid q^2-q+1$, and $\ell$ is prime to $(q - 1)(q + 1)(q^2 - q + 1)$. When $\ell\neq 2$, the decomposition numbers can be obtained from Geck \cite{G90} and Okuyama--Waki \cite{OWU} using Clifford theory. Parabolic induction of the trivial character is completely described in Hiss \cite[Theorem $4.1$]{HissU3}. When $\ell\mid q-1$ or $\ell\mid q+1$, all irreducible cuspidal $\ell$-modular representations lift to irreducible cuspidal $\ell$-adic representations.  Analogously to the two dimensional case, we write $\overline{\nu}(\overline{\chi})=r_{\ell}(\nu(\chi))$, $\overline{\tau}(\overline{\psi})=r_{\ell}(\tau(\psi))$ and $\overline{\sigma}(\overline{\chi}_1,\overline{\chi}_2,\overline{\chi}_3)=r_{\ell}(\sigma(\chi_1,\chi_2,\chi_3))$.  

When $\ell\neq 3$ and $\ell\mid q^2-q+1$ we have irreducible $\ell$-modular cuspidal representations which do not lift: if $\psi$ is an $\ell$-adic character of $k_E^1$ such that $\psi^{q+1}\neq 1$, but $\overline{\psi}^{q+1}=\overline{1}$ then $r_{\ell}(\tau(\psi))=\overline{\nu}(\overline{\chi})\oplus\overline{\tau}^+(\overline{\chi})$, where $\overline{\chi}$ is the character of $k_F^1$ such that $\overline{\psi}=\overline{\chi}\circ\xi$ where $\xi(x)=x^{q-1}$, and $\overline{\tau}^+(\overline{\chi})$ does not lift.   When $\ell=2$ and $4\mid q-1$ we also have cuspidal representations which do not lift: if $\psi$ is an $\ell$-adic character of $k_E^1$ such that $\psi^{q+1}\neq 1$, but $\overline{\psi}^{q+1}=\overline{1}$ then $r_{\ell}(\tau(\psi))=\overline{\nu}(\overline{\chi})\oplus\overline{\nu}(\overline{\chi})\oplus\overline{\tau}^+(\overline{\chi})$, where $\overline{\chi}$ is the character of $k_F^1$ such that $\overline{\psi}=\overline{\chi}\circ\xi$ where $\xi(x)=x^{q-1}$, and $\overline{\tau}^+(\overline{\chi})$ does not lift.  All other irreducible cuspidal $\ell$-modular representations of $G$ lift to $\ell$-adic representations and we use the same notation as before. 

\subsubsection{$\ell$-adic principal series}

Let $T=\{\diag(x,y,\overline{x}^{-1}):x\in k_F,~y\in k_F^1\}$ be the maximal diagonal torus in $G$, and $B$ be the standard Borel subgroup of $G$ containing $T$.

Let $\chi_1$ be an $\ell$-adic character of $k_F^\times$ and $\chi_2$ an $\ell$-adic character of $k_F^1$.  Let $\chi$ be the irreducible character of $T$ defined by $\chi(\diag(x,y,x^{-q}))=\chi_1(x)\chi_2(xyx^{-q})$.   The character $\chi$ is regular if and only if $\chi_1^{q+1}\neq 1$ and in this case the principal series representation ${i}_{B}^{G}(\chi)$ is irreducible.  

If $\chi_1^{q+1}=1$ then $\chi_1=\chi'_1\circ\xi$ where $\xi(x)=x^{q-1}$ and $\chi'_1$ is an $\ell$-adic character of $k_F^1$.  If $\chi'_1=1$, or equivalently $\chi_1=1$, then
\begin{align*}
{i}_{B}^{G}(\chi)=1_{G}(\chi_2\circ\det)\oplus \St_G(\chi_2\circ\det)
\end{align*}
where $\St_G$ is an irreducible $q^3$-dimensional representation of $G$.  If $\chi'_1\neq 1$ then
\begin{align*}
{i}_{B}^{G}(\chi)=R_{1_H(\chi'_1)}(\chi_2\circ\det)\oplus R_{\St_H(\chi'_1)}(\chi_2\circ\det)
\end{align*}
where $R_{1_H(\chi'_1)}$ is an irreducible $(q^2-q+1)$-dimensional representation of $G$ and $R_{\St_H(\chi'_1)}$ is an irreducible $(q(q^2-q+1))$-dimensional representation of $G$.  The reducibility here comes from inducing first to the Levi subgroup $L^*=\U(1,1)(k_F/k_0)\times\U(1)(k_F/k_0)$ which is not contained in any proper rational parabolic subgroup of $G$.  Here $1_H$ and $\St_H$ denote the trivial and Steinberg representations of $\U(1,1)(k_F/k_0)$ and $R$ a generalised induction from $L^*$ to $G$.

\subsubsection{Cuspidal subquotients of $\ell$-modular principal series}

If  $\ell\neq 2$ and $\ell\mid q-1$ or $\ell$ is prime to $(q-1)(q+1)(q^2-q+1)$ then all irreducible cuspidal $\ell$-modular representations are supercuspidal and the principal series representations are all semisimple.

 Let $\overline{\chi}_2$ be an $\ell$-modular character of $k_F^1$.  We first describe the $\ell$-modular principal series representations ${i}_{B}^{G}(\overline{1})(\overline{\chi}_2\circ\det)$ in all the cases where cuspidal subquotients appear.
\begin{enumerate}
\item If $\ell\neq 3$ and $\ell\mid q^2-q+1$,  ${i}_{B}^{G}(\overline{1})(\overline{\chi}_2\circ\det)$ are uniserial of length three with $(\overline{\chi}\circ\det)$ appearing as the unique irreducible subrepresentation and the unique irreducible quotient and $\overline{\tau}^+(\overline{\chi})$ as the unique irreducible cuspidal subquotient.  
\item If $\ell\neq 2$ and $\ell\mid q+1$ or $\ell=2$ and $4\mid q+1$ then ${i}_{B}^{G}(\overline{1})(\overline{\chi}\circ\det)$ have irreducible cuspidal subquotients  $\overline{\nu}(\overline{\chi})$ and $\overline{\sigma}(\overline{\chi})=\overline{\sigma}(\overline{\chi},\overline{\chi},\overline{\chi})$.   The principal series representations ${i}_{B}^{G}(\overline{1})(\overline{\chi}\circ\det)$ are uniserial of length five with $(\overline{\chi}\circ\det)$ appearing as the unique irreducible subrepresentation and the unique irreducible quotient.  A maximal cuspidal subquotient of ${i}_{B}^{G}(\overline{1})(\overline{\chi}\circ\det)$ is uniserial of length three with $\overline{\nu}(\overline{\chi})$ appearing as the unique irreducible quotient and the unique irreducible subrepresentation, and remaining subquotient $\overline{\sigma}(\overline{\chi})$.

\item If $\ell=2$ and $4\mid q-1$ then ${i}_{B}^{G}(\overline{1})(\overline{\chi}\circ\det)$ has length four with $(\overline{\chi}\circ\det)$ appearing as the unique irreducible subrepresentation and the unique irreducible quotient, and cuspidal subquotient $\overline{\nu}(\overline{\chi})\oplus\overline{\tau}^{+}(\overline{\chi})$.  
\end{enumerate}

Now let $\overline{\chi}_1'$ and $\overline{\chi}_2$ be $\ell$-modular characters of $k_F^1$ with $\overline{\chi}_1'$ non-trivial and let $\overline{\chi}_1=\overline{\chi}_1'\circ \xi$.  Let $\overline{\chi}$ be the $\ell$-modular character of $T$ defined by $\overline{\chi}(\diag(x,y,x^{-q}))=\overline{\chi}_1(x)\overline{\chi}_2(xyx^{-q})$.  If $\ell\nmid q+1$ then $i^G_B(\overline{\chi})$ does not possess any cuspidal subquotients.
If $\ell\mid q+1$ then ${i}_{B}^{G}(\overline{\chi})$ is uniserial of length three with $ \overline{R}_{\overline{1}_H(\overline{\chi}'_1)}(\overline{\chi}_2\circ\det)$ appearing as the unique irreducible subrepresentation and the unique irreducible quotient and cuspidal subquotient $\sigma(\overline{\chi}'_1,\overline{\chi}'_1,\overline{\chi}_2)$.  This follows from Bonnaf\'e-Rouquier \cite[Theorem 11.8]{BR03} and the principal block of $H$ as $\chi$ corresponds to a semisimple element with centraliser $H\times k_F^1$ in the dual group. 

\section{Irreducible cuspidal $R$-representations of $\U(2,1)(F/F_0)$}\label{sect4}

\subsection{Types and Hecke algebras}
Let $G=\U(2,1)(F/F_0)$. We construct the irreducible cuspidal representations of $G$ by compact induction from the irreducible representations of its compact open subgroups.  We review some general theory first.  By an \emph{$R$-type}, we mean a pair $(K,\sigma)$ consisting of a compact open subgroup $K$ of $G$ and an irreducible $R$-representation $\sigma$ of $K$.  Given an $R$-type we consider the compactly induced representation $\ind_K^G(\sigma)$ of $G$; the goal being to find pairs $(K,\sigma)$ such that $\ind_K^G(\sigma)$ is irreducible and cuspidal. Let $\pi\in\Irr_R(G)$,  we say that $\pi$ \emph{contains} the $R$-type $(K,\sigma)$ if $\pi$ is a quotient of $\ind_K^G(\sigma)$. 

Let $(K,\sigma)$ be an $R$-type in $G$ and $\mathcal{W}$ be the space of $\sigma$.  The \emph{spherical Hecke algebra} $\mathcal{H}(G,\sigma)$ of $\sigma$ is the $R$-module consisting of the set of all functions $f:G\rightarrow \End_R(\mathcal{W})$ such that the support of $f$ is a finite union of double cosets in $K\backslash G/K$ and $f$ transforms by $\sigma$ on the left and the right, i.e. $f(k_1gk_2)=\sigma(k_1)f(g)\sigma(k_2)$, for all $k_1,k_2\in K$ and all $g\in G$. The product in $\mathcal{H}(G,\sigma)$ is given by convolution; if $f_1, f_2\in\mathcal{H}(G,\sigma)$ then
\begin{align*}
f_1\star f_2(h)=\sum_{G/K}f_1(g)f_2(g^{-1}h).
\end{align*}
The spherical Hecke algebra $\mathcal{H}(G,\sigma)$ is isomorphic to $\End_G(\ind_K^G(\sigma))$ where multiplication in $\End_G(\ind_K^G(\sigma))$ is defined by composition.  For $g\in G$, let $I_g(\sigma)=\Hom_K(\sigma,\ind_{K\cap K^g}^K\sigma^g)$ and let $I_G(\sigma)=\{g\in G:I_g(\sigma)\neq 0\}.$

Let $\mathcal{M}(G,\sigma)$ denote the category of right $\mathcal{H}(G,\sigma)$-modules. Define $M_{\sigma}:\mathfrak{R}_R(G)\rightarrow \mathcal{M}(G,\sigma)$ by $\pi\mapsto \Hom_G(\ind_K^G(\sigma),\pi)$ which is a (right) $\End_G(\ind_K^G(\sigma))$-module by pre-composition.  In the $\ell$-adic case, if $(K,\sigma)$ is a type in the sense of Bushnell--Kutzko \cite[Page $584$]{BK98}, $M_{\sigma}$ induces an equivalence of categories between the full subcategory of $\mathfrak{R}_R(G)$ of representations all of whose irreducible subquotients contain $(K,\sigma)$ and $\mathcal{M}(G,\sigma)$.

An $R$-representation $(\pi,\mathcal{V})$ of $G$ is called \emph{quasi-projective} if for all $R$-representations $(\sigma,\mathcal{W})$ of $G$, all surjective $\Phi\in\Hom_G(\mathcal{V},\mathcal{W})$ and all $\Psi\in\Hom_G(\mathcal{V},\mathcal{W})$ there exists $\Xi\in\End_G(\mathcal{V})$ such that $\Psi=\Phi\circ\Xi$.

\begin{thm}[{Vign\'eras \cite[Appendix, Theorem $10$]{vignerasselecta}}]\label{qplemma}
Let $\pi$ be a quasi-projective finitely generated $R$-representation of $G$.  The map $\rho\mapsto \Hom_G(\pi,\rho)$ induces a bijection between the irreducible quotients of $\pi$ and the simple right $\End_G(\pi)$-modules.
\end{thm}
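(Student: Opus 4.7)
Let $E := \End_G(\pi)$. The plan is to analyse the functor $F := \Hom_G(\pi, -)$ from $\mathfrak{R}_R(G)$ to the category of right $E$-modules, where $E$ acts on $F(\rho) = \Hom_G(\pi, \rho)$ by precomposition; this functor descends to a well-defined map on isomorphism classes, and we must show it restricts to a bijection between iso-classes of irreducible quotients of $\pi$ and iso-classes of simple right $E$-modules. First, we check \emph{simplicity} of $F(\rho)$ for $\rho$ an irreducible quotient: any nonzero $\phi \in F(\rho)$ is surjective by irreducibility of $\rho$, so quasi-projectivity applied with $\Phi = \phi$ and arbitrary $\Psi \in F(\rho)$ produces $\Xi \in E$ with $\Psi = \phi \circ \Xi = \phi \cdot \Xi$; thus $F(\rho) = \phi \cdot E$ is cyclic with arbitrary nonzero generator, so is simple.

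Next, we establish \emph{injectivity} on iso-classes. Given an $E$-isomorphism $f : F(\rho_1) \xrightarrow{\sim} F(\rho_2)$, pick a surjection $p_1 : \pi \twoheadrightarrow \rho_1$ and set $p_2 := f(p_1)$, which is surjective for the same reason. Since $f$ is $E$-equivariant and sends $p_1 \mapsto p_2$, the right annihilators $\operatorname{Ann}_E(p_1)$ and $\operatorname{Ann}_E(p_2)$ agree. Assuming $\rho_1 \not\cong \rho_2$, a Goursat argument via Schur's lemma shows $(p_1, p_2) : \pi \to \rho_1 \oplus \rho_2$ is surjective; applying quasi-projectivity to this surjection together with the map $(0, p_2)$ gives $\Xi \in E$ with $\Xi \in \operatorname{Ann}_E(p_1)$ and $\mathrm{id}_\pi - \Xi \in \operatorname{Ann}_E(p_2)$, so the annihilator equality forces $\mathrm{id}_\pi \in \operatorname{Ann}_E(p_1)$ and hence $p_1 = 0$, a contradiction. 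Thus $\rho_1 \cong \rho_2$.

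Finally, we tackle \emph{surjectivity}. For a simple right $E$-module $S \cong E/I$ with $I$ a maximal right ideal, consider the $G$-subrepresentation $\pi_I := \sum_{e \in I} e(\pi) \subseteq \pi$. When $\pi_I \subsetneq \pi$, Zorn's lemma together with the finite generation of $\pi$ produces a maximal proper subrepresentation $M \supseteq \pi_I$; the quotient $\rho := \pi/M$ is an irreducible quotient of $\pi$, the quotient map $q$ satisfies $\operatorname{Ann}_E(q) \supseteq I$, and the simultaneous maximality of $\operatorname{Ann}_E(q)$ (by the first step) and of $I$ forces equality, so $F(\rho) \cong E/I \cong S$. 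The main obstacle is therefore to show $\pi_I \subsetneq \pi$, equivalently $S \otimes_E \pi \neq 0$. We plan to argue by contradiction: if $\pi_I = \pi$, express each generator $v_i$ of $\pi$ as $v_i = \sum_j e_{ij}(w_{ij})$ with $e_{ij} \in I$, rewrite each $w_{ij}$ as an $R[G]$-linear combination of the generators, and extract a matrix identity $(\mathrm{id}_\pi - M)(v_1, \ldots, v_n)^T = 0$ whose entries of $M$ lie in the subalgebra of $\End_R(\pi)$ generated by the image of $R[G]$ and $I$; a non-commutative Nakayama-style argument should then force $\mathrm{id}_\pi$ into the right ideal of $E$ generated by $I$, contradicting the maximality of $I$. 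This verification of non-triviality of $\pi/\pi_I$, where the interplay between the $G$- and $E$-module structures on $\pi$ is most delicate, is the step we expect to be the main difficulty.
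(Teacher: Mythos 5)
The paper does not reproduce Vign\'eras's argument, so there is no ``paper proof'' to compare against; I will assess your proposal on its own terms.

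Your first two steps are correct. The simplicity of $\Hom_G(\pi,\rho)$ for an irreducible quotient $\rho$ is exactly the right use of quasi-projectivity, and the injectivity argument (Goursat to get surjectivity of $(p_1,p_2)$ when $\rho_1\not\cong\rho_2$, then quasi-projectivity to split off $\Xi$ and contradict the equality of annihilators) is sound. Your reduction of surjectivity to the assertion $\pi_I=I\pi\subsetneq\pi$ is also valid.

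The gap is the proof of $I\pi\neq\pi$, and your Nakayama-style sketch will not close it. After re-expressing each $w_{ij}$ as an $R[G]$-combination of the generators, the matrix $M$ you obtain has entries of the form $\sum_j a_{ijk}e_{ij}$ with $a_{ijk}$ in the image of $R[G]$ and $e_{ij}\in I$; these lie in the subalgebra of $\End_R(\pi)$ generated by $R[G]$ and $E$, \emph{not} in $E=\End_G(\pi)$. Since $I$ is a right ideal only of $E$, no determinant trick or Nakayama-type manipulation over this larger non-commutative algebra can land $\operatorname{id}_\pi$ in $I$. Worse, the sketch makes no use of quasi-projectivity at all, and the statement is false without it: for instance, over $A=R[x]/(x^2)$ with $R$ a field and $\pi=A\oplus (A/xA)$, which is finitely generated but not quasi-projective, $\pi$ has a unique irreducible quotient up to isomorphism while $\End_A(\pi)$ has two non-isomorphic simple right modules, and indeed $\Hom_A(\pi, A/xA)$ is two-dimensional and not simple. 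So any argument that ignores quasi-projectivity is doomed.

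Here is how to finish along your own lines. Prove the following by induction on $n$, using quasi-projectivity: if $\Phi_1,\dots,\Phi_n,\Psi\in E$ satisfy $\Psi(\pi)\subseteq\sum_i\Phi_i(\pi)$, then $\Psi\in\sum_i\Phi_iE$. For $n=1$, apply quasi-projectivity to the surjection $\Phi_1:\pi\twoheadrightarrow\Phi_1(\pi)$ and the map $\Psi:\pi\to\Phi_1(\pi)$. For the inductive step, set $W=\sum_{i<n}\Phi_i(\pi)$ and let $\bar\Phi_n,\bar\Psi:\pi\to\pi/W$ be the induced maps; since $\bar\Phi_n$ surjects onto $(\sum_i\Phi_i(\pi))/W\supseteq\bar\Psi(\pi)$, quasi-projectivity yields $\Xi_n\in E$ with $(\Psi-\Phi_n\Xi_n)(\pi)\subseteq W$, and the inductive hypothesis produces the remaining $\Xi_i$. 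Now if $I\pi=\pi$, finite generation of $\pi$ gives finitely many $e_1,\dots,e_n\in I$ with $\sum_ie_i(\pi)=\pi$; applying the claim with $\Psi=\operatorname{id}_\pi$ yields $\operatorname{id}_\pi\in\sum_ie_iE\subseteq I$, contradicting properness of $I$. This is precisely where quasi-projectivity does the work your sketch was missing, and it completes the surjectivity step.
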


Let $P$ be a parabolic subgroup of $G$ with Levi decomposition $P=M\ltimes N$.  Let $P^{op}$ be the opposite parabolic subgroup of $P$ with Levi decomposition $P^{op}=M\ltimes N^{op}$.  Let $K^+=K\cap N$ and $K^{-}=K\cap N^{op}$.  An element $z$ of the centre of $M$ is called \emph{strongly $(P,K)$ positive} if \begin{enumerate}
\item $zK^+z^{-1}\subset K^+$ and $zK^{-}z^{-1}\supset K^{-}$;
\item for all compact subgroups $H_1,H_2$ of $N$ (resp. $N^{op}$), there exists a positive (resp. negative) integer $m$ such that $z^mH_1z^{-m}\subset H_2$.
\end{enumerate}
Let $(K_M,\sigma_M)$ be an $R$-type of $M$.  An $R$-type $(K,\sigma)$ is called a \emph{$G$-cover} of $(K_M,\sigma_M)$ relative to $P$ if we have:
\begin{enumerate}
\item $K\cap M=K_M$ and we have an Iwahori decomposition $K=K^{-}K_M K^{+}$.
\item $\Res^K_{K_M}(\sigma)=\sigma_M$, $\Res^K_{K^+}(\sigma)$ and $\Res^K_{K^{-}}(\sigma)$ are both multiples of the trivial representation.
\item There exists a strongly $(P,K)$ positive element $z$ of the centre of $M$ such that the double coset $Kz^{-1}K$ supports an invertible element of $\mathcal{H}_R(G,\sigma)$.
\end{enumerate}

The point being that the properties of a $G$-cover allow one to define an injective homomorphism of algebras $j_P:\mathcal{H}(M,\sigma_M)\rightarrow \mathcal{H}(G,\sigma)$ and hence a (normalised) restriction functor $(j_P)^*:\mathcal{M}(G,\sigma)\rightarrow \mathcal{M}(M,\sigma_M)$, see Bushnell--Kutzko \cite[Page $585$]{BK98} and Vign\'eras \cite[II \S 10]{vignerasselecta}.   

\begin{thm}[{Vign\'eras \cite[II \S 10.1]{vignerasselecta}}]\label{jacquetcommutes}
Let $\pi$ be a finitely generated $\ell$-modular representation of $G$.  We have an isomorphism $(j_P)^*(M_\sigma(\pi))\simeq M_{\sigma_M}(r^G_P(\pi))$ of representations of $M$.
\end{thm}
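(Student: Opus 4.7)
The plan is to construct a natural map
$\alpha\colon M_\sigma(\pi)\to M_{\sigma_M}(r^G_P(\pi))$
induced by the Jacquet projection, verify it is $\mathcal{H}(M,\sigma_M)$-equivariant through $j_P$, and then prove it is bijective using the strongly positive central element $z$ from the $G$-cover axioms together with a modular form of Jacquet's lemma. By Frobenius reciprocity, $M_\sigma(\pi)\simeq\Hom_K(\sigma,\pi|_K)$ and $M_{\sigma_M}(r^G_P(\pi))\simeq\Hom_{K_M}(\sigma_M,r^G_P(\pi)|_{K_M})$. Since $\sigma$ is trivial on $K^+\subset N$, any $\phi\in\Hom_K(\sigma,\pi)$ has image in $\pi^{K^+}$; composing with the $M$-equivariant Jacquet projection $q\colon\pi\twoheadrightarrow r^G_P(\pi)$ produces a $K_M$-equivariant map $\sigma_M=\sigma|_{K_M}\to r^G_P(\pi)$, and this is the definition of $\alpha(\phi)$.

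To check $\alpha$ is equivariant for the two Hecke-algebra actions via $j_P$, it suffices to test against Hecke operators supported on double cosets $K_M m K_M$ for $m$ running over a generating subset of $M$ (for instance $K_M$ together with a strongly positive $z$ and its inverse). The explicit description of $j_P$ sends the basis element supported on $K_M m K_M$ to a modulus-character scalar times the basis element supported on $KmK$; using the Iwahori decomposition $K=K^{-}K_M K^{+}$ and the triviality of $\sigma$ on $K^{\pm}$, the convolution computing $\phi\cdot j_P([K_M m K_M])$ on $\pi$ reduces, after projecting through $q$, to the convolution computing $\alpha(\phi)\cdot [K_M m K_M]$ on $r^G_P(\pi)$.

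Bijectivity is the core issue and hinges on an $\ell$-modular form of Jacquet's lemma: the restriction of $q$ to $\pi^{K^+,K_M}$ surjects onto $r^G_P(\pi)^{K_M}$, and its kernel is annihilated by a sufficiently large power of $z$, where finite generation of $\pi$ ensures a single such power works uniformly. The $G$-cover axiom guarantees that $Kz^{-1}K$ supports an invertible Hecke element $T_{z^{-1}}$, whose action on $M_\sigma(\pi)$, transported via the inclusion $M_\sigma(\pi)\hookrightarrow\pi^{K^+,K_M}$, corresponds to the relevant variant of the $z$-action and is therefore bijective on $M_\sigma(\pi)$. Hence $M_\sigma(\pi)$ meets the kernel of $q$ trivially, giving injectivity of $\alpha$. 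For surjectivity, given $\psi\in\Hom_{K_M}(\sigma_M,r^G_P(\pi))$, lift $\psi$ through the Jacquet-lemma surjection to a $K_M$-equivariant map $\sigma_M\to\pi^{K^+,K_M}$, extend this to a $K$-equivariant map $\sigma\to\pi$ using the Iwahori decomposition together with $\sigma|_{K^{-}}=1$, and correct by a suitable power of $T_{z^{-1}}$ so that the image under $\alpha$ is exactly $\psi$.

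The main obstacle is establishing the $\ell$-modular Jacquet-lemma cleanly: in positive characteristic one cannot average freely over compact subgroups whose pro-order is divisible by $\ell$, so proving that an honest power of $z$ annihilates the kernel of $q|_{\pi^{K^+,K_M}}$ (rather than merely doing so after a Hecke-algebra localisation) requires combining the shrinking behaviour of $z^n K^{-} z^{-n}$, the invertibility of $T_{z^{-1}}$ from the cover axiom, and the finite generation of $\pi$ to produce a uniform bound. Granted this, the map $\alpha$ is automatically functorial in $\pi$ and intertwines the $\mathcal{H}(M,\sigma_M)$-actions, giving the required isomorphism of objects of $\mathcal{M}(M,\sigma_M)$.
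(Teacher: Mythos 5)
The paper does not prove this statement: it is cited from Vign\'eras \cite[II \S 10.1]{vignerasselecta}, so there is no in-paper proof to compare against. Your outline reconstructs the standard Bushnell--Kutzko argument (\cite[Theorem 7.9]{BK98}) as adapted by Vign\'eras to the $\ell$-modular setting, and the construction of $\alpha$ and the equivariance check are sound in principle. However, the bijectivity argument has two genuine gaps.

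The surjectivity step is logically out of order. Given a $K_M$-equivariant lift $f\colon\sigma_M\to\pi^{K^{+}}$, you cannot \emph{extend this to a $K$-equivariant map $\sigma\to\pi$ using the Iwahori decomposition together with $\sigma|_{K^{-}}=1$}: since the representation spaces of $\sigma$ and $\sigma_M$ coincide, any such extension is forced to be the same linear map $f$, and that map is $K^{-}$-equivariant only when the image of $f$ happens to be $K^{-}$-fixed --- which there is no reason to expect. The role of the strongly positive element $z$ is precisely to repair this: one applies a sufficiently high power of the invertible operator $T_{z^{-1}}$ \emph{before} extending, so that the image lands in the range of the $K^{-}$-averaging operator, after which the extension is legitimate. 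Postponing the $T_{z^{-1}}$ correction until after the extension, as you do, cannot fix a map that was never well-defined. Separately, you correctly identify the $\ell$-modular Jacquet lemma as the main obstacle, but flagging it is not proving it. In the modular case it holds because $K^{+}$ is pro-$p$ with $\ell\neq p$, so the idempotents $e_{N_0}$ exist for each compact open $N_0\subset N$, and finite generation of $\pi$ furnishes a uniform power of $z$; however, the pro-$p$ condition on $K^{\pm}$ is part of Vign\'eras' framework and is not recorded in the definition of $G$-cover reproduced in this paper, so it must be made explicit. Without these two points spelled out, the argument assumes the very lemma it most needs to establish.
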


\subsection{Level zero $\ell$-modular representations}  
An irreducible representation $\pi$ of $G$ is of level zero if it has nontrivial invariants under the pro-$p$ unipotent radical of some maximal parahoric subgroup of $G$.

Let $\Lambda$ be a self--dual $\mathfrak{o}_F$-lattice sequence in $V$ and $\P(\Lambda)$ the associated parahoric subgroup in $G$.  We define \emph{parahoric induction} $\I_\Lambda:\mathfrak{R}_R(\M(\Lambda))\rightarrow \mathfrak{R}_R(G)$ by
\begin{align*}
\I_\Lambda(\sigma)=\ind_{\P(\Lambda)}^G(\widetilde{\sigma}),
\end{align*}
 for $\sigma$ an $R$-representation of $\M(\Lambda)$ and $\widetilde{\sigma}$ the inflation of $\sigma$ to $\P(\Lambda)$ defining $\P_1(\Lambda)$ to act trivially. This functor has a right adjoint, \emph{parahoric restriction} $\R_\Lambda:\mathfrak{R}_R(G)\rightarrow\mathfrak{R}_R(\M(\Lambda))$, defined by
\begin{align*}
\R_\Lambda(\pi)=\pi^{\P_1(\Lambda)}, 
\end{align*}
for $\pi$ an $R$-representation of $G$. Parahoric induction and restriction are exact functors.

We have the following important lemma due to Vign\'eras \cite{vigneras}.  In \textit{op.\,cit.} the statement is for a general $p$-adic reductive group $G$.
\begin{lemma}[Vign\'eras \cite{vigneras}]\label{lemma}
Let $\P_1=\P(\Lambda_1)$ and $\P_2=\P(\Lambda_2)$ be parahoric subgroups of $G$.  Let $\sigma$ be a finite length representation of $\M(\Lambda_2)$ and fix a set $D_{1,2}$ of distinguished double coset representatives of $\P_1\backslash G/\P_2$.  We have an isomorphism
\begin{align*}
\R_{\Lambda_1}\circ\I_{\Lambda_2}(\sigma)\simeq\bigoplus_{n\in D_{1,2}}{i}_{ P_{\Lambda_1,n\Lambda_2}}^{\M(\Lambda_1)}\left( r^{\M(\Lambda_2)}_{P_{\Lambda_2,n^{-1}\Lambda_1}}(\sigma)\right)^n.
\end{align*}
\end{lemma}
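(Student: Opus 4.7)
The plan is to reduce the computation of $\R_{\Lambda_1}\I_{\Lambda_2}(\sigma) = \ind_{\P_2}^G(\widetilde{\sigma})^{\P_1^1}$ to a sum indexed by $D_{1,2}$ via a Mackey decomposition, then for each double-coset term identify the $\P_1^1$-invariants of a compactly induced representation of $\P_1$ with a parabolically induced representation of $\M_1$, and finally identify the inner factor as the claimed Jacquet module using the Morris structure results recalled in the excerpt.

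First, since $\P_2$ is open in $G$ and $D_{1,2}$ is a set of double coset representatives for $\P_1\backslash G/\P_2$, Mackey's formula for compact induction gives
\begin{align*}
\Res^G_{\P_1}\ind_{\P_2}^G(\widetilde{\sigma}) \simeq \bigoplus_{n\in D_{1,2}}\ind_{\P_1\cap \P_2^n}^{\P_1}(\widetilde{\sigma}^n),
\end{align*}
where $\widetilde{\sigma}^n$ denotes the inflation $\widetilde{\sigma}$ of $\sigma$ to $\P_2$, conjugated to $\P_2^n$. Since taking $\P_1^1$-invariants commutes with direct sums, the problem reduces to computing each term $\ind_{\P_1\cap\P_2^n}^{\P_1}(\widetilde{\sigma}^n)^{\P_1^1}$ separately.

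Second, since $\P_1^1$ is normal in $\P_1$, the right adjoint to inflation from $\M_1=\P_1/\P_1^1$ is the $\P_1^1$-invariants functor. For any open subgroup $K$ of $\P_1$ and any $R[K]$-module $\rho$, a short Frobenius reciprocity computation gives
\begin{align*}
\ind_K^{\P_1}(\rho)^{\P_1^1}\simeq \Ind_{K\P_1^1/\P_1^1}^{\M_1}\!\left(\rho^{K\cap \P_1^1}\right).
\end{align*}
Applying this with $K=\P_1\cap\P_2^n$ and $\rho=\widetilde{\sigma}^n|_K$, and invoking Morris's result that $K\P_1^1/\P_1^1=P_{\Lambda_1,n\Lambda_2}$ is a parabolic subgroup of $\M_1$, reduces matters to identifying the invariant space $(\widetilde{\sigma}^n)^{K\cap \P_1^1}$ as a representation of this parabolic.

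Third, since $\widetilde{\sigma}^n$ is trivial on $(\P_2^n)^1$, it factors through $\P_2^n/(\P_2^n)^1\simeq \M_2^n$, and the invariants under $K\cap \P_1^1 = \P_2^n\cap\P_1^1$ are computed through the image of this subgroup in $\M_2^n$. Conjugating by $n$ and applying Morris's identification of the pro-$p$ unipotent radical of $\P_2^1(\P_2\cap\presuper{n}\P_1)$ as $\P_2^1(\P_2\cap\presuper{n}\P_1^1)$, this image equals the unipotent radical of the parabolic $P_{\Lambda_2,n^{-1}\Lambda_1}^n$ of $\M_2^n$. Hence the invariants are exactly $r^{\M_2}_{P_{\Lambda_2,n^{-1}\Lambda_1}}(\sigma)^n$ in the sense of the statement, where the superscript $n$ records the identification of the Levi quotient of $P_{\Lambda_1,n\Lambda_2}$ with the Levi quotient of $P_{\Lambda_2,n^{-1}\Lambda_1}$ conjugated by $n$. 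Combining the three steps yields the desired isomorphism; note that in the finite reductive setting the normalisation factors for Harish-Chandra induction and restriction are trivial, so no $\ell$-modular obstruction to normalisation arises.

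The main obstacle is the third step: verifying that the abstract subgroup $\P_2^n\cap\P_1^1$ maps precisely onto the unipotent radical of $P_{\Lambda_2,n^{-1}\Lambda_1}^n$ (and not merely into it), and tracking the Levi quotient identification across the conjugation by $n$ so that the two parabolic subgroups in $\M_1$ and $\M_2$ share the expected Levi. This is exactly where Morris's careful analysis of distinguished double coset representatives and Iwahori decompositions for parahoric subgroups is needed, and is the content of the results recalled just before the statement.
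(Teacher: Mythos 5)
The paper does not actually prove this lemma: it cites Vign\'eras and moves on, so there is no internal proof to compare against. Your argument is correct and is the natural one. The Mackey decomposition of $\Res^G_{\P_1}\ind_{\P_2}^G(\widetilde{\sigma})$ is valid since $\P_1,\P_2$ are compact open; the intermediate identity
$\ind_K^{\P_1}(\rho)^{\P_1^1}\simeq \Ind_{K\P_1^1/\P_1^1}^{\M_1}\bigl(\rho^{K\cap\P_1^1}\bigr)$
follows either directly or by the adjunction $\Hom_{\M_1}(\tau,\pi^{\P_1^1})\simeq\Hom_{\P_1}(\infl\,\tau,\pi)$ together with Frobenius reciprocity for $\ind_K^{\P_1}$; and with $K=\P_1\cap\P_2^n$, Morris's results identify $K\P_1^1/\P_1^1$ with the parabolic $P_{\Lambda_1,n\Lambda_2}$, show that the action on $\rho^{K\cap\P_1^1}$ is trivial on its unipotent radical $\P_1^1(\P_1\cap(\P_2^n)^1)/\P_1^1$ (so the induction is genuinely Harish--Chandra induction), and identify the image of $K\cap\P_1^1$ in $\M_2^n$ with the unipotent radical of $P_{\Lambda_2,n^{-1}\Lambda_1}^n$, giving the Jacquet module $r^{\M_2}_{P_{\Lambda_2,n^{-1}\Lambda_1}}(\sigma)^n$. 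The remaining bookkeeping you flag --- that the two Levi quotients $(\P_1\cap\P_2^n)\P_1^1/\P_1^1(\P_1\cap(\P_2^n)^1)$ and $n^{-1}\bigl((\P_2\cap\presuper{n}\P_1)\P_2^1/\P_2^1(\P_2\cap\presuper{n}\P_1^1)\bigr)n$ are canonically identified as quotients of $\P_1\cap\P_2^n$ by the same subgroup --- is exactly what Morris's Iwahori-factorisation analysis for distinguished representatives supplies, and you correctly defer to it rather than reproving it.
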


\begin{lemma}\label{lemma111}
Let $\P(\Lambda_1)$ and $\P(\Lambda_2)$ be parahoric subgroups of $G$ associated to the $\mathfrak{o}_F$-lattice sequences $\Lambda_1$ and $\Lambda_2$ in $V$.  Suppose that $\P(\Lambda_1)$ is maximal and let $\sigma$ be an irreducible cuspidal representation of $\M(\Lambda_1)$.  We have
\begin{align*}
\R_{\Lambda_2}\circ\,\I_{\Lambda_1}(\sigma)=\begin{cases}\sigma&\text{if }\P(\Lambda_2)\text{ is conjugate to }\P(\Lambda_1)\text{ in }G;\\0&\text{otherwise.}\end{cases}
\end{align*}
\end{lemma}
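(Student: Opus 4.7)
The plan is to apply Vign\'eras's Lemma \ref{lemma}, use cuspidality of $\sigma$ to kill most summands, and then appeal to the maximality of $\P(\Lambda_1)$ to identify the surviving term. Write $\P_i=\P(\Lambda_i)$ and fix a set $D_{2,1}$ of distinguished double coset representatives for $\P_2\backslash G/\P_1$. Applying Lemma \ref{lemma} with the roles of $\Lambda_1$ and $\Lambda_2$ interchanged gives
\begin{align*}
\R_{\Lambda_2}\circ\I_{\Lambda_1}(\sigma)\simeq\bigoplus_{n\in D_{2,1}}i_{P_{\Lambda_2,n\Lambda_1}}^{\M(\Lambda_2)}\bigl(r^{\M(\Lambda_1)}_{P_{\Lambda_1,n^{-1}\Lambda_2}}(\sigma)\bigr)^n,
\end{align*}
and since $\sigma$ is cuspidal, $r^{\M(\Lambda_1)}_{P_{\Lambda_1,n^{-1}\Lambda_2}}(\sigma)=0$ unless $P_{\Lambda_1,n^{-1}\Lambda_2}$ is the improper parabolic of $\M(\Lambda_1)$, that is, unless $\P_1=\P_1^1(\P_1\cap\presuper{n}\P_2)$.

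The next step is geometric: the image of $\P_1\cap\presuper{n}\P_2$ in $\M(\Lambda_1)=\P_1/\P_1^1$ is the Levi of $P_{\Lambda_1,n^{-1}\Lambda_2}$, and it exhausts $\M(\Lambda_1)$ only when the facet in the Bruhat--Tits building associated to $n\Lambda_2$ coincides with the facet of $\Lambda_1$. Since $\P_1$ is maximal, the facet of $\Lambda_1$ is a vertex, and coincidence forces $\presuper{n}\P_2=\P_1$. This immediately proves the vanishing case: if $\P_1$ and $\P_2$ are not $G$-conjugate, no such $n$ exists. In the conjugate case, for any $n_0\in D_{2,1}$ with $\presuper{n_0}\P_2=\P_1$ the intersection $\P_1\cap\presuper{n_0}\P_2=\P_1$ gives Jacquet module equal to $\sigma$; symmetrically $P_{\Lambda_2,n_0\Lambda_1}=\M(\Lambda_2)$ so the parabolic induction is the identity functor; and the twist by $n_0$ identifies the result with $\sigma$ viewed as a representation of $\M(\Lambda_2)$ through the conjugation isomorphism $\M(\Lambda_1)\xrightarrow{\sim}\M(\Lambda_2)$.

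The main obstacle is showing that exactly one double coset contributes, so that the answer is a single copy of $\sigma$ rather than a direct sum of copies indexed by $N_G(\P_1)/\P_1$. This reduces to checking that the maximal parahoric $\P_1$ is self-normalising in $G$. For both conjugacy classes of maximal parahorics of $\U(2,1)$ this can be verified directly using the description of the extended affine Weyl group $\widetilde W=N_G(T)/T^0$ as the infinite dihedral group $\langle w_x,w_y\rangle$: the stabiliser in $\widetilde W$ of each vertex type is generated by the corresponding reflection, which already lies in the associated parahoric. Combined with the compactness of $Z(G)=\U(1)(F/F_0)$ (so $Z(G)$ is contained in every maximal parahoric) this yields $N_G(\P(\Lambda_x))=\P(\Lambda_x)$ and $N_G(\P(\Lambda_y))=\P(\Lambda_y)$, completing the proof.
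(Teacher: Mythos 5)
Your proof is correct and follows the same high-level route as the paper (Lemma~\ref{lemma} plus cuspidality of $\sigma$), but you resolve a real gap that the paper's own one-line argument glosses over. In the conjugate case the paper asserts $\P(\Lambda_1)\backslash G/\P(\Lambda_2)\simeq 1$, which cannot be taken literally: for two (conjugate) maximal parahorics of a $p$-adic group, the double coset space $\P_1\backslash G/\P_1$ is infinite (in the tree for $\U(2,1)$ it is indexed by distances from the base vertex). What the paper should be saying is that, after cuspidality kills all terms where $P_{\Lambda_1,n^{-1}\Lambda_2}$ is a proper parabolic, only the double cosets with $\presuper{n}\P_2=\P_1$ survive, and these are indexed by $N_G(\P_1)/\P_1$. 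You identify this explicitly and then close the gap by checking that both conjugacy classes of maximal parahorics are self-normalising, using the description of $\widetilde W=\langle w_x,w_y\rangle$ as a genuine affine (Coxeter) dihedral group with trivial $\Omega$-part, so that the vertex stabilisers in $\widetilde W$ are the reflection subgroups $\{1,w_x\}$ and $\{1,w_y\}$, both already inside the respective parahorics. This is exactly the content needed to conclude that a single double coset contributes and the answer is one copy of $\sigma$ rather than a direct sum. Your observation that $Z(G)=\U(1)(F/F_0)$ is compact and hence sits inside every maximal parahoric is a sensible way to dispose of any extra normaliser that might come from the centre. One tiny imprecision: the image of $\P_1\cap\presuper{n}\P_2$ in $\M(\Lambda_1)$ is the full parabolic $P_{\Lambda_1,n^{-1}\Lambda_2}$ (Morris), not just its Levi; this does not affect your argument since you only use that this parabolic is proper unless $\presuper{n}\P_2=\P_1$. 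Also note that the paper's proof writes $P_{\Lambda_2,n^{-1}\Lambda_1}\subset \M(\Lambda_2)$ where the Jacquet module is actually taken on $\M(\Lambda_1)$; your formula correctly swaps the indices after interchanging $\Lambda_1$ and $\Lambda_2$ in Lemma~\ref{lemma}.
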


\begin{proof}
 If $\P(\Lambda_2)$ is conjugate to $\P(\Lambda_1)$ then $\P(\Lambda_1)\backslash G/\P(\Lambda_2)\simeq 1$ hence $\R_{\Lambda_2}\circ\I_{\Lambda_1}(\sigma)\simeq \sigma$ by Lemma \ref{lemma}.  If $\P(\Lambda_2)$ is not conjugate to $\P(\Lambda_1)$ then for all $n\in D_{1,2}$ the parabolic subgroup $P_{\Lambda_2,n^{-1}\Lambda_1}$ is a proper parabolic subgroup of $\M(\Lambda_2)$.  Hence $\R_{\Lambda_2}\circ\I_{\Lambda_1}(\sigma)\simeq 0$ by Lemma \ref{lemma} and cuspidality of $\sigma$.
\end{proof}

\subsection{Positive level cuspidal $\ell$-modular representations} 
\subsubsection{Semisimple strata and characters}\label{431}
Let $[\Lambda,n,r,\beta]$ be a \emph{skew semisimple stratum} in $A$, see Stevens \cite[Definition $2.8$]{St08}.  Associated to $[\Lambda,n,r,\beta]$ and a fixed level one character of $F_0^\times$ are: \begin{enumerate}
\item A decomposition $V=\bigoplus_{i=1}^l V_i$, orthogonal with respect to $h$, and a sum of field extensions $E=\bigoplus_{i=1}^l E_i$ of $E$ such that $\Lambda=\bigoplus_{i=1}^l \Lambda_i$ with $\Lambda_i$ an $\mathfrak{o}_{E_i}$-lattice sequence in $V_i$ (we say that $\Lambda$ is an \emph{$\mathfrak{o}_E$-lattice sequence} and write $\Lambda_E$ when we are considering $\Lambda$ as such). 
\item The $F_0$-points of a product of unramified unitary groups defined over $F_0$,  $\GE=\prod_{i=1}^l \GEi$.
\item  Compact open subgroups $H(\Lambda,\beta)\subseteq J(\Lambda,\beta)$ of $G$ with decreasing filtrations by pro-$p$ normal compact open subgroups $H^n(\Lambda,\beta)=H(\Lambda,\beta)\cap \P_n(\Lambda)$ and $J^n(\Lambda,\beta)=J(\Lambda,\beta)\cap \P_n(\Lambda)$, $n\geqslant 1$.  When $\Lambda$ is fixed we write $J=J(\Lambda,\beta)$, $H=H(\Lambda,\beta)$, and use similar notation for their filtration subgroups. We have $J=P(\Lambda_E) J^1$ where $\P(\Lambda_E)$ is the parahoric subgroup of $\GE$ obtained by considering $\Lambda$ as an $\mathfrak{o}_E$-lattice sequence.   
\item A set of \emph{semisimple characters} $\mathcal{C}_{-}(\Lambda,r,\beta)$ of $H^1(\Lambda,\beta)$.   We let $\mathcal{C}_{-}(\Lambda,\beta)=\mathcal{C}_{-}(\Lambda,0,\beta)$. \end{enumerate}
Let $[\Lambda_i,n,0,\beta]$, for $i=1,2$, be skew semisimple strata in $A$.  For all $\theta_1\in\mathcal{C}_{-}(\Lambda_1,\beta)$, there is a unique $\theta_2\in\mathcal{C}_{-}(\Lambda_2,\beta)$ such that $1\in I_G(\theta_1,\theta_2)$ by Stevens \cite[Proposition 3.32]{St05}.  This defines a bijection 
\begin{align*}\tau_{\Lambda_1,\Lambda_2,\beta}: \mathcal{C}_{-}(\Lambda_1,\beta)\rightarrow\mathcal{C}_{-}(\Lambda_2,\beta)\end{align*}
and we call $\theta_2=\tau_{\Lambda_1,\Lambda_2,\beta}(\theta_1)$ the \emph{transfer} of $\theta_1$.

The skew semisimple strata in $A$ fall into three classes.
\begin{enumerate}
\item Skew simple strata $[\Lambda,n,0,\beta]$; where $E$ is a field. \begin{enumerate}\item  If $E=F$ we say that $[\Lambda,n,0,\beta]$ is a scalar skew simple stratum.  Then $J/J^1= \P(\Lambda)/\P_1(\Lambda)$ is isomorphic to $\GL_1(k_F)\times\U(1)(k_F/k_0)$, $\U(1,1)(k_F/k_0)\times\U(1)(k_F/k_0)$ or $\U(2,1)(k_F/k_0)$.  
 \item Otherwise $E/F$ is cubic and $J/J^1\simeq \P(\Lambda_E)/\P_1(\Lambda_E)\simeq \U(1)(k_E/k_{E^0})$, a finite unitary group of order $q_{E^0}+1$ where 
\begin{align*}
q_{E^0}=\begin{cases}q_0^3&\text{if $E/F$ is unramified};\\ q_0&\text{if $E/F$ is ramified.}\end{cases}\end{align*} \end{enumerate}
\item Skew semisimple strata $[\Lambda,n,0,\beta]=[\Lambda_1,n_1,0,\beta_1]\oplus[\Lambda_2,n_2,0,\beta_2]$, not equivalent to a skew simple stratum, with $[\Lambda_i,n_i,0,\beta_i]$ skew simple strata in $\End_{F_0}(V_i)$, $i=1,2$.  Without loss of generality suppose that $V_1$ is one dimensional and $V_2$ is two dimensional. We have $J/J^1\simeq \prod_{i=1}^2 \P(\Lambda_{i,E})/\P_1(\Lambda_{i,E})$.  If $\beta_2\in F$ and $V_2$ is hyperbolic then $G_E\simeq U(1,1)(F/F_0)\times\U(1)(F/F_0)$ and $\P(\Lambda_{2,E})$ is a parahoric subgroup of $\U(1,1)(F/F_0)$ and need not be maximal.  If $\beta_2\in F$ and $V_2$ is anisotropic then $G_E\simeq \U(2)(F/F_0) \times \U(1)(F/F_0)$ is compact.  If $E_2/F$ is quadratic then it is ramified because there is a unique unramified extension of $F_0$ in each degree and $E_2^0/F_0$ is quadratic and also fixed by the involution.  Thus if $E_2/F$ is quadratic then $J/J^1\simeq \U(1)(k_F/k_0)\times\U(1)(k_F/k_0)$.
\item Skew semisimple strata $[\Lambda,n,0,\beta]=\bigoplus_{i=1}^3[\Lambda_i,n_i,0,\beta_i]$, not equivalent to a skew semisimple stratum of the first two classes, with $[\Lambda_i,n_i,0,\beta_i]$ skew simple strata in $\End_{F_0}(V_i)$.  In this case, $J/J^1\simeq \U(1)(k_F/k_0)\times\U(1)(k_F/k_0)\times\U(1)(k_F/k_0)$.
\end{enumerate}

We say that $\pi$ \emph{contains} the skew semisimple stratum $[\Lambda,n,0,\beta]$ if it contains a character $\theta\in\mathcal{C}_{-}(\Lambda,\beta)$.  

\begin{thm}[{Stevens \cite[Theorem $5.1$]{St05}}]\label{plss}
Let $\pi$ be an irreducible cuspidal $\ell$-modular representation of $G$.  Then $\pi$ contains a skew semisimple stratum $[\Lambda,n,0,\beta]$. 
\end{thm}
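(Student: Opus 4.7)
The plan is to adapt Stevens' proof of \cite[Theorem 5.1]{St05}, written for complex coefficients, to the $\ell$-modular setting. The goal is to produce a skew semisimple stratum $[\Lambda,n,0,\beta]$ together with a character $\theta\in\mathcal{C}_{-}(\Lambda,\beta)$ appearing as a subrepresentation of the restriction of $\pi$ to $H^1(\Lambda,\beta)$. Since $\pi$ is irreducible it is admissible by Vign\'eras \cite[II 2.8]{Vig96}, so $\pi^{\P_m(\Lambda)}$ is finite-dimensional for every self-dual $\Lambda$ and every $m\geq 0$; in particular some irreducible representation of a sufficiently small pro-$p$ subgroup appears in $\pi$.

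First I would handle the level zero case: if there exists a self-dual $\Lambda$ with $\P(\Lambda)$ maximal and $\pi^{\P_1(\Lambda)}\neq 0$, then $\pi$ contains the trivial character of $\P_1(\Lambda)=H^1(\Lambda,0)$, which is the unique semisimple character attached to the null stratum $[\Lambda,0,0,0]$. Otherwise $\pi$ has positive level, and I would follow a Bushnell-style two-step exhaustion: (i) produce a fundamental skew stratum contained in $\pi$, and then (ii) refine this fundamental stratum to a semisimple one. For step (i), choose the minimal $n\geq 1$ such that $\pi^{\P_{n+1}(\Lambda)}\neq 0$ for some self-dual $\Lambda$. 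Because $\P_n(\Lambda)/\P_{n+1}(\Lambda)$ is an abelian pro-$p$ group and $\ell\neq p$, its irreducible $\Fl$-representations are characters $\psi_b$ parametrised by skew cosets $b\in\mathfrak{P}_{-n}^{-}(\Lambda)/\mathfrak{P}_{1-n}^{-}(\Lambda)$ via Pontrjagin duality, exactly as over $\Ql$; some such $\psi_b$ occurs in $\pi$, and minimality of $n$ forces the resulting skew stratum $[\Lambda,n,n-1,b]$ to be fundamental, for otherwise one could pass to a finer self-dual lattice sequence and contradict the minimality. Step (ii) is Stevens' refinement procedure, which iteratively replaces $b$ by elements of increasingly structured algebraic shape (first a minimal element, then a sum of minimal elements corresponding to an orthogonal decomposition of $V$) and matches the corresponding semisimple characters along the way via the transfer maps $\tau_{\Lambda_1,\Lambda_2,\beta}$ recalled in Section \ref{431}.

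The main obstacle is verifying that every intertwining and Mackey-type argument used by Stevens at the refinement step continues to hold over $\Fl$. The key ingredients are the intertwining formulas for semisimple characters and the $H^1$-matching lemmas, both of which are controlled by double cosets of the pro-$p$ subgroups $H^1(\Lambda,\beta)$ and $J^1(\Lambda,\beta)$. Since $\ell\neq p$, the relevant Hecke algebras are obtained by reduction modulo $\ell$ from $\Zl$-algebras whose structure constants already lie in $\Zl^\times$, so the characteristic zero computations reduce identically modulo $\ell$. This is why Stevens' construction applies verbatim to every $\ell$-modular cuspidal $\pi$, including those which do not themselves lift to an $\ell$-adic cuspidal representation.
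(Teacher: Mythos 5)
The paper does not prove this theorem: it simply cites Stevens \cite[Theorem 5.1]{St05}, a result proved for characteristic-zero coefficients, and applies it to $\ell$-modular representations without further comment. There is therefore no internal proof to compare against; your proposal fills a gap the paper leaves implicit, and its overall shape (level zero case via the null stratum; positive level via a minimal fundamental stratum followed by Stevens' refinement; justifying each step by the observation that everything concerns pro-$p$ subgroups and $\ell\neq p$) is the right one.

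Two points deserve sharpening. First, the minimality that yields a fundamental stratum is a minimality of the normalised level $n/e(\Lambda)$, where $e(\Lambda)$ is the period of $\Lambda$; choosing minimal $n$ alone is not well posed since different self-dual lattice sequences have different periods, and the whole Moy--Prasad/Bushnell argument is about comparing $n/e(\Lambda)$ across $\Lambda$. Second, your closing paragraph attributes the transfer to $\Fl$ to a statement about Hecke algebras ``with structure constants in $\Zl^\times$''; this is not really where the content lies. Stevens' refinement arguments are not Hecke-algebra computations but repeated applications of Clifford theory for pro-$p$ normal subgroups, intertwining computations (which are group-theoretic and coefficient-free), and duality between finite abelian $p$-groups and their $R$-valued character groups. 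All of these hold over $\Fl$ because $\ell\neq p$: every smooth $\Fl$-representation of a pro-$p$ group is semisimple, $\Fl^\times$ contains all $p$-power roots of unity so the Pontrjagin pairing behaves as over $\Ql$, and the intertwining sets $I_G(\theta)$ are determined by the same double-coset conditions. Phrasing the verification in those terms, rather than via Hecke algebras, matches what actually needs to be checked in Stevens' proof and closes the argument.
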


This is where we start in our construction of all irreducible cuspidal $\ell$-modular representations of $G$.

\subsubsection{Heisenberg representations}
Let $\theta\in\mathcal{C}_{-}(\Lambda,\beta)$.  By Stevens \cite[Corollary $3.29$]{St08}, there exists a unique irreducible representation $\eta$ of $J^1(\Lambda,\beta)$ which contains $\theta$.  We call such an $\eta$ a \emph{Heisenberg representation}.  Furthermore, by Stevens \cite[Proposition $3.31$]{St08}, \begin{equation*}
\dim_R(I_g(\eta))=\begin{cases} 1&\text{if }g\in J^1G_EJ^1,\\
0&\text{otherwise.}\end{cases}
\end{equation*}

\subsubsection{$\beta$-extensions}

Assume $P(\Lambda_E)$ is maximal.  A \emph{$\beta$-extension} of $\eta$ to $J=J(\Lambda,\beta)$ is an extension $\kappa$ with maximal intertwining, $I_G(\kappa)=I_G(\eta)$.  By Blasco \cite[Lemma $5.8$]{blasco}, for all maximal skew semisimple strata which are not skew scalar simple strata, $\beta$-extensions exist in the $\ell$-adic case for $G$ and for $\ell$-modular representations we obtain $\beta$-extensions by reduction modulo-$\ell$ from the $\ell$-adic extensions.  Let $[\Lambda,n,0,\beta]$ be a scalar skew simple stratum and $\theta\in \mathcal{C}_{-}(\Lambda,\beta)$.  Then $J^1=H^1=\P_1(\Lambda)$ and $J=\P(\Lambda)$ and $\theta=\chi\circ\det$ for some character $\chi$ of $\P_1(\Lambda)$ (\emph{c.f.} Bushnell--Kutzko \cite[Definition $3.23$]{BK93}).  The character $\chi$ extends to a character $\widetilde{\chi}$ of $F^1$ and we define $\kappa:J\rightarrow R^\times$ by $\kappa=\widetilde{\chi}\circ\det$.  Then $\kappa$ extends $\theta$ and is intertwined by all of $G$, hence is a $\beta$-extension.  Hence, in the maximal case, $\beta$-extensions exist.

Let $[\Lambda,n,0,\beta]$ be a skew semisimple stratum.  Suppose $P(\Lambda_E)$ is not maximal and choose a maximal parahoric subgroup $P(\Lambda^{m}_E)$ of $\GE$ associated to the $\mathfrak{o}_E$-lattice sequence $\Lambda^m_E$ in $V$ such that $P(\Lambda_E)\subset P(\Lambda^{m}_E)$.  This implies that $P(\Lambda)\subset P(\Lambda^{m})$.  Note that this is the case for unramified $\U(2,1)(E/F)$, but not for classical groups in general.  Let $\theta\in \mathcal{C}_{-}(\Lambda,\beta)$ and $\eta$ be the irreducible representation of $J^1_m=J^1(\beta,\Lambda)$ which contains $\theta$.  Let $\theta_{m}=\tau_{\Lambda,\Lambda^{m},\beta}(\theta)$ and $\eta_{m}$ be the irreducible representation of $J^1(\beta,\Lambda^{m})$ which contains $\theta_{m}$.  Let $\kappa_{m}$ be a $\beta$-extension of $\eta_{m}$.  

\begin{lemma}\label{compatbeta}
There exists a unique extension $\kappa$ of $\eta$ to $J$ such that $\Res^{J_m}_{P(\Lambda_E)J^1_m}(\kappa_{m})$ and $\kappa$ induce equivalent irreducible representations of $P(\Lambda_E)P_1(\Lambda)$.  
\end{lemma}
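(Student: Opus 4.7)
The plan is to follow the compatible $\beta$-extension strategy of Bushnell--Kutzko and Stevens (compare \cite[\S 5.2]{BK93} and the related constructions in \cite{St08}). Define
$$\rho := \Ind_{P(\Lambda_E)J^1_m}^{P(\Lambda_E)P_1(\Lambda)}\bigl(\Res^{J_m}_{P(\Lambda_E)J^1_m}(\kappa_m)\bigr),$$
and proceed in two main steps: first show that $\rho$ is irreducible, then construct the required extension $\kappa$ of $\eta$ as an irreducible $J$-component of $\rho|_J$ whose induction recovers $\rho$.

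For the irreducibility step I would use Mackey to write $\dim\End(\rho)$ as a sum, over representatives of $(P(\Lambda_E)J^1_m)\backslash P(\Lambda_E)P_1(\Lambda)/(P(\Lambda_E)J^1_m)$, of intertwining dimensions of the restricted representation $\Res^{J_m}_{P(\Lambda_E)J^1_m}(\kappa_m)$. The controlling input is the maximal intertwining $I_G(\kappa_m) = J^1_m G_E J^1_m$ combined with an Iwahori-style decomposition of $P_1(\Lambda)$ along the Levi $G_E$: only the trivial double coset can support a nontrivial intertwiner, and that contribution is one-dimensional because the restriction of the irreducible $\kappa_m$ remains irreducible by a Clifford-theoretic argument. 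To then produce $\kappa$ I would compute $\Res_{J^1}\rho$ by Mackey; each summand restricts on $J^1$ to a copy of $\eta$, using the transfer relation $\theta_m=\tau_{\Lambda,\Lambda^m,\beta}(\theta)$ and the uniqueness of the Heisenberg extension from \cite[Corollary 3.29]{St08}. Consequently $\rho|_J$ is a sum of extensions of $\eta$ to $J$, and any irreducible constituent $\kappa$ satisfies $\Ind_J^{P(\Lambda_E)P_1(\Lambda)}(\kappa)\simeq\rho$ by Frobenius reciprocity and the irreducibility of $\rho$. Uniqueness follows because two extensions of $\eta$ to $J$ differ by a character of $J/J^1\simeq P(\Lambda_E)/P_1(\Lambda_E)$, and at most one such character is compatible with the already-specified $\rho$.

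The main obstacle I anticipate is the irreducibility of $\rho$: the double coset analysis has to be carried out with respect to the non-maximal parahoric $P(\Lambda_E)$ rather than $P(\Lambda^m_E)$, and the fact that the intertwining support collapses is a feature of $\U(2,1)$ that depends on $P(\Lambda_E)$ being contained in a single maximal parahoric of $G_E$, a low-rank phenomenon that would require a significantly more intricate coset analysis in larger classical groups.
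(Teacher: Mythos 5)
Your approach reproves a result that the paper simply imports. The paper's proof is short precisely because it cites Stevens \cite[Lemma 4.3]{St08} to produce the $\ell$-adic extension $\widetilde{\kappa}$ with the required compatibility property, then reduces modulo $\ell$ to obtain the $\ell$-modular $\kappa$ (which is automatically irreducible, being an extension of $\eta$), and finally verifies irreducibility of $\ind_J^{\P(\Lambda_E)\P_1(\Lambda)}\kappa$ via the intertwining bound $J\subseteq I_{\P(\Lambda_E)\P_1(\Lambda)}(\kappa)\subseteq I_{\P(\Lambda_E)\P_1(\Lambda)}(\eta)=J$. Reduction modulo $\ell$ gives only an equality of semisimplifications of the two induced representations, so the irreducibility check is essential to upgrade that equality to an isomorphism. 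Your proposal, by contrast, tries to build both the existence and the irreducibility of $\rho$ from scratch via a Mackey/Clifford analysis, with no use of the $\ell$-adic theory or reduction modulo $\ell$. That is a genuinely different route, and if it worked it would give a uniform proof in both characteristics; but it has real gaps.

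First, the step "$\Ind_J^{P(\Lambda_E)P_1(\Lambda)}(\kappa)\simeq\rho$ by Frobenius reciprocity and the irreducibility of $\rho$" only produces a surjection $\Ind_J(\kappa)\twoheadrightarrow\rho$. To promote this to an isomorphism you need either a dimension count ($[\P(\Lambda_E)\P_1(\Lambda):J]\dim\eta = [\P(\Lambda_E)\P_1(\Lambda):\P(\Lambda_E)J^1_m]\dim\eta_m$, which requires the Heisenberg dimension formulae and index computations) or an independent proof that $\Ind_J(\kappa)$ is irreducible — and the latter is essentially the intertwining argument the paper runs, so you have not avoided it. Second, your claim that each Mackey summand of $\Res_{J^1}\rho$ "restricts on $J^1$ to a copy of $\eta$" via the transfer $\theta_m=\tau_{\Lambda,\Lambda^m,\beta}(\theta)$ needs to be verified coset by coset; the relation between $J^1$, $J^1_m$, $\P(\Lambda_E)$, and $\P_1(\Lambda)$ across distinct double cosets is exactly the content of Stevens \cite[Lemma 4.3]{St08}, which is not a triviality even in low rank. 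Third, and most importantly for this paper, you never engage with the coefficient field: in the $\ell$-modular setting $\rho|_J$ need not be semisimple, so "a sum of extensions of $\eta$" is not automatic, and the controlled intertwining facts you quote for $\kappa_m$ are not stated modulo $\ell$ without an argument (the paper gets them for free by reducing the $\ell$-adic $\widetilde\kappa$). If you want to keep your architecture, you should at minimum (a) isolate and prove the dimension equality, (b) establish that the relevant intertwining spaces remain one-dimensional after reduction mod $\ell$, and (c) handle the possible failure of semisimplicity of $\rho|_J$ by working with a chosen irreducible subrepresentation rather than a direct summand.
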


\begin{proof}
If $\P(\Lambda_E)$ is maximal then $\kappa_{m}=\kappa$ and there is nothing to prove.  In the $\ell$-adic case, by Stevens \cite[Lemma $4.3$]{St08}, there exists the required irreducible representation $\widetilde{\kappa}$ of $J$.  By reduction modulo-$\ell$, we have an irreducible $\ell$-modular representation $\kappa=r_{\ell}(\widetilde{\kappa})$ which extends $\eta$ such that
\[\left[\ind_{J}^{\P(\Lambda_E)\P_1(\Lambda)}\kappa\right]=\left[\ind^{\P(\Lambda_E)\P_1(\Lambda)}_{\P(\Lambda_E)J^1_m}\Res^{J_m}_{\P(\Lambda_E)J^1_m}(\kappa_{m})\right].\]
Furthermore, $J\subseteq I_{\P(\Lambda_E)\P_1(\Lambda)}(\kappa)\subseteq I_{\P(\Lambda_E)\P_1(\Lambda)}(\eta)=J$ hence by Mackey Theory $\ind_{J}^{\P(\Lambda_E)\P^1(\Lambda)}\kappa$ is irreducible.
\end{proof}

A \emph{$\beta$-extension} of $\eta$ is an extension $\kappa$ of $\eta$ to $J$ constructed in this way.  We call two $\beta$-extensions which induce equivalent representations, as in Lemma \ref{compatbeta}, \emph{compatible}.    With the next Lemma we show we can ``go backwards''  and from a $\beta$-extension defined in the minimal case we define two unique compatible $\beta$-extensions in the maximal case.  In this way we get a triple of \emph{compatible} $\beta$-extensions.  Let $\P(\Lambda_E^r)$ be a maximal parahoric subgroup of $\GE$ containing $\P(\Lambda_E)$ associated to the $\mathfrak{o}_E$-lattice sequence $\Lambda_E^r$ in $V$.  Let $\theta_{r}=\tau_{\Lambda,\Lambda^{r},\beta}(\theta)$, $\eta_{r}$ be the irreducible representation of $J^1(\beta,\Lambda^{r})$ which contains $\theta_{r}$, and $\kappa$ be a $\beta$-extension of $\eta$. 

\begin{lemma}\label{betascompat}
There exists a unique $\beta$-extension $\kappa_r$ of $\eta_r$ which is compatible with $\kappa$.
\end{lemma}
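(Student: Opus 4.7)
I would mirror the strategy of Lemma~\ref{compatbeta}, but running it in reverse, exploiting the torsor structure on $\beta$-extensions together with the uniqueness already established in that lemma.

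For existence, since $\P(\Lambda^r_E)$ is maximal I can pick any $\beta$-extension $\kappa_r'$ of $\eta_r$ (such exist by the discussion preceding Lemma~\ref{compatbeta}). Applying Lemma~\ref{compatbeta} with $\Lambda^m=\Lambda^r$ produces a unique $\beta$-extension $\kappa'$ of $\eta$ on $J$ compatible with $\kappa_r'$. Both $\kappa$ and $\kappa'$ extend $\eta$ from $J^1$ to $J$, so they differ by a character $\chi$ of $J/J^1\simeq \P(\Lambda_E)/\P_1(\Lambda_E)$. Now $\P(\Lambda_E)/\P_1(\Lambda_E)$ is identified with the Levi component of a parabolic subgroup of $J_r/J_r^1\simeq \P(\Lambda^r_E)/\P_1(\Lambda^r_E)$, so I inflate $\chi$ through the parabolic-to-Levi projection to obtain a character $\chi_r$ of $J_r/J_r^1$, and set $\kappa_r := \chi_r \cdot \kappa_r'$, with $\chi_r$ lifted to $J_r$ trivially on $J_r^1$. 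The principal check is that $\kappa_r$ is still a $\beta$-extension, i.e.\ that $I_G(\kappa_r)=J_r^1\GE J_r^1$; this follows because $\chi_r$, being inflated from a character of the connected reductive quotient of a maximal parahoric, is $\GE$-equivariant, and twisting by a $\GE$-equivariant character does not change the intertwining set. Finally, unwinding the compatibility formula of Lemma~\ref{compatbeta}—induction from $\P(\Lambda_E)J^1_r$ to $\P(\Lambda_E)\P_1(\Lambda)$—one sees that $\kappa_r$ corresponds under Lemma~\ref{compatbeta} to $\chi\cdot\kappa'=\kappa$.

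For uniqueness, suppose $\kappa_r^{(1)}$ and $\kappa_r^{(2)}$ are two $\beta$-extensions of $\eta_r$ compatible with $\kappa$; their ratio is a character $\chi_r$ of $J_r/J_r^1$. Applying Lemma~\ref{compatbeta} to each, the resulting compatible extensions of $\eta$ to $J$ both equal $\kappa$, so the restriction of $\chi_r$ to the Levi $\P(\Lambda_E)/\P_1(\Lambda_E)$ is trivial by the uniqueness statement of Lemma~\ref{compatbeta}. Combined with the constraint that characters twisting $\beta$-extensions must be $\GE$-equivariant, so $\chi_r$ is determined by its restriction to the Levi, this forces $\chi_r=1$, hence $\kappa_r^{(1)}=\kappa_r^{(2)}$. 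In the $\ell$-modular case, I would lift $\kappa$ to an integral $\ell$-adic $\beta$-extension $\widetilde{\kappa}$, apply the $\ell$-adic result to obtain $\widetilde{\kappa}_r$, and set $\kappa_r=r_\ell(\widetilde{\kappa}_r)$, exactly as at the end of Lemma~\ref{compatbeta}, using that reduction modulo $\ell$ commutes with compact induction.

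The main obstacle will be the verification that the twist $\chi_r\cdot\kappa_r'$ retains maximal intertwining, which reduces to establishing $\GE$-equivariance of characters of $J_r/J_r^1$ inflated from the Levi of a parabolic subgroup. A secondary difficulty is the parametrisation of the compatibility correspondence of Lemma~\ref{compatbeta} by $\GE$-equivariant characters, needed both for the equivariance underlying the existence construction and for the uniqueness argument.
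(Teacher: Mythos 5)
Your strategy tracks the paper's closely: pick an arbitrary $\beta$-extension $\kappa_r'$ of $\eta_r$, compare with $\kappa$ to extract a correction character, and twist. But the pivotal step is asserted rather than proved. You write that you ``inflate $\chi$ through the parabolic-to-Levi projection to obtain a character $\chi_r$ of $J_r/J_r^1$''; however, the parabolic-to-Levi projection is a surjection $B_r \to T_r$ (with $B_r$ the image of $\P(\Lambda_E)$ in $\M_r=\P(\Lambda^r_E)/\P_1(\Lambda^r_E)$ and $T_r\simeq\P(\Lambda_E)/\P_1(\Lambda_E)$), so inflating through it only produces a character of the parabolic $B_r$, \emph{not} of all of $\M_r$. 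To promote it to a character of $\M_r$ one must verify that $\chi$ is intertwined by the nontrivial Weyl element $w$; this is exactly where the paper's proof does its real work: $\chi$ is the ratio of two pieces of data each of which has full $\GE$-intertwining, hence is $w$-intertwined, and then the Bruhat decomposition $\M_r = B_r \cup B_rwB_r$ forces $\chi$ to extend. Your identification of the ``main obstacle'' (that $\chi_r\cdot\kappa_r'$ retains maximal intertwining) points at the easy part — once $\chi_r$ is a genuine character of $\M_r$ it is automatically $\GE$-intertwined — while passing over the genuine gap, namely producing $\chi_r$ on $\M_r$ in the first place.

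Your uniqueness paragraph rests on an equivariance property of the compatibility correspondence that you state but do not establish: you take for granted that if two $\beta$-extensions of $\eta_r$ are both compatible with $\kappa$, then the restriction of their ratio to $T_r$ is trivial. Unwinding this requires going through the isomorphism of induced representations in Lemma~\ref{compatbeta} and using the irreducibility established there. (The paper's own proof, as printed, addresses only existence, so uniqueness needs some argument in either case.) The cleanest route is the torsor picture you gesture at: $\beta$-extensions of $\eta_r$ form a torsor under characters of $\M_r$; such characters are trivial on unipotents and $w$-invariant, hence injectively determined by their restriction to $T_r$; and compatibility with a fixed $\kappa$ pins down that restriction. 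Making this precise is the missing content.
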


\begin{proof}
There exists a representation $\widehat{\kappa}$ of $P(\Lambda_E)J^1_r$ such that $\kappa$ and $\widehat{\kappa}$ induce equivalent representations of $\P(\Lambda_E)\P_1(\Lambda)$. Let $\kappa'$ be a $\beta$-extension of $\eta_r$.  The restriction of $\kappa'$ to $\P(\Lambda_E)J^1_r$ and $\widehat{\kappa}$ differ by a character $\chi$ of $B_r=\P(\Lambda_E)/\P^1(\Lambda_E^r)$ which is trivial on its unipotent part and intertwined by the non-trivial Weyl group element $w$.    By the Bruhat decomposition $\M_r=\M(\Lambda_E^r)=B_r\cup B_rwB_r$, hence $\chi$ is intertwined by the whole of $\M_r$ and extends to a character of $\M_r$.  Hence $\kappa_r=\kappa\otimes \chi^{-1}$ is a $\beta$-extension of $\eta_r$ which is compatible with $\kappa$. By reduction modulo-$\ell$, as in the proof of Lemma \ref{compatbeta}, we have the corresponding statement in the $\ell$-modular setting.
\end{proof}

\subsubsection{$\kappa$-induction and restriction}

Fix $[\Lambda,n,0,\beta]$ a skew semisimple stratum in $A$, $\theta\in\mathcal{C}_{-}(\Lambda,\beta)$, $\eta$ the unique Heisenberg representation containing $\theta$ and $\kappa$ a $\beta$-extension of $\eta$.

Let $\sigma$ be an $R$-representation of $\M(\Lambda_E)$ and $\widetilde{\sigma}$ denote the inflation of $\sigma$ to $J$ by defining $J^1$ to act trivially.  Define \emph{$\kappa$-induction} $\I_{\kappa}:\mathfrak{R}_R(\M(\Lambda_E))\rightarrow \mathfrak{R}_R(G)$ by
\begin{align*}
\I_{\kappa}(\sigma)=\ind_{J}^G(\kappa\otimes \widetilde{\sigma}).
\end{align*}
This functor has a right adjoint, \emph{$\kappa$-restriction} $\R_{\kappa}:\mathfrak{R}_R(G)\rightarrow\mathfrak{R}_R(\M(\Lambda_E))$, defined by
\begin{align*}
\R_{\kappa}(\pi)=\Hom_{J^1}(\kappa,\pi),
\end{align*}
for  $\pi$ an $R$-representation of $G$, where the action of $\M(\Lambda_E)$ is given by: let $f \in\Hom_{J^1}(\kappa,\pi)$, $m\in \M(\Lambda_E)$ and $j\in J$ represent the coset $m\in J/J^1$, $m\cdot f=\pi(j)\circ f\circ \kappa(j^{-1})$.

In the level zero case, we have $J=\P(\Lambda)$ and we can choose $\kappa$ to be trivial, thus we have $\I_{\kappa}=\I_{\Lambda}$ and $\R_{\kappa}=\R_{\Lambda}$.  Hence $\kappa$-restriction and induction generalise parahoric restriction and induction.   Related to $[\Lambda,n,0,\beta]$ we also have functors of parahoric induction $\I^E_{\Lambda}:\mathfrak{R}_R(M(\Lambda_E))\rightarrow\mathfrak{R}_R(\GE)$ and parahoric restriction $\R^E_{\Lambda}:\mathfrak{R}_R(\GE)\rightarrow\mathfrak{R}_R(M(\Lambda_E)) $ obtained by considering $\Lambda$ as an $\mathfrak{o}_E$-lattice sequence.

\begin{thm}[{Kurinczuk--Stevens \cite{RKSS}}]\label{pl=lz}
Let $[\Lambda^i,n,0,\beta]$, $i=1,2$, be skew semisimple strata.  Let $\theta_1\in\mathcal{C}_{-}(\Lambda^1,\beta)$ and $\theta_2=\tau_{\Lambda^1,\Lambda^2,\beta}(\theta_1)$.  For $i=1,2$, let $\eta_i$ be Heisenberg extensions of $\theta_i$, $\kappa_i$ be compatible $\beta$-extensions of $\eta_i$ and $\sigma$ an $R$-representation of $\M(\Lambda^1_E)$.  Then
\begin{align*}
\R_{\kappa_2}\circ\I_{\kappa_1}(\sigma)\simeq \R^E_{\Lambda^2}\circ \I^E_{\Lambda^1}(\sigma).
\end{align*}
\end{thm}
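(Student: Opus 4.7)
The plan is to expand $\R_{\kappa_2}\circ\I_{\kappa_1}(\sigma)$ by Frobenius reciprocity and Mackey, use the intertwining computation for Heisenberg representations to cut the double-coset sum down to representatives in $\GE$, and then invoke the compatibility of $\kappa_1,\kappa_2$ to match each local term with the corresponding summand that Lemma \ref{lemma} produces for $\R^E_{\Lambda^2}\circ\I^E_{\Lambda^1}(\sigma)$ inside $\GE$.

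Writing $J_i=J(\Lambda^i,\beta)$, I would first unwind the definitions to
\begin{align*}
\R_{\kappa_2}\circ\I_{\kappa_1}(\sigma)=\Hom_{J_2^1}\bigl(\kappa_2,\,\ind_{J_1}^G(\kappa_1\otimes\widetilde{\sigma})\bigr),
\end{align*}
and apply Mackey to $\Res_{J_2}^G\circ\ind_{J_1}^G$ to obtain
\begin{align*}
\R_{\kappa_2}\circ\I_{\kappa_1}(\sigma)\simeq\bigoplus_{g\in J_2\backslash G/J_1}\Hom_{J_2^1}\Bigl(\kappa_2,\,\ind_{J_2\cap {}^gJ_1}^{J_2}\bigl({}^g(\kappa_1\otimes\widetilde{\sigma})\bigr)\Bigr).
\end{align*}
Since $\kappa_i\mid_{J_i^1}=\eta_i$ and $I_G(\eta_i)=J_i^1\GE J_i^1$ by Stevens \cite[Proposition 3.31]{St08}, such a summand vanishes unless $g\in J_2^1\GE J_1^1$. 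Using $J_i=\P(\Lambda^i_E)J_i^1$, the inclusion $\GE\hookrightarrow G$ induces a bijection $\P(\Lambda^2_E)\backslash\GE/\P(\Lambda^1_E)\;\longleftrightarrow\;J_2\backslash J_2^1\GE J_1^1/J_1$, so I can take representatives from the distinguished set $D^E_{2,1}\subset\GE$.

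For each $n\in D^E_{2,1}$, the main local task is to identify the corresponding summand with
\begin{align*}
i^{\M(\Lambda^2_E)}_{P_{\Lambda^2_E,n\Lambda^1_E}}\Bigl(r^{\M(\Lambda^1_E)}_{P_{\Lambda^1_E,n^{-1}\Lambda^2_E}}(\sigma)\Bigr)^n.
\end{align*}
To do this, I would use the Iwahori-style decomposition of $J_2\cap {}^nJ_1$ arising from the decomposition of $\P(\Lambda^2_E)\cap {}^n\P(\Lambda^1_E)$ into Levi and unipotent pieces (Morris \cite[Corollary 3.20]{Morris93}) together with the pro-$p$ groups $J_2^1\cap {}^nJ_1^1$. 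The compatibility of $\kappa_1$ and $\kappa_2$ ensures that $\kappa_2$ and ${}^n\kappa_1$ agree on the pro-$p$ part of this intersection (up to transfer of characters that cancel after forming $\Hom$), so that taking $\Hom_{J_2^1}(\kappa_2,-)$ absorbs all of the Heisenberg data and returns exactly the parabolic restriction of ${}^n\sigma$ to $P_{\Lambda^1_E,n^{-1}\Lambda^2_E}$ followed by parabolic induction to $\M(\Lambda^2_E)$.

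The hard part will be carrying out this local computation rigorously: one must verify the Iwahori decomposition of $J_2\cap {}^nJ_1$ relative to the parabolic subgroup $P_{\Lambda^2_E,n\Lambda^1_E}\subset\M(\Lambda^2_E)$ and its pro-$p$ radical, and then use compatibility of the $\beta$-extensions to reduce the calculation to one purely on the finite reductive quotients $\M(\Lambda^i_E)$, where it is governed by transitivity of the Jacquet functor and parabolic induction. Once the local identification is in place, summing over $n\in D^E_{2,1}$ and comparing with Lemma \ref{lemma} applied inside $\GE$ yields the claimed isomorphism.
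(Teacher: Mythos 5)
The paper does not prove Theorem \ref{pl=lz} itself; it cites Kurinczuk--Stevens \cite{RKSS} and summarises the proof in one sentence as a combination of Mackey theory, Bushnell--Kutzko \cite[Proposition 5.3.2]{BK93}-style isomorphisms, and the computation of the cross-intertwining spaces $I_g(\eta_1,\eta_2)$, which are one-dimensional for $g\in\GE$ and zero otherwise. Your proposal follows exactly this route: Mackey decomposition of $\ind_{J_1}^G$, intertwining support restriction to $\GE$-cosets, and local matching of each summand with the corresponding term of Lemma \ref{lemma} inside $\GE$ via Iwahori decompositions.

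One small imprecision worth fixing: you invoke the \emph{self}-intertwining $I_G(\eta_i)=J_i^1\GE J_i^1$ from Stevens \cite[Proposition 3.31]{St08}, but since $\eta_1$ and $\eta_2$ are attached to different lattice sequences the relevant input is the \emph{cross}-intertwining $I_g(\eta_1,\eta_2)$ for $\theta_2=\tau_{\Lambda^1,\Lambda^2,\beta}(\theta_1)$, which is precisely what the paper states; this is what justifies both the vanishing of terms away from $J_2^1\GE J_1^1$ and the one-dimensionality needed for the BK-style identification. You also need the asserted bijection $\P(\Lambda^2_E)\backslash\GE/\P(\Lambda^1_E)\leftrightarrow J_2\backslash J_2^1\GE J_1^1/J_1$ to hold (not merely be a surjection), and for the local step the Iwahori decomposition of $J_2\cap {}^nJ_1$ together with the trivial action of $\kappa_2$ and ${}^n\kappa_1$ on the relevant unipotent pieces; these are precisely the content of the \cite[Proposition 5.3.2]{BK93}-type isomorphisms the paper alludes to, so your ``hard part'' is the correct place to put the remaining work.
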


The proof of Theorem \ref{pl=lz} in {Kurinczuk--Stevens \cite{RKSS}} follows from a combination of Mackey theory, isomorphisms defined as in Bushnell--Kutzko \cite[Proposition $5.3.2$]{BK93}, and the computation of the intertwining spaces $\I_g(\eta_1,\eta_2)$ for $g\in G$ which are one-dimensional if $g\in \GE$ and zero otherwise.

\begin{lemma}\label{star}
Suppose we are in the setting of Lemma \ref{compatbeta} with compatible $\beta$-extensions $\kappa$ and $\kappa_{m}$.  Then, for all $\sigma\in\mathfrak{R}_R(\M(\Lambda_E))$, we have
\[\I_{\kappa}(\sigma)\simeq \ind_{J_{m}^1\P(\Lambda_E)}^G(\kappa_{m}\otimes \sigma)\]
and, for all $R$-representations $\pi$ of $G$, we have $\R_{\kappa}(\pi)\simeq\Hom_{J^1_{m}\P_1(\Lambda_E)}(\kappa_{m},\pi)$.
\end{lemma}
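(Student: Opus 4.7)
The plan is to establish the induction isomorphism first and then derive the right-adjoint statement by uniqueness of adjoints. For the induction formula I would push both compact inductions through the common intermediate group $H=\P(\Lambda_E)\P_1(\Lambda)$, use a standard projection formula to separate the $\sigma$-factor, and then invoke the compatibility of $\kappa$ with $\kappa_m$ from Lemma \ref{compatbeta}.

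More precisely, set $L = J^1_m\P(\Lambda_E)$. Both $J = \P(\Lambda_E)J^1$ and $L$ sit inside $H$; that $L\subseteq H$ is implicit in Lemma \ref{compatbeta}, where the induced representation $\ind_L^H\Res^{J_m}_L(\kappa_m)$ is formed. Since $H/\P_1(\Lambda)\simeq \P(\Lambda_E)/\P_1(\Lambda_E)=\M(\Lambda_E)$, the representation $\sigma$ inflates canonically to a representation $\sigma_H$ of $H$, and its restrictions to $J$ and to $L$ recover precisely the inflations of $\sigma$ used on the two sides of the claimed isomorphism. Combining transitivity of induction through $H$ with the standard projection formula
\[\ind_K^H(\rho\otimes \tau|_K)\simeq \ind_K^H(\rho)\otimes \tau, \qquad K\leqslant H,\]
applied with $K\in\{J,L\}$ and $\tau=\sigma_H$, reduces the desired isomorphism to
\[\ind_J^H\kappa\;\simeq\;\ind_L^H\Res^{J_m}_L(\kappa_m),\]
which is exactly the defining relation of compatibility supplied by Lemma \ref{compatbeta}.

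For the second statement, the first part gives a natural isomorphism of functors $\I_\kappa\simeq \ind_L^G(\kappa_m|_L\otimes(-))$, with the second tensor factor inflated along $L\to L/(J^1_m\P_1(\Lambda_E))\simeq \M(\Lambda_E)$. By uniqueness of right adjoints it then suffices to identify the right adjoint of this latter functor. A direct computation using Frobenius reciprocity for compact induction, the identity $(\kappa_m^\vee\otimes\pi)^N=\Hom_N(\kappa_m,\pi)$ (valid as $\kappa_m$ is finite dimensional) taken with $N=J^1_m\P_1(\Lambda_E)$, and the quotient $L/N\simeq \M(\Lambda_E)$, exhibits this right adjoint as $\pi\mapsto \Hom_{J^1_m\P_1(\Lambda_E)}(\kappa_m,\pi)$, yielding the stated description of $\R_\kappa$.

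I expect the main obstacle to be subgroup bookkeeping: verifying $L\subseteq H$, that $\P(\Lambda_E)\cap \P_1(\Lambda)=\P_1(\Lambda_E)$, and that $L\cap \P_1(\Lambda)=J^1_m\P_1(\Lambda_E)$, so that each relevant quotient collapses to $\M(\Lambda_E)$ and $\sigma_H|_J$, $\sigma_H|_L$ really do coincide with the natural inflations appearing in the statement. These intersections follow from the structure of the filtration subgroups $H^\bullet$, $J^\bullet$, $\P_\bullet$ attached to the stratum, but must be recorded carefully for the projection formula to apply unambiguously; once they are in place the argument is essentially formal given Lemma \ref{compatbeta}.
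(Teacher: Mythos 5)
Your proposal is correct and takes essentially the same route as the paper: the paper's two-sentence proof ("transitivity of induction and Lemma \ref{compatbeta}" for the first isomorphism, "reciprocity" for the second) compresses exactly the steps you spell out, with the projection formula through $H=\P(\Lambda_E)\P_1(\Lambda)$ implicit in "transitivity" and uniqueness of right adjoints implicit in "reciprocity". The subgroup bookkeeping you flag ($L\subseteq H$, $\P(\Lambda_E)\cap\P_1(\Lambda)=\P_1(\Lambda_E)$, and the quotient identifications with $\M(\Lambda_E)$) is indeed needed and is the content the paper takes for granted from Lemma \ref{compatbeta} and the definitions of the filtration subgroups.
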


\begin{proof}
By transitivity of induction and Lemma \ref{compatbeta} we have $\I_{\kappa}(\sigma)\simeq \ind_{J_{m}^1\P(\Lambda_E)}^G(\kappa_{m}\otimes \sigma)$.  By reciprocity, for $\pi$ an $R$-representation of $G$, we have $\R_{\kappa}(\pi)\simeq\Hom_{J^1_{m}\P_1(\Lambda_E)}(\kappa_{m},\pi)$.
\end{proof}

Define \emph{$\widetilde{\kappa}$-induction} $\I_{\widetilde{\kappa}}:\mathfrak{R}_R(\M(\Lambda_E))\rightarrow\mathfrak{R}_R(G)$ by $\I_{\widetilde{\kappa}}(\sigma)=\ind_J^G(\widetilde{\kappa}\otimes\sigma)$ for $\sigma$ an $R$-representation of $\M(\Lambda_E)$.  This functor has a right adjoint, \emph{$\widetilde{\kappa}$-restriction} $\R_{\widetilde{\kappa}}:\mathfrak{R}_R(G)\rightarrow \mathfrak{R}_R(\M(\Lambda_E))$, defined by $\R_{\widetilde{\kappa}}(\pi)=\Hom_{J^1}(\widetilde{\kappa},\pi)$ where the action of $\M(\Lambda_E)$ on $\R_{\widetilde{\kappa}}(\pi)$ is defined analogously to $\kappa$-restriction.   In fact, $\widetilde{\kappa}$ is a $-\beta$-extension for the semisimple character $\theta^{-1}$ for the semisimple stratum $[\Lambda,n,0,-\beta]$.

\begin{lemma}\label{contracommutes}
Let $\pi$ be an $R$-representation of $G$ and $\sigma$ be an irreducible representation of $\M(\Lambda_E)$.  Then $\left(\R_{\kappa}(\pi)\right)^{\sim}\simeq \R_{\widetilde{\kappa}}(\widetilde{\pi})$ and if $\I_{\kappa}(\sigma)$ is irreducible then $\I_{\kappa}(\sigma)^{\sim}\simeq \I_{\widetilde{\kappa}}(\widetilde{\sigma})$.
\end{lemma}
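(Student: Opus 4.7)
\emph{Part 1.} Since $\kappa$ has a finite-dimensional underlying space $W$, there is the standard identification of $\M(\Lambda_E)$-representations
\[
\R_{\kappa}(\pi) = \Hom_{J^1}(\kappa,\pi) \simeq (\widetilde{\kappa}\otimes \pi)^{J^1}.
\]
Because $J^1$ is pro-$p$ and $W$ finite-dimensional, $\kappa|_{J^1}$ factors through a finite $p$-group quotient $Q=J^1/U$, and any $J^1$-invariant vector in $\widetilde{\kappa}\otimes\pi$ already lies in $\widetilde{\kappa}\otimes\pi^{U}$; on this subspace the action is through $Q$. Since $\ell\neq p$, $|Q|$ is invertible in $R$, so for every $R[Q]$-module $X$ invariants coincide with coinvariants and $(X^{Q})^{\vee}\simeq (X^{\vee})^{Q}$. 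Applying this with $X=\widetilde{\kappa}\otimes\pi^{U}$, and using admissibility of irreducible smooth $R$-representations (Vign\'eras \cite[II 2.8]{Vig96}) to get $(\pi^{U})^{\vee}\simeq \widetilde{\pi}^{U}$, yields
\[
(\R_{\kappa}(\pi))^{\sim} \simeq (\kappa\otimes \widetilde{\pi}^{U})^{Q} \simeq \R_{\widetilde{\kappa}}(\widetilde{\pi}),
\]
and one checks the chain of isomorphisms is $\M(\Lambda_{E})$-equivariant.

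\emph{Part 2.} Define the natural bilinear pairing
\[
B: \I_{\kappa}(\sigma)\times \I_{\widetilde{\kappa}}(\widetilde{\sigma}) \to R, \qquad B(f,g) = \sum_{Jx\in J\backslash G}\langle f(x),g(x)\rangle,
\]
where the inner bracket is the canonical duality between $\kappa\otimes\sigma$ and $\widetilde{\kappa}\otimes\widetilde{\sigma}$. Joint $J$-equivariance of $f,g$ and $J$-invariance of the inner pairing imply each summand depends only on the coset $Jx$, finite support modulo $J$ of $f$ and $g$ makes the sum finite, and $G$-invariance of $B$ under right translation is immediate. Non-degeneracy is checked coset-by-coset: if $f(x)\neq 0$, pick $v^{\vee}\in \widetilde{\kappa}\otimes\widetilde{\sigma}$ pairing non-trivially with $f(x)$ and extend it by $J$-equivariance to an element of $\I_{\widetilde{\kappa}}(\widetilde{\sigma})$ supported on the single coset $Jx$. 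The pairing thus induces an injection $\I_{\widetilde{\kappa}}(\widetilde{\sigma}) \hookrightarrow \I_{\kappa}(\sigma)^{\sim}$. By hypothesis $\I_{\kappa}(\sigma)$ is irreducible, and combined with admissibility of irreducible $R$-representations this forces $\I_{\kappa}(\sigma)^{\sim}$ also to be irreducible, so the nonzero injection is an isomorphism.

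\emph{Main obstacle.} The substantive step is Part 1, where one must verify that the possibly infinite-dimensional tail of $\pi$ plays no role; this is precisely where the pro-$p$ structure of $J^{1}$ combined with $\ell\neq p$ is essential, reducing the question of dualising invariants to a statement about a finite group in which invariants and coinvariants coincide. Part 2 is then largely formal, the key point being that irreducibility of the induced representation forces the canonical embedding coming from the pairing to be surjective.
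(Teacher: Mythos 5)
Your proof is correct, but it takes a genuinely different route from the paper. For the first isomorphism $(\R_{\kappa}(\pi))^{\sim}\simeq\R_{\widetilde{\kappa}}(\widetilde{\pi})$, the paper cites Henniart--S\'echerre (their Proposition $2.6$) for the chain of vector space isomorphisms $\Hom_{J^1}(\kappa,\pi)^{\sim}\simeq\Hom_{J^1}(\pi,\kappa)\simeq\Hom_{J^1}(\widetilde{\kappa},\widetilde{\pi})$ and then checks $J/J^1$-equivariance; for the second, it simply invokes Vign\'eras \cite[I 8.4]{Vig96} together with admissibility. You instead unwind everything: in Part $1$ you reduce to the finite quotient $Q=J^1/U$ on which $\kappa|_{J^1}$ factors, using that $J^1$ is pro-$p$ and $\ell\neq p$ to make invariants, coinvariants and duals interchange; in Part $2$ you build the pairing on compact inductions by hand, check non-degeneracy coset by coset, and use irreducibility to promote the injection to an isomorphism. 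Both routes are valid; yours is more self-contained and makes the role of $\ell\neq p$ transparent, while the paper's is shorter by delegating to the two references. One small correction to your Part $1$: the appeal to ``admissibility of irreducible smooth $R$-representations'' is out of place there, since $\pi$ is not assumed irreducible in that part of the lemma. Fortunately it is also unnecessary: for any smooth $\pi$ and any compact open $U$ of pro-order invertible in $R$ one has $\widetilde{\pi}^{U}=(\pi^{*})^{U}=(\pi^{U})^{*}$, via the idempotent $e_U$ decomposing $\pi=\pi^{U}\oplus\ker(e_U)$, so the identity $(\pi^{U})^{\vee}\simeq\widetilde{\pi}^{U}$ you use holds without any finiteness hypothesis on $\pi^{U}$. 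You should also note explicitly that $U$ can be taken to be $\ker(\eta)$, which is normal in all of $J$ (not just $J^1$) because $\eta$ extends to $J$, so that $\pi^{U}$ is $J$-stable; this is what makes the $\M(\Lambda_E)$-equivariance of your chain of isomorphisms go through.
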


\begin{proof}
We have an isomorphism of vector spaces $\Hom_{J^1}(\kappa,\pi)^{\sim}\simeq \Hom_{J^1}(\pi,\kappa)\simeq \Hom_{J^1}(\widetilde{\kappa},\widetilde{\pi})$ by Henniart--S\'echerre \cite[Proposition $2.6$]{HenniartSecherre} and checking the action of $J/J^1$ we have $\left(\R_{\kappa}(\pi)\right)^{\sim}\simeq \R_{\widetilde{\kappa}}(\widetilde{\pi})$.  If $\I_{\kappa}(\sigma)$ is irreducible then it is admissible and we have  $\I_{\kappa}(\sigma)^{\sim}\simeq \I_{\widetilde{\kappa}}(\widetilde{\sigma})$ by Vign\'eras \cite[I 8.4]{Vig96}.  
\end{proof}

Suppose $\P(\Lambda_E)$ is not maximal.  Let $\kappa_T=\Res^J_{T^0}(\kappa)$.   Define $\R_{\kappa_T,\Lambda}:\mathfrak{R}_R(T)\rightarrow\mathfrak{R}_R(\overline{T})$ by $\R_{\kappa_T,\Lambda}(\pi)=\Hom_{T^1}(\kappa_T,\pi)$.

\section{Exhaustion of cuspidal representations}

\subsection{Covers}
 Let $\lambda_T$ be an irreducible character of $T^0$. Suppose $J$ has Iwahori decomposition $J=(J\cap \overline{N})(J\cap T)(J\cap N)$ with respect to $B$ and define a character $\lambda$ of $J$ by $\lambda(j^{-}j_Tj^+)=\lambda_T(j_T)$, for $j^{-}\in (J\cap \overline{N})$, $j_T\in J\cap T$, and $j^{+}\in (J\cap N)$. 

\begin{lemma}\label{intchi}~
We have $w_x$ intertwines $\lambda$ if and only if $w_y$ intertwines $\lambda$.
\end{lemma}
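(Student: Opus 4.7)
The plan is to observe that $w_x$ and $w_y$ differ by an element of $T$, so they induce the same conjugation on $T$, and then to reduce the intertwining of $\lambda$ by $w$ to a condition that depends only on this action.

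A direct matrix calculation yields $w_xw_y=\diag(\varpi_F,1,\varpi_F^{-1})$, which lies in $T$ because $\varpi_F\in F_0^\times$ is fixed by the Galois involution. Since $T$ is abelian, conjugation by $w_x$ and by $w_y$ thus define the same automorphism of $T$; explicitly both send $\diag(x,y,\overline x^{-1})$ to $\diag(\overline x^{-1},y,x)$. In particular $\lambda_T^{w_x}=\lambda_T^{w_y}$ as characters of $T^0$. One also checks directly that both $w_x$ and $w_y$ swap $N$ and $\overline N$.

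Next I would reduce the intertwining condition to the torus. Fix $w\in\{w_x,w_y\}$ and $g\in J\cap w^{-1}Jw$, and write the Iwahori decomposition $g=g^-g_Tg^+$ in $J$, so that $\lambda(g)=\lambda_T(g_T)$. Since $w$ interchanges $N$ and $\overline N$, the product $wgw^{-1}=(wg^-w^{-1})(wg_Tw^{-1})(wg^+w^{-1})$ lies in $N\cdot T\cdot\overline N$, which is the \emph{opposite} Iwahori factorization of the element $wgw^{-1}\in J$. A character of $J$ trivial on both $J\cap N$ and $J\cap\overline N$ takes the same value on the $T$-part of either Iwahori factorization of a given element (the two $T$-parts generally differ, but $\lambda$ evaluated on the complementary unipotent pieces is trivial in either order), hence $\lambda(wgw^{-1})=\lambda_T(wg_Tw^{-1})=\lambda_T^w(g_T)$. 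Therefore $w$ intertwines $\lambda$ if and only if $\lambda_T=\lambda_T^w$ on $(J\cap T)\cap w^{-1}(J\cap T)w$.

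Combining these two steps concludes the proof: because $w_x$ and $w_y$ induce the same automorphism of $T$, both the subgroup $(J\cap T)\cap w^{-1}(J\cap T)w$ and the character $\lambda_T^w$ are independent of the choice of $w\in\{w_x,w_y\}$, so the two intertwining conditions coincide. The main obstacle is the second step, namely verifying that $\lambda(wgw^{-1})$ is computed by $\lambda_T$ applied to the $T$-part of the opposite factorization of $wgw^{-1}$; this uses in an essential way that $\lambda$ is a character (not just a function) trivial on both $J\cap N$ and $J\cap\overline N$, but once this is established the statement of the lemma is immediate from the $T$-equality $w_xw_y\in T$.
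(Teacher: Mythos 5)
Your proof is correct and follows essentially the same route as the paper's: both hinge on the Iwahori decomposition of $J$ together with the observation that $w_x$ and $w_y$ act identically on $T$ because $w_xw_y=\diag(\varpi_F,1,\varpi_F^{-1})\in T$. You are somewhat more explicit than the paper about the subtle point that conjugation by $w$ lands you in the \emph{opposite} Iwahori factorization $N\cdot T\cdot\overline N$ and that $\lambda$, being a character trivial on both unipotent pieces, evaluates the same on either $T$-part, which is exactly the gap the paper silently bridges when it writes $j'=j'_Nj'_Tj'_{\overline N}$ rather than in the standard order.
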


\begin{proof}~
Suppose $w_x\in I_G(\lambda)$.  Then, as $w_x$ normalises $T^0$, $w_x$ normalises $\Res^J_{T^0}(\lambda)$.  For all $t\in T^0$ we have $w_x tw_x=w_ytw_y$, hence $w_y$ normalises $\Res^J_{T^0}(\lambda)$.  Let $j\in J\cap w_y Jw_y$ such that $j=w_yj'w_y$.  Using the Iwahori decomposition of $J$ we have $j=j_{\overline{N}} j_T j_N$ and $j'=j'_N j'_T j'_{\overline{N}}$ with $j_N, j'_N$ upper triangular unipotent, $j_{\overline{N}}, j'_{\overline{N}}$ lower triangular unipotent and $j_T,j'_T$ in $T$.  Thus \[j=w_y j'w_y^{-1}=(w_yj'_{N} w_y)(w_yj'_Tw_y)(w_yj'_{\overline{N}}w_y)\]
and, by uniqueness of the Iwahori decomposition, $j_{\overline{N}}=w_yj'_{N} w_y$, $j_T=w_yj'_Tw_y$ and $j_{N}=w_yj'_{\overline{N}} w_y$. Therefore $w_y\in I_G(\lambda)$.
\end{proof}

In the $\ell$-adic case our construction of covers is a special case of the general results of Stevens \cite[Propositions $7.10$ and $7.13$]{St08}. Let $[\Lambda,n,0,\beta]$ be a skew semisimple stratum in $A$ such that $\P(\Lambda_E)$ is not a maximal parahoric subgroup of $\GE$.  Let $\theta\in\mathcal{C}_{-}(\Lambda,\beta)$, $\eta$ be the unique Heisenberg representation containing $\theta$, $\kappa$ a $\beta$-extension of $\eta$ and $\sigma\in\Irr_R(J/J^1)$. Let $\kappa_T=\Res^J_{T^0}(\kappa)$ and set $\lambda_T=\kappa_T\otimes\sigma$.  Then $\lambda=\kappa\otimes \sigma$ by {\emph{op.\,cit.}}, i.e. $\kappa\otimes\sigma$ is trivial on the unipotent parts of $J$ (note that, in the notation of \emph{op.\,cit.}, in our case we have $J=J_B$).

\begin{lemma}\label{plcovers}
Let $\lambda_T=\kappa_T\otimes\sigma$.  Then $(J,\lambda)$ is a $G$-cover of $(T^0, \lambda_T)$. \end{lemma}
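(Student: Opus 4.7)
The plan is to verify in turn the three defining conditions of a $G$-cover for $(J,\lambda)$ relative to $B = T\ltimes N$, with essentially all of the work concentrated on condition (3). Conditions (1) and (2) are already implicit in the setup: the Iwahori decomposition $J = (J\cap\overline{N})(J\cap T)(J\cap N)$ is one of the standard properties of the compact open subgroups attached to skew semisimple strata in Stevens \cite{St08} (and is already hypothesised in the paragraph preceding Lemma \ref{intchi}), while the identity $J\cap T = T^0$ follows from $J = J^1\P(\Lambda_E)$ together with $J^1\subseteq\P_1(\Lambda)$ being pro-$p$ and $\P(\Lambda_E)\cap T = T^0$. By construction, $\Res^J_{T^0}(\lambda) = \kappa_T\otimes\sigma = \lambda_T$, and the remark quoted just before the lemma says that $\kappa\otimes\sigma$ is trivial on the unipotent parts of $J$, which is exactly condition (2) on $\Res^J_{J\cap N}(\lambda)$ and $\Res^J_{J\cap\overline{N}}(\lambda)$.

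For condition (3) I would take as strongly positive element
\[ z = \diag(\varpi_F^N,\, 1,\, \varpi_F^{-N}) \in T_0 \subset T \]
for $N$ sufficiently large. A direct matrix computation of conjugation of $J\cap N$ by $z$ yields the contraction $z(J\cap N)z^{-1}\subseteq J\cap N$ and the opposite expansion on $J\cap\overline{N}$, and the stronger absorption property for arbitrary compact subgroups of $N$ and its opposite then holds once $N$ is large enough. The non-maximality of $\P(\Lambda_E)$ means that $\GE$ contains a non-compact factor isomorphic to $\U(1,1)(F/F_0)$ with split torus $T_0$, so $z\in T_0\subseteq\GE\subseteq J^1\GE J^1 = I_G(\eta)$. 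Since $\kappa$ has the same intertwining as $\eta$ in the maximal case, and in the non-maximal case via the compatibility of $\beta$-extensions recorded in Lemmas \ref{compatbeta} and \ref{betascompat}, and since $z$ centralises $T^0$ (so automatically intertwines $\sigma$), $z$ intertwines $\lambda = \kappa\otimes\sigma$. This yields a non-zero element $T_z\in\mathcal{H}_R(G,\lambda)$ supported on $Jz^{-1}J$.

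The main obstacle is showing $T_z$ is \emph{invertible} in $\mathcal{H}_R(G,\lambda)$. My plan is to descend from the $\ell$-adic case, where this is contained in Stevens \cite[Propositions $7.10$ and $7.13$]{St08}. The character $\sigma$ of $J/J^1\simeq T^0/T^1$ (a finite torus) lifts to $\widetilde{\sigma}\in\Irr_{\Ql}(J/J^1)$, and $\kappa$ is by construction (Lemma \ref{compatbeta}) the reduction modulo $\ell$ of an $\ell$-adic $\beta$-extension $\widetilde{\kappa}$, so $\lambda$ lifts to $\widetilde{\lambda} = \widetilde{\kappa}\otimes\widetilde{\sigma}$. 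Stevens' construction, applied to $\widetilde{\lambda}$, produces an integral element $\widetilde{T}_z\in\mathcal{H}_{\Zl}(G,\widetilde{\lambda})$ supported on $Jz^{-1}J$ together with a two-sided inverse supported on strongly positive translates; by the strong positivity of $z$ the convolution $\widetilde{T}_z\star\widetilde{T}_z^{-1}$ is supported on $J$. One then checks that, after normalising so that the value of $\widetilde{T}_z$ at $z^{-1}$ is a unit in $\Zl$, reduction modulo $\ell$ preserves both the support and the leading coefficient, so the image of $\widetilde{T}_z$ in $\mathcal{H}_R(G,\lambda)$ is the required invertible element.
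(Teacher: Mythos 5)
Your proposal takes the same strongly positive element as the paper (for $N=1$ your $z=\diag(\varpi_F,1,\varpi_F^{-1})$ is exactly $\zeta=w_xw_y$), and you correctly identify that conditions (1) and (2) of a $G$-cover are already supplied by Stevens' constructions and the remark preceding the lemma, so the work is in condition (3). However, your argument for invertibility has a genuine gap. Invertibility of a Hecke algebra element does \emph{not} automatically descend under reduction modulo $\ell$: even if the $\ell$-adic element $\widetilde{T}_z$ is invertible and integral, its inverse can a priori have coefficients outside $\Zl$, so the reduction of $\widetilde{T}_z$ may fail to be invertible. Your phrase ``one then checks that \dots\ reduction modulo $\ell$ preserves both the support and the leading coefficient'' names precisely the step that must be carried out and not merely asserted; establishing it requires an explicit computation that effectively reproduces the direct $\ell$-modular argument.

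The paper's proof works entirely over $R$, and the key structural ingredient you omit is the dichotomy on whether $w_x$ intertwines $\lambda$. In the regular case ($w_x\notin I_G(\lambda)$), the support of $\mathcal{H}(G,\lambda)$ is $\bigcup_n J\zeta^n J$, so $\supp(f_\zeta\star f_{\zeta^{-1}})=J$ and $f_\zeta\star f_{\zeta^{-1}}(1)=q^4$, a unit since $\ell\neq p$; hence $f_\zeta$ is invertible. In the non-regular case, by Lemma \ref{intchi} both $w_x$ and $w_y$ intertwine $\lambda$; one then uses the semisimple intersection property \cite[Lemma 2.6]{St08} to pin the support of $f_{w_i}\star f_{w_i}$ to $J\cup Jw_iJ$ with $f_{w_i}\star f_{w_i}(1)=[J:J\cap w_iJw_i]$ a power of $q$, giving invertibility of each $f_{w_i}$ separately, and then the Iwahori decomposition of $J$ plus \cite[Lemma 7.11]{St08} shows $Jw_xJw_yJ=Jw_xw_yJ$, so $f_{w_y}\star f_{w_x}$ is supported on the single double coset $J\zeta J$ and is an invertible element there. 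This case split is not cosmetic: in the non-regular case $\supp(f_\zeta\star f_{\zeta^{-1}})$ is not obviously $J$, and one genuinely needs to factor $f_\zeta$ as a product of the two invertible generators $f_{w_x}, f_{w_y}$. Your lift-and-reduce strategy, to be made rigorous, would end up redoing exactly these convolution computations, so the direct $R$-coefficient argument is both more economical and more transparent about where the hypothesis $\ell\neq p$ enters.
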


\begin{proof}
In the $\ell$-modular case, it remains to show that there exists a strongly $(B,J)$-positive element $z$ of the centre of $T$ such that $JzJ$ supports an invertible element of $\mathcal{H}(G,\lambda)$.  Let $\zeta=w_xw_y$.  For $g\in I_G(\lambda)$, because $\lambda$ is a character, $I_g(\lambda)\simeq R$ and there is a unique function in $f_g\in\mathcal{H}(G,J,\lambda)$ with support $J gJ$ such that $f_g(1)=1$.  We have $\zeta,\zeta^{-1}\in I_G(\lambda)$ hence $f_{\zeta},f_{\zeta^{-1}}\in \mathcal{H}(G,J,\lambda)$.  

Suppose that $w_x\not\in I_G(\lambda)$.   Then $f_{\zeta}\star f_{\zeta^{-1}}(1_G)=q^4.$
Furthermore, $ \supp(f_{\zeta}\star f_{\zeta^{-1}})=J$.  Hence $f_{\zeta}$ is an invertible element of $\mathcal{H}(G,\lambda)$.

Now, suppose that $w_x\in I_G(\lambda)$, then $w_y\in I_G(\lambda)$ by Lemma \ref{intchi}.   Hence $f_{w_x}, f_{w_y}\in\mathcal{H}(G,\lambda)$.  We have $f_{w_i}\star f_{w_i}(1_G)=[J:J\cap w_iJw_i]$ is a power of $q$, $i=1,2$.  By \cite[Lemma $5.12$]{St08}, $I_G(\eta)=J\GE J$, thus the support of $\mathcal{H}(G,\lambda)$ is contained in $J\GE J$.  Hence, $\supp(f_{w_i}\star f_{w_i})\subseteq (J\cup Jw_iJ)\cap J \GE  J=J((\P(\Lambda)\cup \P(\Lambda)w_i\P(\Lambda))\cap \GE )J$ by Stevens \cite[Lemma 2.6]{St08}.  Therefore $\supp(f_{w_i}\star f_{w_i})=J\cup Jw_i J$ and $f_{w_i}$, $i=1,2$, are invertible elements of $\mathcal{H}(G,\lambda)$. By Stevens \cite[Lemma $7.11$]{St08} we have $(J\cap N)^{w_x}\subseteq J\cap N$ and $(J\cap \overline{N})^{w_y}\subseteq J\cap \overline{N}$.  By the Iwahori decomposition of $J$,
\[Jw_xJw_yJ=J(w_x(J\cap\overline{N})w_x)w_xw_y(w_y(J\cap T)w_y)(w_y(J\cap N)w_y)J= Jw_xw_yJ.\]
Hence $f_{w_y}\star f_{w_x}$ is an invertible element of $\mathcal{H}(G,\lambda)$ supported on the single double coset $J\zeta J$.
\end{proof}

\subsection{Cuspidal representations}

The following theorem addresses the construction of all irreducible cuspidal $\ell$-modular and $\ell$-adic representations of $G$.
\begin{thm}\label{plcusps}
~
\begin{enumerate}
\item   Let $[\Lambda,n,0,\beta]$ be a skew semisimple stratum in $A$, $\theta\in\mathcal{C}_{-}(\beta,\Lambda)$, $\eta$ the unique Heisenberg representation containing $\theta$, $\kappa$ a $\beta$-extension of $\eta$ and $\sigma$ be an irreducible cuspidal representation of $\M(\Lambda_E)$.  Then $\I_{\kappa}(\sigma)$ is quasi-projective.  Furthermore, if $\P(\Lambda_E)$ is a maximal parahoric subgroup of $\GE$ then  $\I_{\kappa}(\sigma)$ is irreducible and cuspidal.
\item Let $\pi$ be an irreducible cuspidal representation of $G$.  Then there exist a skew semisimple stratum $[\Lambda,n,0,\beta]$ with $\P(\Lambda_E)$ a maximal parahoric subgroup of $\GE$, $\theta\in\mathcal{C}_{-}(\beta,\Lambda)$,  a $\beta$-extension $\kappa$ of the unique Heisenberg representation $\eta$ which contains $\theta$ and an irreducible cuspidal representation $\sigma$ of $\M(\Lambda_E)$ such that $\pi\simeq \I_{\kappa}(\sigma).$
\end{enumerate}
\end{thm}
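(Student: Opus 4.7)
\emph{Part \textup{(1)}.} My plan is first to analyse $\End_G(\I_\kappa(\sigma))$ via Frobenius reciprocity and Theorem \ref{pl=lz}: by adjunction $\End_G(\I_\kappa(\sigma)) \simeq \Hom_{\M(\Lambda_E)}(\sigma, \R_\kappa \circ \I_\kappa(\sigma))$, which Theorem \ref{pl=lz} identifies with $\Hom_{\M(\Lambda_E)}(\sigma, \R^E_\Lambda\circ\I^E_\Lambda(\sigma))$. Quasi-projectivity holds in general: in the maximal case it will be immediate from irreducibility (proved below), while in the non-maximal case it follows from the cover structure of Lemma \ref{plcovers} via the regular action of the Hecke algebra $\mathcal{H}(G,\kappa\otimes\widetilde\sigma)$ on $M_{\kappa\otimes\widetilde\sigma}(\I_\kappa(\sigma))$. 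When $\P(\Lambda_E)$ is maximal, the analogue of Lemma \ref{lemma111} applied inside $\GE$ gives $\R^E_\Lambda\circ\I^E_\Lambda(\sigma) \simeq \sigma$, so $\End_G(\I_\kappa(\sigma)) = R$, and together with the standard level-zero irreducibility of $\I^E_\Lambda(\sigma)$ for cuspidal $\sigma$ this yields $\I_\kappa(\sigma)$ irreducible. Cuspidality then follows by computing $r^G_B(\I_\kappa(\sigma))$: a Mackey analysis of the Jacquet module of a compactly induced representation from the compact open $J$, combined with the finite-group cuspidality of $\sigma$ on $\M(\Lambda_E)$, forces the vanishing.

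\emph{Part \textup{(2)}.} Let $\pi$ be irreducible cuspidal. By Theorem \ref{plss}, $\pi$ contains a skew semisimple stratum $[\Lambda, n, 0, \beta]$ and some $\theta \in \mathcal{C}_{-}(\Lambda, \beta)$. The unique Heisenberg extension $\eta$ of $\theta$ appears in $\pi|_{J^1}$, and there is a $\beta$-extension $\kappa$ and an irreducible $\sigma$ of $\M(\Lambda_E)$ such that $\pi$ contains $\kappa \otimes \widetilde\sigma$. The main obstacle is to arrange that $\P(\Lambda_E)$ is maximal and that $\sigma$ is cuspidal. If $\P(\Lambda_E)$ were non-maximal, Lemma \ref{plcovers} would exhibit $(J, \kappa\otimes\widetilde\sigma)$ as a $G$-cover of $(T^0, \lambda_T)$, and Theorem \ref{jacquetcommutes} would then transport the non-zero $\mathcal{H}(G, \kappa\otimes\widetilde\sigma)$-module $M_{\kappa\otimes\widetilde\sigma}(\pi)$ to a non-zero $\mathcal{H}(T, \lambda_T)$-module $M_{\lambda_T}(r^G_B(\pi))$, contradicting cuspidality of $\pi$; so $\P(\Lambda_E)$ must be maximal. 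A parallel cover argument internal to $\M(\Lambda_E)$, at the level of the finite reductive quotient, rules out $\sigma$ being non-cuspidal. With these reductions in hand, part \textup{(1)} makes $\I_\kappa(\sigma)$ irreducible and cuspidal, and since $\pi$ is an irreducible quotient of $\I_\kappa(\sigma)$ we conclude $\pi \simeq \I_\kappa(\sigma)$.
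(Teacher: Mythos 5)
Your high-level architecture matches the paper's — exhaustion via Theorem~\ref{plss}, a covers argument to force $\P(\Lambda_E)$ maximal and $\sigma$ cuspidal, and irreducibility via $\R_\kappa\circ\I_\kappa(\sigma)\simeq\sigma$ — but two of the key steps are not correct as stated.

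In Part (1), your irreducibility argument does not go through. You deduce $\End_G(\I_\kappa(\sigma))\simeq R$ via adjunction and Theorem~\ref{pl=lz}, and then claim that ``together with the standard level-zero irreducibility of $\I^E_\Lambda(\sigma)$ for cuspidal $\sigma$ this yields $\I_\kappa(\sigma)$ irreducible.'' Neither ingredient suffices: $\End_G(V)\simeq R$ does not imply $V$ irreducible (a non-split extension of non-isomorphic irreducibles with no maps between them also has scalar endomorphisms), and irreducibility of $\I^E_\Lambda(\sigma)$ is a statement about the group $\GE$; transporting it to $G$ is precisely what needs proof, not a black box. Worse, your plan has the logical order backwards: you propose to obtain quasi-projectivity in the maximal case ``immediate from irreducibility (proved below),'' but the paper's irreducibility argument (following Proposition~7.1 of Vign\'eras~\cite{vigneras}) \emph{uses} quasi-projectivity as an input — one establishes quasi-projectivity uniformly first (adapting Proposition~6.1 of \cite{vigneras}), then combines it with $\R_\kappa\circ\I_\kappa(\sigma)\simeq\sigma$ (via Theorem~\ref{pl=lz} and Lemma~\ref{lemma111}) and Theorem~\ref{qplemma} to force irreducibility. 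Your proposed ordering is circular.

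In Part (2), the non-maximal case is handled correctly via Lemma~\ref{plcovers} and Theorem~\ref{jacquetcommutes}, but the phrase ``a parallel cover argument internal to $\M(\Lambda_E)$'' does not describe, and would not substitute for, the actual reduction when $\P(\Lambda_E)$ is maximal and $\sigma$ is not cuspidal. The paper's argument is in $G$, not in $\M(\Lambda_E)$: if $r^{\M(\Lambda_E)}_{B(\Lambda_E)}(\sigma)\neq 0$, one identifies $r^{\M(\Lambda_E)}_{B(\Lambda_E)}(\R_\kappa(\pi))$ with $\Hom_{\P_1(\Lambda'_E)J^1}(\kappa,\pi)$ and, via Lemma~\ref{star} and the compatibility of $\beta$-extensions (Lemma~\ref{compatbeta}), concludes that $\pi$ also contains a type attached to a non-maximal $\P(\Lambda'_E)$; only then does the covers/Jacquet argument apply. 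You would need this transfer of semisimple characters and compatible $\beta$-extensions across parahorics to make your sketch precise — a Jacquet-module computation carried out purely in the finite group $\M(\Lambda_E)$ does not by itself produce a Jacquet module of $\pi$ in $G$.
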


\begin{proof}~
\begin{enumerate}
\item Quasi-projectivity follows \emph{mutatis mutandis} the proof given in Vign\'eras \cite[Proposition $6.1$]{vigneras}. So suppose $\P(\Lambda_E)$ is a maximal parahoric subgroup of $\GE$. By Theorem \ref{pl=lz} and Lemma \ref{lemma111} we have 
\begin{align*}\R_{\kappa}\circ \I_{\kappa}(\sigma)\simeq \R^E_{\Lambda}\circ\I^E_{\Lambda}(\sigma)\simeq \sigma.\end{align*}
The proof of irreducibility follows \emph{mutatis mutandis} the proof given in Vign\'eras \cite[Proposition $7.1$]{vigneras}. 
\item 
By Theorem \ref{plss}, $\pi$ contains a skew semisimple stratum $[\Lambda,n,0,\beta]$.    Suppose $\theta\in\mathcal{C}_{-}(\Lambda,\beta)$ is a skew semisimple character which $\pi$ contains. Let $\kappa$ be a $\beta$-extension of the unique Heisenberg representation $\eta$ which contains $\theta$.  Then $\pi$ contains $\kappa\otimes \sigma$ for some $\sigma\in\Irr_R(\M(\Lambda_E))$.  We show that we may assume that $\sigma$ is cuspidal.  If $\P(\Lambda_E)$ is not maximal then $\sigma$ is cuspidal, so we can suppose that $\P(\Lambda_E)$ is maximal.  Let $B(\Lambda_E)$ be the standard Borel subgroup of $\M(\Lambda_E)$ and $\P(\Lambda'_E)$ the preimage of $B(\Lambda_E)$ under the projection map.  Suppose that $r^{\M(\Lambda_E)}_{B(\Lambda_E)}(\sigma)\neq 0$.  Then, as $\pi$ contains $\sigma$, $r^{\M(\Lambda_E)}_{B(\Lambda_E)}(\R_{\kappa}(\pi))\neq 0$. We have
 \begin{align*}
 r^{\M(\Lambda_E)}_{B(\Lambda_E)}(\R_{\kappa}(\pi))&\simeq \Hom_{J^1}(\kappa,\pi)^{\P_1(\Lambda'_E)J^1/J^1}\\
 &\simeq \Hom_{\P_1(\Lambda'_E)J^1}(\kappa,\pi)
 \end{align*}
 which, by Lemma \ref{star}, implies that $\R_{\kappa',\Lambda'}(\pi)\neq 0$ where $\kappa'$ is the unique $\beta$-extension containing $\tau_{\Lambda,\Lambda',\beta}(\theta)$ compatible with $\kappa$.  Hence $\pi$ contains a skew semisimple stratum $[\Lambda',n,0,\beta]$ such that $\P(\Lambda'_E)$ is not maximal and thus contains $\kappa'\otimes\sigma'$ with $\sigma'$ a cuspidal representation of $\M(\Lambda'_E)$.  By Theorem \ref{jacquetcommutes} and Lemma \ref{plcovers} if  $\pi$ contains a skew semisimple stratum $[\Lambda,n,0,\beta]$ such that $\P(\Lambda_E)$ is not a maximal parahoric subgroup of $\GE$ then $\pi$ is not cuspidal.  Therefore $\P(\Lambda_E)$ is maximal and $\sigma$ is cuspidal.\end{enumerate}
\end{proof}

For level zero representations we can refine the exhaustive list of irreducible cuspidal representations given in Theorem \ref{plcusps} into a classification.

\begin{thm}
For $i=1,2$, let $\P(\Lambda_i)$ be maximal parahoric subgroups of $G$ and $\sigma_i$ be an irreducible cuspidal representation of $\M(\Lambda_i)$.  If $\Hom_G(\I_{\Lambda_1}(\sigma_1),\I_{\Lambda_2}(\sigma_2))\neq 0$ then $(\P(\Lambda_1),\sigma_1)$ and $(\P(\Lambda_2),\sigma_2)$ are conjugate.
\end{thm}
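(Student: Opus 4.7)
The plan is to reduce the claim to Lemma \ref{lemma111} via Frobenius reciprocity. Since $\I_{\Lambda}$ is compact induction (inflation-then-induction) and $\R_{\Lambda}$ is $\P_1(\Lambda)$-invariants, the standard Frobenius reciprocity gives an adjunction
\begin{align*}
\Hom_G(\I_{\Lambda_1}(\sigma_1),\I_{\Lambda_2}(\sigma_2)) \simeq \Hom_{\M(\Lambda_1)}\bigl(\sigma_1,\R_{\Lambda_1}\circ\I_{\Lambda_2}(\sigma_2)\bigr).
\end{align*}
So first I would invoke this adjunction to rewrite the Hom space on the right-hand side as a Hom space of finite-dimensional representations of the finite reductive group $\M(\Lambda_1)$.

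Next I would apply Lemma \ref{lemma111} to the right-hand side. Since $\P(\Lambda_2)$ is maximal and $\sigma_2$ is irreducible cuspidal, the lemma tells us that $\R_{\Lambda_1}\circ\I_{\Lambda_2}(\sigma_2)$ vanishes unless $\P(\Lambda_1)$ is $G$-conjugate to $\P(\Lambda_2)$. Thus the hypothesis $\Hom_G(\I_{\Lambda_1}(\sigma_1),\I_{\Lambda_2}(\sigma_2))\neq 0$ immediately forces $\P(\Lambda_1)$ and $\P(\Lambda_2)$ to be conjugate in $G$. In the conjugate case the lemma further identifies $\R_{\Lambda_1}\circ\I_{\Lambda_2}(\sigma_2)$ with $\sigma_2$, transported along conjugation by an element $g\in G$ with $g\P(\Lambda_2)g^{-1}=\P(\Lambda_1)$, i.e.\ with $\sigma_2^g\in\Irr_R(\M(\Lambda_1))$.

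Finally, since $\sigma_1$ and $\sigma_2^g$ are both irreducible representations of $\M(\Lambda_1)$, the fact that $\Hom_{\M(\Lambda_1)}(\sigma_1,\sigma_2^g)\neq 0$ forces $\sigma_1\simeq \sigma_2^g$ by Schur's lemma, and hence $(\P(\Lambda_1),\sigma_1)$ and $(\P(\Lambda_2),\sigma_2)$ lie in the same $G$-conjugacy class. There is no serious obstacle here: the argument is purely formal once Lemma \ref{lemma111} is in hand, and the only subtlety is tracking the $g$-twist in the conjugate case, which is harmless because $\sigma_1$ and $\sigma_2^g$ are both irreducible.
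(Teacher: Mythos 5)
Your proof is correct and follows the same essential route as the paper's: reduce via adjunction to a $\Hom$ space over $\M(\Lambda_1)$, use the Mackey-type decomposition of $\R_{\Lambda_1}\circ\I_{\Lambda_2}$ together with cuspidality and maximality to kill everything unless the two parahorics are conjugate, then apply Schur's lemma to match the $\sigma_i$. The only meaningful difference is in packaging. You cite Lemma \ref{lemma111} as a black box for both the vanishing and the identification of $\R_{\Lambda_1}\circ\I_{\Lambda_2}(\sigma_2)$ as a single conjugate $\sigma_2^g$. The paper instead applies Lemma \ref{lemma} directly, expands the $\Hom$ space as a sum over distinguished double coset representatives $n\in D_{1,2}$, and then uses cuspidality of \emph{both} $\sigma_1$ and $\sigma_2$ together with the geometry of the Bruhat--Tits building (two non-conjugate maximal parahorics can only intersect in the stabiliser of an edge, hence cannot surject onto either $\M(\Lambda_i)$) to show that a contributing $n$ must satisfy $\P(\Lambda_1)=\P(n\Lambda_2)$. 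This explicit route is more careful about exactly which double cosets survive: the stated proof of Lemma \ref{lemma111} contains the claim that $\P(\Lambda_1)\backslash G/\P(\Lambda_2)\simeq 1$ when the parahorics are conjugate, which is stronger than what is true (a maximal parahoric is not normal, so the double coset space is infinite); what is actually true is that at most one double coset gives a nonzero term after applying cuspidality. Your argument is not damaged by this, because all you really need is (a) vanishing in the non-conjugate case and (b) that in the conjugate case every irreducible quotient of $\R_{\Lambda_1}\circ\I_{\Lambda_2}(\sigma_2)$ is some $\sigma_2^g$, and (b) still follows from Lemma \ref{lemma} since every surviving term is a conjugate of $\sigma_2$. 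So your proof is valid, but the paper's version, by running the Mackey filtration by hand, gives a cleaner justification for exactly the step you are relying on.
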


\begin{proof}
By reciprocity and Lemma \ref{lemma},
\[\Hom_G\left(\I_{\Lambda_1}(\sigma_1),\I_{\Lambda_2}(\sigma_2)\right)\simeq\bigoplus_{n\in D_{1,2}}\Hom_{\M(\Lambda_1)}\left(\sigma_1,{i}_{ P_{\Lambda_1,n\Lambda_2}}^{\M(\Lambda_1)} {\left(r^{\M(\Lambda_2)}_{P_{\Lambda_2,n^{-1}\Lambda_1}}(\sigma_2)\right)^n}\right).\]
Hence \[\Hom_G\left(\I_{\Lambda_1}(\sigma_1),\I_{\Lambda_2}(\sigma_2)\right)\neq 0\] if and only if there exists $n\in D_{1,2}$ such that 
\[\Hom_{\M(\Lambda_1)}\left(\sigma_1,{i}_{ P_{\Lambda_1,n\Lambda_2}}^{\M(\Lambda_1)} {\left(r^{\M(\Lambda_2)}_{P_{\Lambda_2,n^{-1}\Lambda_1}}(\sigma_2)\right)^n}\right)\neq 0.\]
Assume there exists such an element $n$. 
By cuspidality of $\sigma_2$, $P_{\Lambda_2,n^{-1}\Lambda_1}=\M(\Lambda_2)$; hence $\P_1(\Lambda_2)(\P(\Lambda_2)\cap \P(n^{-1}\Lambda_1))/\P_1(\Lambda_2)= M(\Lambda_2)$.  By cuspidality of $\sigma_1$, $P_{\Lambda_1,n\Lambda_2}=\M(\Lambda_1)$; hence $\P_1(\Lambda_1)(\P(\Lambda_1)\cap \P(n\Lambda_2))/\P_1(\Lambda_1)= \M(\Lambda_1)$.  If $\P(\Lambda_1)$ and $\P(\Lambda_2)$ are not conjugate then for all $g\in G$, in particular $n\in D_{1,2}$, the group $\P(\Lambda_1)\cap \P(g\Lambda_2)$ must stabilise an edge in the building and hence is not maximal.  Thus it cannot surject onto either $\M(\Lambda_1)$ or $\M(\Lambda_2)$.  Hence there exists $n\in D_{1,2}$ such that $\P(\Lambda_1)=\P(n\Lambda_2)$ and 
\[\Hom_{\M(\Lambda_1)}\left(\sigma_1,\sigma^n_{2}\right)\neq \{0\},\]
i.e. $(\P(\Lambda_1),\sigma_1)$ and $(\P(\Lambda_2),\sigma_2)$ are conjugate.
\end{proof}

\begin{remark}\label{notlift}
Let $\ell\mid (q^2-q+1)$.  The irreducible cuspidal $\ell$-modular representations $\I_{\Lambda_x}(\overline{\tau}^+(\overline{\chi}))$ do not lift.   A lift must necessarily be cuspidal as the Jacquet functor commutes with reduction modulo-$\ell$.  However, by Theorem \ref{plcusps}, all $\ell$-adic level zero irreducible cuspidal representations are of the form $\I_{\Lambda_x}(\sigma_x)$ or $\I_{\Lambda_y}(\sigma_y)$ with $\sigma_x$ (resp. $\sigma_y$) an irreducible cuspidal $\ell$-adic representation of $\M(\Lambda_x)$ (resp. $\M(\Lambda_y)$).  Furthermore, $r_{\ell}(\I_{\Lambda_w}(\sigma_w))=\I_{\Lambda_w}(r_{\ell}(\sigma_w))$  as compact induction commutes with reduction modulo-$\ell$, for $w\in\{x,y\}$.  Hence, by Section \ref{u3finitecusplmod}, $\I_{\Lambda_x}(\overline{\tau}^+(\overline{\chi}))$ does not lift, but does appear in the reduction modulo-$\ell$ of $\I_{\Lambda_x}(\sigma(\psi))$ where 
$r_{\ell}(\I_{\Lambda_x}(\tau(\psi))=\I_{\Lambda_x}(\overline{\nu}(\overline{\chi}))\oplus\I_{\Lambda_x}(\overline{\tau}^+(\overline{\chi}))$.
\end{remark}

\section{Parabolically induced representations}
Let $\omega_{F/F_0}$ be the unique character of $F_0^\times$ associated to $F/F_0$ by local class field theory.  That is, $\omega_{F/F_0}$ is defined by $\omega_{F/F_0}\mid_{\mathfrak{o}_{F_0}^\times}=1$ and $\omega_{F/F_0}(\varpi_F)=-1$.  All extensions of $\omega_{F/F_0}$ to $F^\times$ take values in $\Zl^\times$, hence are integral.  Let $\chi_1$ be a character of $F^\times$ and $\chi_2$ be a character of $F^1$.   Let $\delta_B$ be the character of $T$ given by $\delta_B(\diag(x,y,\overline{x}^{-1}))=\left|x\right|_{F}^{4}$, i.e. the character $\chi_1(x)=|x|^4_F$, which we also think of as a character of $F^\times$.  Because the image of $\delta_B$ is contained in $\Zl^\times$, $\delta_B$ is integral.  If $q^2\equiv 1\bmod{\ell}$ then $\delta_B$ is trivial.  

Let $\chi$ be the character of $T$ defined by
\[\chi\left(\diag(x,y,\overline{x}^{-1})\right)=\chi_1(x)\chi_2(x\overline{x}^{-1}y)\]
which is well defined because $x\mapsto x\overline{x}^{-1}$ is a surjective map $F^\times\rightarrow F^1$.  Every character of $T$ appears in this way; we can recover $\chi_1$ and $\chi_2$ from $\chi$
\[\chi_1(x)=\chi(\diag(x,\overline{x}/x,\overline{x}^{-1})),\quad \chi_2(y)=\chi(\diag(1,y,1)).\]
The character $\chi_2$ factors through the determinant and
\[i_B^G(\chi)\simeq i_B^G (\chi_1)(\chi_2\circ\det)\]
where $\chi_1$ is considered the character $\chi_1(\diag(x,y,\overline{x}^{-1}))=\chi_1(x)$ of $T$.  Hence the reducibility of $i_B^G(\chi)$ is completely determined by that of $i_B^G (\chi_1)$.  The character $\chi$ is not regular if $\chi_1(x)=\chi_1(\overline{x})^{-1}$ which occurs if and only if $\chi_1$ is an extension of $1$ or $\omega_{F/F_0}$ to $F^\times$.   An irreducible character $\chi$ has level zero if and only if both $\chi_1$ and $\chi_2$ have level zero.

\subsection{Hecke Algebras}

To find the characters $\chi$ such that the induced representation $i_B^G(\chi)$ is reducible we study the algebras $\mathcal{H}(G,\lambda)$.
\begin{thm}\label{Heckealgdesc}
Suppose $\lambda_T$ is a character of $T^0$.    Let $(J,\lambda)$ be a $G$-cover of $(T^0,\lambda_T)$ as constructed in Lemma \ref{plcovers}.
\begin{enumerate}
\item If $\lambda_T$ is regular then $\mathcal{H}(G,\lambda)\simeq R[X^{\pm 1}]$.
\item If $\lambda_T$ is not regular then $\mathcal{H}(G,\lambda)$ is a two-dimensional algebra generated as an $R$-algebra by $f_{w_x}$ and $f_{w_y}$ and the relations
\begin{align*}
f_{w_x}\star f_{w_x}=(q^a-1)f_{w_x}+q^a; \qquad f_{w_y}\star f_{w_y}=(q-1)f_{w_y}+q;
\end{align*}
where $a=3$ and $f_{w_x}(1)=f_{w_y}(1)=1$ if $\lambda_T$ is trivial on $T^1$ and factors through the determinant, and $a=1$, $f_{w_x}(1)=\frac{1}{q}$ and $f_{w_y}(1)=1$ if not.
\end{enumerate}
\end{thm}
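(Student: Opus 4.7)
The plan is to refine the computations already begun in the proof of Lemma \ref{plcovers} into a complete determination of $\mathcal{H}(G,\lambda)$. First, by Stevens \cite[Lemma $5.12$]{St08} the support of $\mathcal{H}(G,\lambda)$ lies in $J \GE J$, and since $\kappa$ is intertwined on $J^1$ by every element of $\GE$, an element $g \in J\GE J$ intertwines $\lambda = \kappa \otimes \sigma$ if and only if a representative of $g$ in $\widetilde{W}$ stabilises $\lambda_T$ under its conjugation action on $T^0$. The translation $\zeta = w_xw_y$ always fixes $\lambda_T$, because translations act trivially on the compact torus, while the reflections $w_x, w_y$ fix $\lambda_T$ simultaneously or not at all by Lemma \ref{intchi}. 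This splits the argument into the two advertised cases.

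For the regular case the only double cosets supporting the algebra are $J\zeta^n J$, $n \in \mathbb{Z}$. Using that $\zeta$ is strongly $(B,J)$-positive (so that conjugation by $\zeta$ strictly contracts $J \cap N$ and expands $J \cap \overline{N}$), one checks that $f_{\zeta^m} \star f_{\zeta^n}$ is supported on the single double coset $J\zeta^{m+n}J$. Combined with the invertibility of $f_\zeta$ already established via $f_\zeta \star f_{\zeta^{-1}}(1) = q^4$ in the proof of Lemma \ref{plcovers}, this yields $\mathcal{H}(G,\lambda) \simeq R[X^{\pm 1}]$ via $X \mapsto f_\zeta$, after renormalising the basis if necessary.

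In the non-regular case both $f_{w_x}, f_{w_y}$ lie in $\mathcal{H}(G,\lambda)$, and the identity $Jw_xJw_yJ = J\zeta J$ already obtained in the proof of Lemma \ref{plcovers} shows $f_{w_y}\star f_{w_x}$ is a nonzero scalar multiple of $f_\zeta$; since $\widetilde{W} = \langle w_x, w_y\rangle$, the algebra is generated as an $R$-algebra by $f_{w_x}$ and $f_{w_y}$. The quadratic relations come from $f_{w_i}\star f_{w_i}$, whose support is contained in $(J \cup Jw_iJ)\cap J\GE J = J \cup Jw_iJ$. Decomposing the convolution on each double coset via the Iwahori decomposition of $J$ yields $f_{w_i}\star f_{w_i} = (\alpha_i - 1)f_{w_i} + \alpha_i f_1$, where $\alpha_i$ is essentially the index $[J : J\cap w_iJw_i^{-1}]$, a power of $q$; the normalisations of $f_{w_x}, f_{w_y}$ stated in the theorem are then chosen to make this formula take the advertised shape.

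The exponent $a$ is identified as follows: when $\lambda_T$ is trivial on $T^1$ and factors through the determinant, we are essentially in a level-zero situation with $J$ playing the role of the Iwahori of $G = \U(2,1)(F/F_0)$, so $w_x$ corresponds to the long reflection in $\M(\Lambda_x) \simeq \U(2,1)(k_F/k_0)$ and the relevant big cell over its Borel has size $q^3$, while $w_y$ always corresponds to a $\U(1,1)$-type big cell of size $q$; otherwise, by the classification of strata in Section \ref{431}, $\GE$ contains a $\U(1,1)(F/F_0)$-factor and $\mathcal{H}(G,\lambda)$ matches the Iwahori--Hecke algebra of that factor, both of whose reflections contribute big cells of size $q$. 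The main obstacle I expect is the careful normalisation of $f_{w_x}$ in the positive level setting (the factor $1/q$ in the non-trivial case): one must track how the extension $\kappa$ transforms under conjugation by $w_x$ along the Iwahori decomposition $J = (J\cap\overline{N})(J\cap T)(J\cap N)$, and cancel the additional power of $q$ coming from the mismatch between this decomposition of $J$ and the genuine Iwahori decomposition of the relevant parahoric in $\GE$ so that the quadratic relation is uniformly $(q-1)f_{w_i}+q$.
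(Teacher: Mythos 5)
Your proposal reproduces the paper's handling of the regular case and the identification of the generators in the non-regular case, but the key step --- pinning down the quadratic relations --- goes off in a direction that does not work, and you yourself flag the missing piece without resolving it.

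The central problem is your claim that ``decomposing the convolution $f_{w_i}\star f_{w_i}$ via the Iwahori decomposition of $J$ yields $f_{w_i}\star f_{w_i} = (\alpha_i - 1)f_{w_i} + \alpha_i f_1$, where $\alpha_i$ is essentially the index $[J : J\cap w_iJw_i^{-1}]$.'' This is false in precisely the case that matters. The index $[J:J\cap w_xJw_x]$ equals $q^3$ \emph{in all cases} (the paper uses this to normalise), whereas the quadratic relation for $f_{w_x}$ has parameter $q^a$ with $a$ equal to $1$ or $3$. If you write $f_{w_x} = r f_{w_x,1}$ with $f_{w_x,1}(1)=1$, then $f_{w_x,1}\star f_{w_x,1} = c\, f_{w_x,1} + q^3 f_1$ for some scalar $c$ that is \emph{not} determined by the index alone: when $a=3$ one needs $c = q^3-1$, but when $a=1$ one needs $c = q^2-q$. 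The cancellations along the coset sum are governed by the character $\sigma$, and this is exactly the information a naive Iwahori decomposition count cannot see. The paper avoids this computation entirely: it builds support-preserving injections $\mathcal{H}(\P(\Lambda_E^z),\sigma)\hookrightarrow\mathcal{H}(\P(\Lambda^z),\widehat{\kappa}_z\otimes\sigma)$ out of the compatible $\beta$-extensions of Lemma \ref{compatbeta}, reduces the quadratic relation to the finite group Hecke algebra $\mathcal{H}(\M(\Lambda_E^z),\sigma)$, and reads off the parameter $d_z$ from Howlett--Lehrer's theorem as a \emph{ratio of dimensions} $\dim(\rho_1^z)/\dim(\rho_2^z)$ rather than an index. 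Only after that does it use the index computation $[J:J\cap w_xJw_x]=q^3$ to fix the normalisation $r_x$.

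Your heuristic for the value of $a$ is also incorrect. You claim that when $\lambda_T$ does not factor through the determinant (or has positive level) ``$\GE$ contains a $\U(1,1)(F/F_0)$-factor.'' But when $\lambda_T$ is trivial on $T^1$, not factoring through the determinant yet satisfying $\chi_1^{q+1}=1$, the stratum is scalar with $\GE = G$ and $\M(\Lambda^x_E)\simeq\U(2,1)(k_F/k_0)$ has no $\U(1,1)$ factor; nevertheless $a=1$ because the finite principal series $i^{\M(\Lambda_x)}_{B_x}(\sigma)$ splits into Deligne--Lusztig constituents of dimensions $q^2-q+1$ and $q(q^2-q+1)$ (ratio $q$), not $1$ and $q^3$. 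The parameter is a function of the character $\sigma$, not of the shape of $\GE$. Finally, your argument makes no provision for the $\ell$-modular case: Howlett--Lehrer is a characteristic-zero statement, and the paper has to invoke the reduction-stable bases of Geck--Hi\ss--Malle to transport the quadratic relation modulo $\ell$; without some substitute for this, the $\ell$-modular conclusion remains unproved even once the $\ell$-adic relations are settled.
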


\begin{proof}
If $g\in I_G(\lambda)$ then $\I_g(\lambda)\simeq R$ because $\chi$ is a character.  For $g\in I_G(\lambda)$, $r\in R$ we let $f_{g,r}$ denote the unique function supported on $JgJ$ with $f_{g,r}(1_G)=r$.  If $\lambda_T$ is regular then the support of $\mathcal{H}(G,\lambda)$ is $JTJ=\bigcup_{n\in\mathbb{Z}}J\zeta^n J$ and, since each intertwining space is one-dimensional and $f_{\zeta^n,1}$ has support $J\zeta^n J$, we have an isomorphism $\mathcal{H}(G,\lambda)\simeq R[X^{\pm 1}]$ defined by $f_{\zeta,1}\mapsto X$.

Suppose $w_x\in I_G(\lambda)$. By Lemma \ref{intchi}, $w_x$ intertwines $\lambda$ if and only if $w_y$ intertwines $\lambda$.  The support of the Hecke algebra is contained in the intertwining of $\eta=\Res^J_{J^1}(\kappa)$ which is $J\GE J$.  By the semisimple intersection property, Stevens \cite[Lemma $2.6$]{St08}, and the Bruhat decomposition we have $J\GE J=\bigcup_{w\in \widetilde{W}}JwJ$.  As in the proof of lemma \ref{plcovers} we have $Jw_xJw_yJ=Jw_xw_yJ$ and, similarly, $Jw_yJw_xJ=Jw_yw_xJ$.  Hence, as the intertwining spaces are one-dimensional, the support of $f_{w_x}\star f_{w_y}\star f_{w_x}\star\cdots\star f_{w_i}$ is $Jw_xw_yw_x\cdots w_iJ$. Thus, as $\widetilde{W}$ is an infinite dihedral group generated by $w_x$ and $w_y$, $\mathcal{H}(G,\lambda)$ is generated by $f_{w_x,1}$ and $f_{w_y,1}$ and the quadratic relations $f_{w_x,1}\star f_{w_x,1}$ and $f_{w_y,1}\star f_{w_y,1}$.  Let $\Lambda^x$ and $\Lambda^y$ be $\mathfrak{o}_E$-lattice sequences such that the parahoric subgroups $\P(\Lambda^x_E)=\P(\Lambda_E)\cup\P(\Lambda_E)w_x\P(\Lambda_E)$ and $\P(\Lambda^y_E)=\P(\Lambda_E)\cup\P(\Lambda_E)w_y\P(\Lambda_E)$. The parahoric subgroups $\P(\Lambda^x_E)$ and $\P(\Lambda_E^y)$ are non-conjugate, maximal and contain $\P(\Lambda_E)$. Let $\kappa_x$ and $\kappa_y$ be the $\beta$-extensions, compatible with $\kappa$, defined by Lemma \ref{compatbeta} related to the skew semisimple strata $[\Lambda^x,n,0,\beta]$ and $[\Lambda^y,n,0,\beta]$.  

For $z\in\{x,y\}$, let $\widehat{\kappa}_z=\Res^J_{J^1(\beta,\Lambda^i)\P(\Lambda_E)}(\kappa_z)$.  We have a support preserving isomorphism
\[\mathcal{H}(G,\kappa\otimes\sigma)\simeq \mathcal{H}(G,\widehat{\kappa}_z\otimes\sigma)\]
by Lemma \ref{compatbeta} and transitivity of compact induction. We have a support preserving injection of algebras
\[\mathcal{H}(\P(\Lambda_E^z),\sigma)\rightarrow\mathcal{H}(\P(\Lambda^z),\widehat{\kappa}_z\otimes \sigma)\]
defined by $\Phi\mapsto \widehat{\kappa}_z\otimes\Phi$ where $\sigma$ is considered as a character of $\P(\Lambda_E)$ trivial on $\P_1(\Lambda_E)$. 

Let $B_z$ be the standard Borel subgroup of $\M(\Lambda_E^z)$.  In the $\ell$-adic case, by \cite[Theorem $4.14$]{HL80}, if $i^{\M(\Lambda^z_E)}_{B_z}(\overline{\sigma})=\rho^z_1\oplus\rho^z_2$ with $\dim(\rho^z_1)\geqslant \dim(\rho^z_2)$ then $\mathcal{H}(\M(\Lambda^z_E),\overline{\sigma})$ is generated by $T^z_w$ which is supported on the double coset $B_zw_xB_z$ and satisfies the quadratic relation
\begin{align*}T^z_w\star T^z_w= (d_z-1)T^z_w+d_z T^z_1\end{align*}
where $d_z=\dim(\rho^z_1)/\dim(\rho^z_2)$ and $T^z_1$ is the identity of $\mathcal{H}(\M(\Lambda^z_E),\overline{\sigma})$.  By Section \ref{cuspU11U21}, $d_y=q$ and
\begin{align*}
d_x=\begin{cases}
q^3&\text{if }\lambda_T \text{ is trivial on }T^1\text{ and factors through the determinant};\\
q&\text{otherwise.}
\end{cases}
\end{align*}

In the $\ell$-modular case, we choose a lift $\widehat{\overline{\sigma}}$ of $\overline{\sigma}$ such that $\widehat{\overline{\sigma}}^{w_x}=\widehat{\overline{\sigma}}$.  Let $L$ be a lattice in $\widehat{\overline{\sigma}}$.  Recall that $\widehat{\overline{\sigma}}$ is called a reduction stable of $\overline{\sigma}$ if $\mathcal{H}(\M(\Lambda_E^z),\overline{\sigma})=\Zl\otimes_{\Fl}\mathcal{H}(\M(\Lambda_E^z),L)$ and $\mathcal{H}(\M(\Lambda_E^z),\widehat{\overline{\sigma}})=\Ql\otimes_{\Fl}\mathcal{H}(\M(\Lambda_E^z),L)$.  A basis of $\mathcal{H}(\M(\Lambda_E^z),\widehat{\overline{\sigma}})$ is called reduction stable if it is a basis of $\mathcal{H}(\M(\Lambda_E^z),L)$ and $\widehat{\overline{\sigma}}$ is reduction stable. By \cite[Section 3.1]{GHM}, $\widehat{\overline{\sigma}}$ is reduction stable and a basis of $\mathcal{H}(\M(\Lambda_E^z),\widehat{\overline{\sigma}})$ is reduction stable. Hence we obtain a basis of $\mathcal{H}(\M(\Lambda_E^z),\overline{\sigma})$ satisfying the quadratic relations required by reduction modulo-$\ell$. 

By inflation $T^z_w$ determines an element $f_{w_z,r_z}\in \mathcal{H}(\P(\Lambda_E^z),\sigma)$ supported on $Jw_zJ$.  Furthermore, $f_{w_x,1}\star f_{w_x,1}(1_G)=[J:J\cap w_xJw_x]=q^3$ and $f_{w_y,1}\star f_{w_y,1}(1_G)=[J:J\cap w_yJw_y]=q$ in all cases, hence $r_x=r_y=1$ if $\lambda_T$ is trivial on $T^1$ and factors through the determinant, and $r_x=\frac{1}{q}$ and $r_y=1$ otherwise.
\end{proof}

\subsection{Reducibility points}\label{redpoints}
Suppose $i^G_B(\chi)$ is reducible and let $\lambda_T=\Res^T_{T^0}(\chi)$.  By Theorem \ref{Heckealgdesc}, $\lambda_T$ is not regular.  Let $(J,\lambda)$ be a $G$-cover of $(T^0,\lambda_T)$ as constructed in Lemma \ref{plcovers} with $\lambda=\kappa\otimes\sigma$.  If $\pi$ is an irreducible quotient of $\I_{\kappa}(\sigma)$ and an irreducible quotient of $i^G_B(\chi)$ then, by exactness of the Jacquet functor, $r^G_B(\pi)$ is one-dimensional.  Hence, as $(j_P)^*(M_\lambda(\pi))\simeq M_{\lambda_T}(r^G_P(\pi))$ by Theorem \ref{jacquetcommutes}, $\pi$ must correspond to a character of $\mathcal{H}(G,\lambda)$ under the bijection of Theorem \ref{qplemma}.  The characters of $\mathcal{H}(G,\lambda)$ are determined by their values on the generators $f_{w_x}$ and $f_{w_y}$.  Let $a$ be given by Theorem \ref{Heckealgdesc}.  The characters of $\mathcal{H}(G,\lambda)$ are summarised in the following table.

\begin{center}
\begin{tabular}{QQQ}
\toprule
\text{Character}\text{ of }\mathcal{H}_R(G,\lambda) & \text{Value on }f_{w_x} & \text{Value on }f_{w_y}\\ \midrule
 \Xi_{\text{sgn}}&-1&-1\\ \midrule
 \Xi_{\text{ind}} & q^a & q\\ \midrule
 \Xi_1           &  q^a & -1 \\ \midrule
 \Xi_2            &  -1  & q\\\bottomrule
\end{tabular}
\end{center}

 If $q^a\neq-1\bmod{\ell}$, then these characters are distinct; if $q^a=-1\bmod{\ell}$, but $q\neq -1\bmod{\ell}$, then there are two characters $ \Xi_{\text{sgn}}=\Xi_1$ and $\Xi_{\text{ind}} =\Xi_2$; if $q=-1\mod{\ell}$ then there is a unique character $ \Xi_{\text{sgn}}=\Xi_1=\Xi_{\text{ind}} =\Xi_2$.  To calculate the values of $\chi$ where this reducibility occurs we make the injection $j_B:\mathcal{H}(T,\lambda_T)\rightarrow\mathcal{H}(G,\lambda)$ explicit.  We have $\mathcal{H}(T,\lambda_T)\simeq R[X^{\pm 1}]$ and the injection is induced by mapping $X$ to $\varepsilon f_{w_x}f_{w_y}$ for some scalar $\varepsilon\in R$.  It is determining the sign of this scalar which requires work. The normalised restriction map $(j_B)^*$ is then induced by this injection and twisting by $\delta_B^{-\frac{1}{2}}$. As the space of constant functions forms an irreducible subrepresentation of $i^G_B(\delta_B^{-\frac{1}{2}})$; we find that $\varepsilon=1$ if $\lambda_T$ is trivial on $T^1$ and factors through the determinant.  Using an alternative method, Keys \cite{keys} computed the $\ell$-adic reducibility points.  By reduction modulo-$\ell$ from the results of Keys we find $\varepsilon=-q$ in all other cases.

\begin{thm}\label{redpoints}
Let $\chi$ be an irreducible $\ell$-modular character of $T$.   Then $i^G_B(\chi)$ is reducible exactly in the following cases:
\begin{enumerate}
\item $\chi=\delta_B^{\pm \frac{1}{2}}$;
\item $\chi= \eta\delta_B^{\pm \frac{1}{4}}$ where $\eta$ is any extension of $\omega_{F/F_0}$ to $F^\times$;
\item $\chi_1$ is nontrivial, but $\chi_1\mid_{F_0^\times}$ is trivial.
\end{enumerate}
\end{thm}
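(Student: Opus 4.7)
The plan is to study $i^G_B(\chi)$ via the Hecke algebra of a $G$-cover of $(T^0,\lambda_T)$, where $\lambda_T=\Res^T_{T^0}(\chi)$, and to pin down reducibility by matching characters of $\mathcal{H}(G,\lambda)$ on both sides of the embedding $j_B:\mathcal{H}(T,\lambda_T)\to\mathcal{H}(G,\lambda)$.

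First I would construct a $G$-cover $(J,\lambda=\kappa\otimes\sigma)$ of $(T^0,\lambda_T)$ via Lemma \ref{plcovers}, and apply Theorem \ref{jacquetcommutes} so that irreducible subquotients of $i^G_B(\chi)$ containing $(J,\lambda)$ correspond bijectively, via $M_\lambda$, to simple $\mathcal{H}(G,\lambda)$-modules appearing in $M_{\lambda_T}(r^G_B(i^G_B(\chi)))$. By Lemma \ref{intchi} the intertwining behaviour of $\lambda$ is controlled by whether $\lambda_T$ is regular; since every irreducible subquotient of $i^G_B(\chi)$ has non-trivial Jacquet module by Frobenius reciprocity, each such subquotient contains $(J,\lambda)$.

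Next I would dispose of the regular case. If $\lambda_T$ is regular, Theorem \ref{Heckealgdesc} gives $\mathcal{H}(G,\lambda)\simeq R[X^{\pm 1}]\simeq \mathcal{H}(T,\lambda_T)$, with $j_B$ an isomorphism (after twisting by $\delta_B^{-\frac{1}{2}}$). Since $\chi^w\mid_{T^0}=\lambda_T^w\neq \lambda_T$, the geometric lemma shows $M_{\lambda_T}(r^G_B(i^G_B(\chi)))=M_{\lambda_T}(\chi\oplus\chi^w)$ is one-dimensional, so $i^G_B(\chi)$ is irreducible. In the non-regular case, Theorem \ref{Heckealgdesc} presents $\mathcal{H}(G,\lambda)$ as the quadratic two-generator algebra with at most four characters $\Xi_{\text{sgn}},\Xi_{\text{ind}},\Xi_1,\Xi_2$, collapsing when $\ell\mid q^a+1$ or $\ell\mid q+1$. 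Reducibility of $i^G_B(\chi)$ then amounts to at least two distinct $\Xi_j$ pulling back under $(j_B)^*$ to the same $\mathcal{H}(T,\lambda_T)$-character associated to $\chi$. Since $j_B(X)=\varepsilon f_{w_x}f_{w_y}$ for some scalar $\varepsilon\in R^\times$, each character value $\Xi_j(j_B(X))$ is explicit in terms of $q$, $\varepsilon$ and $a\in\{1,3\}$.

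The crucial step is to pin down $\varepsilon$. When $\lambda_T$ is trivial on $T^1$ and factors through the determinant, the space of constant functions is an irreducible subrepresentation of $i^G_B(\delta_B^{-\frac{1}{2}})$, which identifies $\delta_B^{-\frac{1}{2}}$ with one of the $\Xi_j$ via $(j_B)^*$ and forces $\varepsilon=1$. In the remaining cases I would invoke Keys' $\ell$-adic computation of reducibility points \cite{keys} and transport it by reduction modulo $\ell$ to force $\varepsilon=-q$. With $\varepsilon$ determined, comparing each pair $(\Xi_i(j_B(X)),\Xi_j(j_B(X)))$ to $\chi(\varpi_F)\delta_B^{\frac{1}{2}}(\varpi_F)$ yields precisely the three cases: $\chi_1=1$ gives the two reducibilities $\chi=\delta_B^{\pm\frac{1}{2}}$; $\chi_1$ an extension of $\omega_{F/F_0}$ gives the two reducibilities $\chi=\eta\delta_B^{\pm\frac{1}{4}}$; and $\chi_1$ nontrivial with $\chi_1\mid_{F_0^\times}$ trivial gives unconditional reducibility for any $\chi_2$. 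The main obstacle is this sign determination, which as the author notes could in principle be carried out intrinsically via the theory of covers à la Blondel but here relies on the external input of Keys.
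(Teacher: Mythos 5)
Your proposal is correct and takes essentially the same route as the paper: reduce to the non-regular case via the Hecke-algebra description of Theorem \ref{Heckealgdesc}, use the module correspondence from Theorems \ref{qplemma} and \ref{jacquetcommutes} together with the embedding $j_B:\mathcal{H}(T,\lambda_T)\to\mathcal{H}(G,\lambda)$ sending $X\mapsto\varepsilon f_{w_x}f_{w_y}$, pin down $\varepsilon=1$ via the constant functions in $i^G_B(\delta_B^{-\frac{1}{2}})$ when $\lambda_T$ factors through the determinant, and import $\varepsilon=-q$ in the remaining cases from Keys by reduction modulo $\ell$. The only difference is that you spell out the irreducibility in the regular case a bit more explicitly than the paper does; otherwise the strategy, key lemmas and the crucial sign determination are identical.
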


\subsection{Parahoric restriction and parabolic induction}
As the parabolic functors respect the decomposition of $\mathfrak{R}_R(G)$ by level, by Vign\'eras \cite[II 5.12]{Vig96}, if $\chi$ is a level zero character of $T$ (i.e. a character of $T$ trivial on $T^1$) then all irreducible subquotients of $i^G_B(\chi)$ have level zero.  

\begin{lemma}\label{theorem2}
Let $w\in\{x,y\}$ and $\chi$ be a level zero character of $T$.  Then
$\R_{\Lambda_w}(i_B^G(\chi))\simeq {i}_{B_w}^{\M(\Lambda_w)}(\chi)$.\end{lemma}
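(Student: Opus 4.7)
Set $K=\P(\Lambda_w)$ and $K^1=\P_1(\Lambda_w)$, so that $\R_{\Lambda_w}(i^G_B(\chi))=i^G_B(\chi)^{K^1}$ and $\M(\Lambda_w)=K/K^1$. The plan is to first restrict $i^G_B(\chi)$ to $K$ by an Iwasawa decomposition, then extract the $K^1$-invariants and recognise the result as a finite-group parabolic induction. I would begin by establishing the Iwasawa decomposition $G=KB$. For the hyperspecial $\P(\Lambda_x)$ this is classical; for $\P(\Lambda_y)$ one can combine the Iwahori--Bruhat decomposition $G=\P(\Lambda_I)\widetilde{W}\P(\Lambda_I)$ with the elementary identity $w_y=w_x\cdot\diag(\varpi_F^{-1},1,\varpi_F)\in \P(\Lambda_x)B$ and the observation that $\widetilde{W}$ is generated by $w_x,w_y$ together with the non-compact part of $T$, so that every element of $\widetilde{W}$ lies in $\P(\Lambda_w)B$. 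Since $K\backslash G/B$ is a single double coset, Mackey yields
\[
\Res^G_K\, i^G_B(\chi)\;\simeq\;\Ind_{K\cap B}^K\bigl((\chi\delta_B^{1/2})|_{K\cap B}\bigr).
\]

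Next I would remove the modulus twist. We have $K\cap T=T^0$, which consists of elements $\diag(x,y,\overline{x}^{-1})$ with $x\in\mathfrak{o}_F^\times$; on these $\delta_B(\diag(x,y,\overline{x}^{-1}))=|x|_F^4=1$, and $\delta_B\equiv 1$ on $K\cap N$. Hence $\delta_B^{1/2}|_{K\cap B}=1$ and the induced character is just $\chi|_{K\cap B}$. Now the level-zero hypothesis on $\chi$ forces $\chi|_{T^1}=1$, and since the Iwahori decomposition gives $K^1\cap B=(K^1\cap T)(K^1\cap N)\subseteq T^1\cdot N$, one deduces $\chi|_{K^1\cap B}=1$. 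Thus $\chi|_{K\cap B}$ factors through the quotient $(K\cap B)/(K^1\cap B)\simeq (K\cap B)K^1/K^1$, viewed as a subgroup of $\M(\Lambda_w)$.

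To finish I would identify this subgroup with $B_w$. The Iwahori decomposition gives $K\cap B=T^0(K\cap N)$; its image in $\M(\Lambda_w)$ contains the maximal torus $\M(\Lambda_I)$ (the image of $T^0$) and the unipotent radical $N_w$ of $B_w$ (the image of $K\cap N$), while remaining contained in $B_w$ since $\P(\Lambda_I)$ is the preimage of $B_w$ and contains $(K\cap B)K^1$. Hence $(K\cap B)K^1/K^1=B_w$, and passing the $K^1$-invariants through the induction produces
\[
\R_{\Lambda_w}\bigl(i^G_B(\chi)\bigr)\;\simeq\;\Ind_{B_w}^{\M(\Lambda_w)}(\chi)\;=\;i_{B_w}^{\M(\Lambda_w)}(\chi),
\]
the last equality because parabolic induction in a finite reductive group carries a trivial modulus twist. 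The main obstacle I anticipate is the clean verification of the Iwasawa decomposition $G=KB$ for the non-hyperspecial parahoric $\P(\Lambda_y)$ and a careful handling of the normalisations so that the modulus character really disappears on $K\cap B$; once those are in place, the remaining identifications are formal.
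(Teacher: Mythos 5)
Your overall route — Mackey theory for a single $(K,B)$ double coset, followed by identification of the level-zero data — is exactly the paper's route (the paper's proof is a one-liner citing Mackey and the Iwasawa decomposition), and the downstream details you supply are correct: $\delta_B^{1/2}$ is trivial on $T^0(K\cap N)$, the level-zero hypothesis kills $\chi$ on $K^1\cap B$, and the image of $K\cap B$ in $\M(\Lambda_w)$ is $B_w$. The one place that does not close is your derivation of the Iwasawa decomposition $G=\P(\Lambda_y)B$ from Iwahori--Bruhat. Knowing $w_y\in\P(\Lambda_x)B$ (you also have the exponents reversed: $w_y=w_x\cdot\diag(\varpi_F,1,\varpi_F^{-1})$) and that $\widetilde W=\langle w_x,w_y\rangle$ does not yield $\P(\Lambda_I)\widetilde W\P(\Lambda_I)\subseteq\P(\Lambda_w)B$, because $\P(\Lambda_w)B$ is only a subset and not a subgroup: products of elements of $\P(\Lambda_w)B$ need not remain in $\P(\Lambda_w)B$, and even a word $w\in\widetilde W$ lying in $\P(\Lambda_w)B$ does not control the double coset $\P(\Lambda_I)w\P(\Lambda_I)$. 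The fact you want is that both $\Lambda_x$ and $\Lambda_y$ correspond to special vertices in the (rank-one, hence tree-type) Bruhat--Tits building of $G$, and for any special maximal parahoric $K$ one has $G=KB$; this follows, for instance, from $N$ acting transitively on apartments containing the chamber at infinity fixed by $B$ together with $T$ acting transitively (by translations) on vertices of each type in the standard apartment. The paper itself invokes the Iwasawa decomposition as a known fact rather than reproving it, so this is a gap in your sketch of a standard input rather than in the structure of the proof.
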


\begin{proof}
The proof follows by Mackey theory as the maximal parahoric subgroups of $G$ satisfy the Iwasawa decomposition.  
\end{proof}

Let $[\Lambda,n,0,\beta]$ be a skew semisimple stratum in $A$.  Let $\theta\in\mathcal{C}_{-}(\Lambda,\beta)$, and $\kappa$ be a $\beta$-extension of the unique Heisenberg representation containing $\theta$.   Let $\chi$ be an irreducible $\ell$-modular character of $T$ which contains the $R$-type $(J_T,\kappa_T\otimes\sigma)$.  Futhermore, suppose that $(J,\kappa\otimes\sigma)$ is a $G$-cover of $(J_T, \kappa_T\otimes\sigma)$ relative to $B$, as in Lemma \ref{plcovers}.  Let $\Lambda^{m}$ be an $\mathfrak{o}_E$-lattice sequence in $V$ such that $\P(\Lambda^{m}_E)$ is maximal and $\P(\Lambda_E)\subset \P(\Lambda^{m}_E)$.  Let $\theta_{m}=\tau_{\Lambda,\Lambda^{m},\beta}(\theta)$ and $\kappa_{m}$ be the unique $\beta$-extension of the unique Heisenberg representation containing $\theta_{m}$ which is compatible with $\kappa$, as in Lemma \ref{compatbeta}.  Let $B(\Lambda_E^m)$ be the Borel subgroup of $\M(\Lambda_E^m)$ whose preimage under the projection map $\P(\Lambda_E^m)\rightarrow \M(\Lambda_E^m)$ is equal to $J$.  Suppose $B(\Lambda_E^m)$ has Levi decomposition $B(\Lambda_E^m)=T(\Lambda_E^m)\ltimes N(\Lambda_E^m)$.

The next theorem is a generalisation of a weakening of Lemma \ref{theorem2}, precisely it generalises the isomorphism Theorem \ref{theorem2} induces in the Grothendieck group $\mathfrak{Gr}_R(\M(\Lambda))$. %

\begin{thm}\label{SZadapt}We have
\begin{align*}
\left[\R_{\kappa_{m}}( i^G_B(\chi))\right]\simeq\left[ i^{M(\Lambda_E^{m})}_{B(\Lambda_E^m)}(\R_{\kappa_T}(\chi))\right].
\end{align*}
\end{thm}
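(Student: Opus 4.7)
The strategy is to reduce Theorem \ref{SZadapt} to the level-zero statement Lemma \ref{theorem2}, applied inside the smaller group $\GE$. The intuition is that $\kappa_m$-restriction sees only the $\GE$-part of an $R$-representation: by the intertwining formula for $\kappa_m$ (Stevens \cite[Proposition 3.31]{St08}), $\dim I_g(\kappa_m) \leqslant 1$ with equality iff $g \in J^1_m \GE J^1_m$, so the functor $\R_{\kappa_m}$ cannot distinguish between the original principal series of $G$ and a correctly chosen principal series of $\GE$ once the double-coset structure is taken into account.

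The first step, and the bulk of the argument, is to establish an isomorphism in the Grothendieck group
\begin{align*}
\bigl[\R_{\kappa_m}(i^G_B(\chi))\bigr] \simeq \bigl[\R^E_{\Lambda^m}(i^{\GE}_{B_E}(\chi|_{T}))\bigr],
\end{align*}
where $B_E = B \cap \GE$ is the standard Borel subgroup of $\GE$ with Levi $T$ (recall $T \subseteq \GE$, and in the relevant non-maximal case $\GE \simeq \U(1,1)(E/E^0) \times \U(1)(E/E^0)$). To obtain this I would combine Mackey theory with the Iwasawa decomposition $G = \P(\Lambda^m) B$, which gives
\begin{align*}
\Res^G_{\P(\Lambda^m)}\, i^G_B(\chi) \simeq \ind^{\P(\Lambda^m)}_{\P(\Lambda^m) \cap B}\bigl(\chi \delta_B^{1/2}\bigr),
\end{align*}
and then apply $\Hom_{J^1_m}(\kappa_m,-)$. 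Using Frobenius reciprocity together with the intertwining formula for $\kappa_m$, exactly as in the Kurinczuk--Stevens calculation underlying Theorem \ref{pl=lz}, the only surviving contributions come from double cosets meeting $\GE$, and they reassemble into the Mackey decomposition on $\GE$ (for the maximal parahoric $\P(\Lambda_E^m)$) applied to $i^{\GE}_{B_E}(\chi|_T)$.

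Once this reduction is in hand, the second step is to apply Lemma \ref{theorem2} to the group $\GE$ and its maximal parahoric $\P(\Lambda_E^m)$: for any level-zero character $\mu$ of $T$, we have $\R^E_{\Lambda^m}(i^{\GE}_{B_E}(\mu)) \simeq i^{\M(\Lambda_E^m)}_{B(\Lambda_E^m)}(\mu)$. Applying this with $\mu = \chi|_T$ and identifying $\chi|_{T^0}$ with $\R_{\kappa_T}(\chi)$ (both are one-dimensional representations of $T^0/T^1 \simeq T(\Lambda_E^m)$, and they agree by the containment condition defining $\chi$ relative to the type $(T^0, \kappa_T \otimes \sigma)$) closes the argument.

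The main obstacle will be the first step: carefully tracking the Mackey decomposition, the intertwining spaces $I_g(\kappa_m)$ for $g$ running over a set of $J_m$--$B$ double coset representatives, and the modulus twists $\delta_B^{1/2}$ versus $\delta_{B_E}^{1/2}$ when restricting from $G$ to $\GE$. Working in the Grothendieck group $\mathfrak{Gr}_R(\M(\Lambda_E^m))$ absorbs any lack of exactness or semisimplicity; only the equality of semisimplifications needs to be verified, which is exactly the statement of Theorem \ref{SZadapt}.
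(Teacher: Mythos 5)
Your proposal takes a genuinely different route from the paper, which establishes the $\ell$-adic case by a Schneider--Zink-style commutative diagram of Hecke-module category equivalences $M_{\kappa\otimes\sigma}$, $M_{\omega}$, $M_{\kappa_T\otimes\sigma}$, $M_{\omega_T}$, linked by the algebra maps $j_B$, $j_{B(\Lambda_E^m)}$, $\alpha_1$, $\alpha_2$, and then reduces modulo $\ell$. You instead propose a direct Mackey computation passing through $\GE$ and then invoking Lemma \ref{theorem2} inside $\GE$. That would be a more hands-on alternative, and the governing intuition --- that $\kappa_m$-restriction sees only the $\GE$-part --- is sound.

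However there is a genuine gap in the second step, mirrored by a bookkeeping error in the first. You claim to ``identify $\chi|_{T^0}$ with $\R_{\kappa_T}(\chi)$'', asserting that both are one-dimensional representations of $T^0/T^1$ which agree by the containment condition. This is false whenever $\chi$ has positive level. The containment condition gives $\chi|_{T^0}=\kappa_T\otimes\sigma$, so $\R_{\kappa_T}(\chi)=\Hom_{T^1}(\kappa_T,\chi)=\sigma=(\chi\kappa_T^{-1})|_{T^0}$, a \emph{level-zero} character of $T^0/T^1$; by contrast $\chi|_{T^0}$ restricted to $T^1$ is $\kappa_T|_{T^1}\neq 1$ and does not factor through $T^0/T^1$ at all. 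The two differ by the twist $\kappa_T^{-1}$, which vanishes only when $\chi$ has level zero --- precisely the case in which Theorem \ref{SZadapt} is already a weakening of Lemma \ref{theorem2} and no argument is needed. Correspondingly, the intermediate isomorphism in your first step should read $\left[\R_{\kappa_m}(i^G_B(\chi))\right]\simeq\left[\R^E_{\Lambda^m}(i^{\GE}_{B_E}(\mu))\right]$ for a level-zero character $\mu$ of $T$ satisfying $\mu|_{T^0}=\R_{\kappa_T}(\chi)$, and not for $\mu=\chi|_T$. The Mackey computation you sketch ought to produce this twist automatically, since $\Hom_{J^1_m}(\kappa_m,-)$ strips off the positive-level data, but as written step 1 omits it and step 2 compensates with the false identification. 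Beyond the twist, the step-1 reduction is asserted rather than verified: Theorem \ref{pl=lz} compares $\R_{\kappa_2}\circ\I_{\kappa_1}$ with $\R^E_{\Lambda^2}\circ\I^E_{\Lambda^1}$, not $\R_{\kappa_m}\circ i^G_B$ with $\R^E_{\Lambda^m}\circ i^{\GE}_{B_E}$, so it cannot simply be cited here, and the double-coset analysis matching $\P(\Lambda^m)\backslash G/B$ against $\P(\Lambda^m_E)\backslash\GE/B_E$, together with the comparison of modulus characters, remains to be carried out in the skew semisimple non-scalar case.
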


\begin{proof}
We prove the corresponding result in the $\ell$-adic case first and deduce the $\ell$-modular result by reduction modulo-$\ell$.  The proof in the $\ell$-adic case follows a similar argument made for $\GL_n(F)$ in Schneider--Zink \cite{SZ99}. Let $\Omega_T=[T,\rho]_T$ and $\Omega=[T,\rho]_G$ be inertial equivalence classes. Let $\mathfrak{R}_{\Ql}(\Omega)$ denote the full subcategory of $\R_{\Ql}(G)$ of representations all of whose irreducible subquotients have inertial support in $\Omega$, and $\mathfrak{R}_{\Ql}(\Omega_T)$ denote the full subcategory of $\R_{\Ql}(T)$ of representations all of whose irreducible subquotients have inertial support in $\Omega_T$.   Let $\omega$ denote the $\M(\Lambda^{m}_E)$-conjugacy class of $\sigma$ and $\omega_T$ the $T(\Lambda_E^m)$-conjugacy class of $\sigma$.  Let $\mathfrak{R}_{\Ql}(\omega)$ be the full subcategory of $\mathfrak{R}_{\Ql}(\M(\Lambda^{m}_E))$ of representations all of whose irreducible subquotients have supercuspidal support in $\omega$ and  $\mathfrak{R}_{\Ql}(\omega_T)$ be the full subcategory of $\mathfrak{R}_{\Ql}(T(\Lambda_E^m))$ of representations all of whose irreducible subquotients have lie in $\omega_T$.  Let $M_{\omega}:\mathfrak{R}_{\Ql}(\omega)\rightarrow \mathcal{M}(\M(\Lambda_{E}^{m}),\sigma)$ be defined by $\rho\mapsto\Hom_{B(\Lambda_E^m)}(\sigma,\rho)$, for $\rho\in\mathfrak{R}_{\Ql}(\omega)$.  Similarly, let $M_{\omega_T}:\mathfrak{R}_{\Ql}(\omega_T)\rightarrow \mathcal{M}(T(\Lambda_E^m),\sigma)$ be defined by $\rho\mapsto\Hom_{T(\Lambda_E^m)}(\sigma,\rho)$, for $\rho\in\mathfrak{R}_{\Ql}(\omega_T)$.  We prove that the following diagram commutes.
 \begin{center}
\begin{tikzpicture}
\matrix [matrix of math nodes, column sep=1cm, row sep=2.2cm,text height=1.5ex, text depth=0.25ex]
{
&\node(l1) {\mathfrak{R}_{\Ql}(\omega)}; &\node(l2) {\mathcal{M}(\M(\Lambda_E^{m}),\sigma)};&\node(l4) {\mathfrak{R}_{\Ql}(\omega)}; & \\
\node(m1) {\mathfrak{R}_{\Ql}(\Omega)}; &\node(m2) {\mathcal{M}(G,\kappa\otimes\sigma)};& &\node(m3) {\mathcal{M}(T(\Lambda_E^m),\sigma)}; &\node(m4) {\mathfrak{R}_{\Ql}(\omega_T)}; \\
&\node(t1) {\mathfrak{R}_{\Ql}(\Omega_T)}; &\node(t2) {\mathcal{M}(T,\kappa_T\otimes\sigma)};&\node(t4) {\mathfrak{R}_{\Ql}(\Omega_T)};&\node(t3) {};  \\
};
\draw [->] (t1) -- (t2)
node [above,midway] {$\simeq$}
node [below,midway] {$M_{\kappa_T\otimes \sigma}$};
\draw [->] (t4) -- (t2)
node [above,midway] {$\simeq$}
node [below,midway] {$M_{\kappa_T\otimes \sigma}$};
\draw [->] (m1) -- (m2)
node [above,midway] {$\simeq$}
node [below,midway] {$M_{\kappa\otimes \sigma}$};
\draw [->] (m4) -- (m3)
node [above,midway] {$\simeq$}
node [below,midway] {$M_{\omega_T}$};
\draw [->] (t1) -- (m1)
node [left,midway] {$i^G_B$\phantom{y}};
\draw [->] (t2) -- (m2)
node [right,midway] {\phantom{y}$(j_B)_*$};
\draw [->] (t2) -- (m3)
node [left,midway] {$\Res$\phantom{y}};
\draw [->] (t4) -- (m4)
node [right,midway] {\phantom{y}$\R_{\kappa_T}$};
\draw [->] (l1) -- (l2)
node [above,midway] {$\simeq$}
node [below,midway] {$M_{\omega}$};
\draw [->] (l4) -- (l2)
node [above,midway] {$\simeq$}
node [below,midway] {$M_{\omega}$};
\draw [->] (m1) -- (l1)
node [left,midway] {$\R_{\kappa_{m}}$\phantom{i}};
\draw [->] (m2) -- (l2)
node [right,midway] {\phantom{y}$\Res$};
\draw [->] (m3) -- (l2)
node [left,midway] {$(j_{B(\Lambda_E^m)})_*$\phantom{i}};
\draw [->] (m4) -- (l4)
node [right,midway] {\phantom{y}$i^{\M(\Lambda^{m}_E)}_{B(\Lambda_E^m)}$};
\end{tikzpicture}
\end{center}
We have $M_{\omega}\circ i^{\M(\Lambda_E^{m})}_{B(\Lambda_E^m)}\simeq (j_{B(\Lambda_E^m)})_*\circ M_{\omega_T}$ and $M_{\kappa\otimes\sigma}\circ i^G_B\simeq (j_B)_*\circ M_{\kappa_T\otimes\sigma}$ by Bushnell--Kutzko \cite[Corollary 8.4]{BK98}.  

We have support preserving injections $\alpha_1:\mathcal{H}(\M(\Lambda^{m}_E),\sigma)\rightarrow \mathcal{H}(G,\kappa\otimes \sigma)$ and $\alpha_2:\mathcal{H}(T(\Lambda_E^m),\sigma)\rightarrow \mathcal{H}(T,\kappa_T\otimes \sigma)$, as in the proof of Theorem \ref{Heckealgdesc}, hence restriction functors $\mathcal{M}(G,\kappa\otimes \sigma)\rightarrow \mathcal{M}(\M(\Lambda^{m}_E),\sigma)$ and $\mathcal{M}(T,\kappa_T\otimes \sigma)\rightarrow \mathcal{M}(T(\Lambda_E^m),\sigma)$, denoted in the diagram by $\Res$.  Because $\mathcal{H}(T(\Lambda_E^m),\sigma)$ is one-dimensional and the injections defined are homomorphisms of algebras we must have $j_B\circ\alpha_1\simeq j_{B(\Lambda_E^m)}\circ\alpha_2$, hence also $\Res\circ(j_B)_*\simeq (j_{B(\Lambda_E^m)})_*\circ \Res$. 

We show that $M_{\omega}\circ\R_{\kappa_{m}}\simeq \Res\circ M_{\kappa\otimes\sigma}$, a similar argument shows that $M_{\omega_T}\circ\R_{\kappa_T}\simeq \Res\circ M_{\kappa_T\otimes \sigma}$.  Let $\pi\in\mathfrak{R}_{\Ql}(\Omega)$.  By Lemma \ref{star} and adjointness, we have
\begin{align*}
M_{\omega}(\R_{\kappa_{m}}(\pi))&=\Hom_{B(\Lambda_E^m)}(\sigma,\R_{\kappa_{m}}(\pi))\\
&=\Hom_{B(\Lambda_E^m)}(\sigma,\R_{\kappa_{m}}(\pi))\\
&\simeq\Hom_{J}(\sigma,(\R_{\kappa_{m}}(\pi))^{J^1_{m}\P_1(\Lambda_E)/J^1_{m}})\\
&\simeq\Hom_{J}(\sigma,\R_{\kappa}(\pi))\\
&\simeq\Hom_{J^1}(\kappa\otimes\sigma,\pi)=M_{\kappa\otimes\sigma}(\pi).
\end{align*}

In the $\ell$-modular case, we choose lifts of $\kappa$ and $\chi$ and then by the $\ell$-adic isomorphism and reduction modulo-$\ell$ we have $\left[\R_{\kappa_{m}}( i^G_B(\chi))\right]\simeq\left[ i^{M(\Lambda_E^{m})}_{B(\Lambda_E^m)}(\R_{\kappa_T}(\chi))\right]$.  
\end{proof} 

\subsection{Parabolic induction, $\kappa$-restriction, and covers}\label{sect64}

Let $\chi$ be an irreducible character of $T$.  Let $(T^0,\lambda_T)$ be an $R$-type contained in $\chi$ such that $(J,\lambda)$ is a $G$-cover of $(T^0,\lambda_T)$ relative to $B$ as constructed in Lemma \ref{plcovers} with $\lambda=\kappa\otimes \sigma$ and $\lambda_T=\kappa_T\otimes\sigma$ where $\kappa_T=\Res^J_{T^0}(\kappa)$.  Hence $J=\P(\Lambda_E)J^1$ with $\P(\Lambda_E)$ a non-maximal parahoric subgroup of $\GE$ corresponding to the $\mathfrak{o}_E$-lattice sequence $\Lambda_E$.  In all cases, there are two non-conjugate maximal parahoric which contain $\P(\Lambda_E)$; we denote the $\mathfrak{o}_E$-lattice sequences that correspond to these by $\Lambda^x_E$ and $\Lambda_E^y$.  Let $m\in\{x,y\}$ and let $(\kappa_m,\Lambda^m_E)$ be the unique pair compatible with $(\kappa,\Lambda_E)$ as in Lemma \ref{compatbeta}. 

\begin{lemma}\label{lemma64}
Let $\pi$ be an irreducible subrepresentation or quotient of $i^G_B(\chi)$ and $m\in\{x,y\}$.  Then $\R_{\kappa_{m}}(\pi)\neq 0$.
\end{lemma}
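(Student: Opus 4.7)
The plan is to first show $\R_\kappa(\pi) \neq 0$ and then to deduce $\R_{\kappa_m}(\pi) \neq 0$ for $m \in \{x, y\}$ from Lemma \ref{star}: the identification $\R_\kappa(\pi) \simeq \Hom_{J^1_m \P_1(\Lambda_E)}(\kappa_m, \pi)$ places this nonzero space inside $\Hom_{J^1_m}(\kappa_m, \pi) = \R_{\kappa_m}(\pi)$.  To show $\R_\kappa(\pi) \neq 0$ I will instead establish $M_\lambda(\pi) \neq 0$ where $\lambda = \kappa \otimes \sigma$; the identification $M_\lambda(\pi) = \Hom_G(\ind_J^G \lambda, \pi) \simeq \Hom_{J/J^1}(\sigma, \R_\kappa(\pi))$ then forces $\R_\kappa(\pi) \neq 0$.

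I will treat the subrepresentation case directly, and then reduce the quotient case to it by duality.  Suppose first that $\pi \hookrightarrow i^G_B(\chi)$.  By Frobenius reciprocity, $\Hom_T(r^G_B(\pi), \chi) \neq 0$, so $\chi$ appears as a quotient, hence as a composition factor, of the finite-length $T$-representation $r^G_B(\pi)$, and consequently $\lambda_T = \chi|_{T^0}$ appears as a composition factor of the smooth finite-length $T^0$-representation $r^G_B(\pi)|_{T^0}$.  The key observation is now that the category of smooth finite-length $T^0$-representations decomposes blockwise by characters of $T^0$, with each block containing a unique simple which sits in the socle of every nonzero object in the block; applied here, the $\lambda_T$-block of $r^G_B(\pi)|_{T^0}$ is nonzero and has $\lambda_T$ in its socle, giving $M_{\lambda_T}(r^G_B(\pi)) \neq 0$.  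Theorem \ref{jacquetcommutes} combined with the faithfulness of $(j_B)^*$ (which is restriction of scalars along the injective algebra map $j_B$) then yields $M_\lambda(\pi) \neq 0$.

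For the quotient case, if $i^G_B(\chi) \twoheadrightarrow \pi$, taking contragredients gives $\widetilde\pi \hookrightarrow (i^G_B(\chi))^{\sim} \simeq i^G_B(\chi^{-1})$, and the whole setup dualises coherently: $(J, \widetilde\lambda)$ is a $G$-cover of $(T^0, \lambda_T^{-1})$ with $\widetilde\kappa$ the relevant $(-\beta)$-extension (as noted before Lemma \ref{contracommutes}).  Applying the subrepresentation case to $\widetilde\pi$ gives $\R_{\widetilde\kappa}(\widetilde\pi) \neq 0$, and since $\pi$ is admissible $\R_\kappa(\pi)$ is finite-dimensional; Lemma \ref{contracommutes}, which yields $(\R_\kappa(\pi))^{\sim} \simeq \R_{\widetilde\kappa}(\widetilde\pi)$, then forces $\R_\kappa(\pi) \neq 0$.

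The main obstacle is the block-decomposition step for $T^0$: in the $\ell$-modular setting $|T^0/T^1|$ can be divisible by $\ell$, so naive semisimplicity of $T^0$-representations fails and one must genuinely use that the blocks of $R[T^0/T^k]$ for sufficiently large $k$ are indexed by $\ell'$-characters and that the defining character of a block appears in the socle of every nonzero module in that block.  The remaining pieces---Frobenius reciprocity, Theorem \ref{jacquetcommutes}, Lemma \ref{star}, and Lemma \ref{contracommutes}---then assemble routinely.
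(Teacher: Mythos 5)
Your proof is correct, but it takes a genuinely different route to the key step of showing that $(T^0,\lambda_T)$ appears in the Jacquet module. The paper's proof is shorter and more direct: it observes (using the geometric lemma) that $r^G_B(\pi)$ is an unramified twist $\psi\chi$ of $\chi$, where $\chi^{w_x}=\psi\chi$ because $\lambda_T$ is $w_x$-fixed (non-regular) in the setting where this lemma gets applied. Since $\psi$ is unramified, $r^G_B(\pi)|_{T^0}$ is $\lambda_T$-isotypic, so $M_{\lambda_T}(r^G_B(\pi))\neq 0$ is immediate -- no socle argument and no duality are needed, and the subrepresentation and quotient cases are treated in one stroke. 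By contrast, you deliberately avoid invoking $\chi^{w_x}|_{T^0}=\lambda_T$: you use Frobenius reciprocity to place $\chi$ as a quotient of $r^G_B(\pi)$, promote $\lambda_T$ from subquotient to subrepresentation of $r^G_B(\pi)|_{T^0}$ via the block structure of $R[T^0]$-modules (each block a local algebra, so its unique simple lies in every nonzero socle), and then handle quotients by passing to contragredients via Lemma \ref{contracommutes}. Both proofs converge thereafter: Theorem \ref{jacquetcommutes} (with the evident faithfulness of restriction along the injective $j_B$) transfers the nonvanishing to $M_\lambda(\pi)$ and hence $\R_\kappa(\pi)\neq 0$, and Lemma \ref{star} gives $\R_{\kappa_m}(\pi)\neq 0$. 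Your argument is slightly heavier but is insensitive to whether $\lambda_T$ is regular, whereas the paper's hinges on the non-regularity built into the ambient setup (the regular case being trivial anyway, since then $\pi=i^G_B(\chi)$ manifestly contains $(J,\lambda)$). Everything you assert checks out; in particular the reduction from $\R_{\kappa}(\pi)\neq 0$ to $\R_{\kappa_m}(\pi)\neq 0$ via the inclusion $\Hom_{J^1_m\P_1(\Lambda_E)}(\kappa_m,\pi)\subseteq\Hom_{J^1_m}(\kappa_m,\pi)$ is exactly right.
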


\begin{proof}
By the geometric lemma, $r^G_B(i^G_B(\chi))$ is filtered by $\chi$ and $\chi^{w_x}=\psi\chi$ for some unramified character $\psi$.  Hence, by exactness of the Jacquet functor, $r^G_B(\pi)=\psi\chi$. By Theorem \ref{jacquetcommutes}, $\pi$ contains $(J,\lambda)$ if and only if $r^G_B(\pi)$ contains $(T^0,\lambda_T)$.     Thus $\pi$ contains $(J,\lambda)$ hence $\R_{\kappa}(\pi)\neq 0$. Therefore $\R_{\kappa_{m}}(\pi)\neq 0$.  \end{proof}

The next lemma is crucial in our proof of unicity of supercuspidal support.  It shows that parabolic induction  preserves the semisimple character up to transfer.

\begin{lemma}\label{lemmacus}
Suppose that $i^G_B(\chi)$ has an irreducible cuspidal subquotient $\pi$.  Then there exists $m\in\{x,y\}$ such that $\R_{\kappa_m}(\pi)\neq 0$.
\end{lemma}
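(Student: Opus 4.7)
The plan is to combine the exhaustion theorem for cuspidals (Theorem \ref{plcusps}) with the Mackey-type computation of Theorem \ref{pl=lz}.  Since $\pi$ is cuspidal, Theorem \ref{plcusps} gives a skew semisimple stratum $[\Lambda',n',0,\beta']$ with $\P(\Lambda'_{E'})$ a maximal parahoric of $G_{E'}$, a $\beta$-extension $\kappa'$, and an irreducible cuspidal $\sigma'$ of $\M(\Lambda'_{E'})$ with $\pi \simeq \I_{\kappa'}(\sigma')$.  The task then reduces to showing that, after $G$-conjugation if necessary, we may take $\beta'=\beta$, $\Lambda'\in\{\Lambda^x,\Lambda^y\}$, and $\kappa'$ compatible with $\kappa$ in the sense of Lemma \ref{betascompat} (so $\kappa'=\kappa_m$ for the corresponding $m$).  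Granting that, Theorem \ref{pl=lz} and Lemma \ref{lemma111} yield
\begin{equation*}
\R_{\kappa_m}(\pi)\simeq \R_{\kappa_m}\circ\I_{\kappa_m}(\sigma') \simeq \R^E_{\Lambda^m}\circ\I^E_{\Lambda^m}(\sigma') \simeq \sigma' \neq 0,
\end{equation*}
as required.

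To match the data, I would use that $(J,\lambda)$ is a $G$-cover of $(T^0,\lambda_T)$ (Lemma \ref{plcovers}) and $\chi$ contains $\lambda_T$, so $i^G_B(\chi)$ contains the type $(J,\lambda)$; in particular its restriction to $H^1(\Lambda,\beta)$ carries the semisimple character $\theta\in\mathcal{C}_{-}(\Lambda,\beta)$ embedded in $\kappa$ via $\eta=\Res^J_{J^1}\kappa$.  Since $H^1$ is pro-$p$ and $\ell\neq p$, the $\theta$-isotypic projection is exact.  Arguing that the composition factor $\pi$ of $i^G_B(\chi)$ must contain a transfer of $\theta$, the intertwining property of semisimple characters (Stevens \cite[Proposition 3.32]{St05}) then pins down $\beta'$ to $G$-conjugacy with $\beta$ and forces $\Lambda'$ into the building of $\GE$.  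Because $\P(\Lambda_E)$ is non-maximal and its only two non-conjugate maximal overgroup parahorics in $\GE$ are $\P(\Lambda^x_E)$ and $\P(\Lambda^y_E)$, we can take $\Lambda'=\Lambda^m$ for some $m\in\{x,y\}$ and invoke Lemma \ref{betascompat} to replace $\kappa'$ by the unique compatible $\kappa_m$.

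The hard part will be the endo-class preservation step: showing that a composition factor of $i^G_B(\chi)$ must contain a transfer of $\theta$, and in particular that the cuspidal $\pi$ does not have vanishing $\theta$-isotype.  The clean abstract route is to refine the level decomposition of $\mathfrak{R}_R(G)$ from Vign\'eras \cite[II 5.8]{Vig96} by endo-class and check that parabolic induction respects it; for $\U(2,1)$ only a handful of endo-classes appear at each level, so this should be tractable.  A more computational alternative uses Theorem \ref{SZadapt}: since $\R_{\kappa_m}$ is exact (the $\kappa_m$-isotypic functor on the pro-$p$ group $J^1_m$ is extracted by an idempotent when $\ell\neq p$), the identity $[\R_{\kappa_m}(i^G_B(\chi))]=[i^{\M(\Lambda^m_E)}_{B(\Lambda^m_E)}(\R_{\kappa_T}(\chi))]$ decomposes into composition factors on both sides; comparing with the known cuspidal subquotients of the finite-group principal series $i^{\M(\Lambda^m_E)}_{B(\Lambda^m_E)}(\R_{\kappa_T}(\chi))$, and using Lemma \ref{lemma64} to account for the sub and quotient in $\mathfrak{Gr}_R(\M(\Lambda^m_E))$, one forces the cuspidal $\pi$ to contribute cuspidally to $[\R_{\kappa_m}(i^G_B(\chi))]$ for at least one of $m=x,y$.
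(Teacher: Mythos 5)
You have identified the right skeleton: Theorem \ref{plcusps} gives the datum $(\kappa',\sigma')$ with $\pi\simeq\I_{\kappa'}(\sigma')$, and once one knows the underlying semisimple character of $\pi$ matches (a transfer of) that of the cover, the conclusion $\R_{\kappa_m}(\pi)\neq 0$ does follow via Theorem \ref{pl=lz} and Lemma \ref{lemma111}. You also correctly identify the crux — showing that the composition factor $\pi$ of $i^G_B(\chi)$ contains a transfer of $\theta$ — as ``the hard part.'' But this is precisely the content of the lemma, and neither of your two proposed routes supplies a proof.

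Your Route 1 amounts to asserting a semisimple-type / endo-class block decomposition of $\mathfrak{R}_R(G)$ respected by parabolic induction. That is a substantial theorem in its own right (the analogue for classical groups of the Bushnell--Kutzko--Henniart results for $\GL_n$) and is not available from the tools in the paper; saying ``only a handful of endo-classes appear at each level, so this should be tractable'' is a hope, not an argument. Your Route 2 via Theorem \ref{SZadapt} has a genuine logical gap: that theorem gives an equality of semisimplifications $[\R_{\kappa_m}(i^G_B(\chi))]=[i^{\M(\Lambda^m_E)}_{B(\Lambda^m_E)}(\R_{\kappa_T}(\chi))]$ for the \emph{specific} $\kappa_m$ compatible with the cover datum $\kappa$ attached to $\chi$. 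Since $\R_{\kappa_m}$ can kill irreducibles, the identity cannot detect whether the given cuspidal subquotient $\pi$ contributes or is simply invisible because it carries a semisimple character from a different endo-class; it only shows that \emph{some} constituent of $i^G_B(\chi)$ has nonzero $\kappa_m$-isotype with cuspidal image, which is much weaker and does not rule out a hidden cuspidal subquotient of exotic endo-class. Lemma \ref{lemma64} is of no help here because the cuspidal $\pi$ is an interior composition factor, not the sub or quotient. The paper instead proves the matching directly: it lifts to characteristic zero to rule out the compact-$G_{E'}$ case (so $[\Lambda',n',0,\beta']$ must be a scalar simple or a non-simple $1{+}2$-split semisimple stratum), reduces by an induction on level (twisting by characters to strip off scalar pieces of the strata), reduces to the case where both strata are non-simple semisimple with matching splittings, and then matches the underlying semisimple characters explicitly by comparing the Iwahori decompositions of the ambient $\widetilde{H}^1$-groups in $\GL_3(F)$ together with the transfer maps $\tau_{\Lambda,\Lambda',\beta}$. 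None of that work is reproduced or replaced in your proposal, so the central step of the lemma remains unproved.
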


\begin{proof}
By Theorem \ref{plcusps} there exist a skew semisimple stratum $[\Lambda',n',0,\beta']$ such that $\P(\Lambda'_{E'})$ is a maximal parahoric subgroup of $G_{E'}$ where $G_{E'}$ denotes the $G$-centraliser of $\beta'$, a semisimple character $\theta'\in\mathcal{C}_{-}(\Lambda',\beta')$, a $\beta'$-extension $\kappa'$ to $J'=J(\Lambda',\beta')$ of the unique Heisenberg representation $\eta'$ containing $\theta'$ and a cuspidal representation $\sigma'\in\Irr(J'/(J')^1)$ such that $\pi\simeq \I_{\kappa'}(\sigma')$.

In Lemma \ref{plcovers}, the covers we constructed fall into two classes according to whether
\begin{enumerate}
\item $[\Lambda,n,0,\beta]$ is a scalar skew simple stratum in $A$, or
\item $[\Lambda,n,0,\beta]$ is a skew semisimple stratum in $A$ with splitting $V=V_1\oplus V_2$ with $V_1$ one-dimensional and $V_2$ two dimensional hyperbolic, $\beta=\beta_1\oplus\beta_2$ with $\beta_1$ and $\beta_2$ skew scalars, $G_E\simeq \U(1,1)(F/F_0)\times\U(1)(F/F_0)$, and $\P(\Lambda_{2,E})$ an Iwahori subgroup of $\U(1,1)(F/F_0)$.
\end{enumerate}

As $\pi$ contains $\kappa'\otimes\sigma'$, the restriction of $i^G_B(\chi)$ to $J'$ has $\kappa'\otimes\sigma'$ as a subquotient.  We choose $\widehat{\chi}$ an $\ell$-adic character lifting $\chi$ such that $i^G_B(\widehat{\chi})$ is reducible.  Then, because restriction and parabolic induction commute with reduction modulo-$\ell$,  the restriction of $i^G_B(\widehat{\chi})$ to $J'$ has an irreducible subquotient $\delta$ such that $r_{\ell}(\delta)$ contains $\kappa'\otimes \sigma'$. On restricting to $(J')^1$ we see that $\delta$ contains the unique lift $\widehat{\eta'}$ of $\eta'$ and, since $\delta$ is irreducible and $J'$ normalises $\widehat{\eta'}$, $\Res^{J'}_{(J')^1}(\delta)$ is a multiple of $\eta'$.  Thus $\delta=\widehat{\kappa'}\otimes \xi$ with $\widehat{\kappa'}$ a lift of $\kappa'$ and $\xi$ an irreducible representation of $J'/(J')^1$ whose reduction modulo-$\ell$ contains $\sigma$.  However, $\xi$ cannot be cuspidal; otherwise $i^G_B(\widehat{\chi})$ would have a cuspidal subquotient $\I_{\widehat{\kappa'}}(\xi)$.  Hence $G_{E'}$ is not compact. Therefore $[\Lambda',n',0,\beta']$ is also either a scalar skew simple stratum or a skew semisimple stratum with splitting $V=V'_1\oplus V'_2$ with $V'_1$ one-dimensional and $V'_2$ two-dimensional hyperbolic.  (Note that, as $\sigma$ is cuspidal non-supercuspidal, we must have $\ell\mid q+1$ or $\ell\mid q^2-q+1$ by Section \ref{cuspU11U21}.)

We continue by induction on the level $l(\pi)$ of $\pi$.  

The base step is when $\pi$ has level zero.  If $\pi$ has level zero then, as all subquotients of $i^G_B(\chi)$ have the same level as $\chi$ by Vign\'eras \cite[5.12]{Vig96}, $\chi$ and $i^G_B(\chi)$ have level zero.  Thus we can choose, and assume that we have chosen, $\kappa'$, $\kappa$ and $\kappa_T$ to be trivial. By conjugating, we may assume $\Lambda'=\Lambda_m$ for some $m\in\{x,y\}$ and then $\kappa_m=\kappa'$ is trivial and $\R_{\kappa_m}(\pi)=\R_{\Lambda_m}(\pi)\neq 0$. 

Suppose first that $[\Lambda,n,n-1,\beta]$ is equivalent to a scalar stratum $[\Lambda,n,n-1,\gamma]$.  The stratum $[\Lambda,n,n-1,\gamma]$ corresponds to a character $\psi_{\gamma}$ of $U_n(\Lambda)\cap G$ which extends to a character $\phi\circ\det$ of $G$.  Twisting by $\phi^{-1}\circ \det$ we reduce the level of $\pi$ and the level of $i^G_B(\chi)$.   The stratum $[\Lambda,n,n-1,\beta-\gamma]$ is equivalent to a semisimple stratum $[\Lambda,n,n-1,\alpha]$ and the representations  $\kappa(\phi^{-1}\circ\det)$, $\kappa_T(\phi^{-1}\circ\det)$ and $\kappa_m(\phi^{-1}\circ\det)$ for $m\in\{x,y\}$ are $\alpha$-extensions defined on the relevant groups.  Similarly, the stratum $[\Lambda',n',n'-1,\beta'-\gamma]$ is equivalent to a semisimple stratum $[\Lambda,n',n'-1,\alpha']$ and $\kappa'(\phi^{-1}\circ\det)$ is a $\alpha'$-extension.  Moreover, $\kappa_m(\phi^{-1}\circ\det)$ is compatible with $\kappa(\phi^{-1}\circ\det)$ for $m\in\{x,y\}$, $(\kappa\otimes\sigma)(\phi^{-1}\circ\det)$ is a $G$-cover of  $(\kappa_T\otimes\sigma)(\phi^{-1}\circ\det)$ relative to $B$,  $(\kappa_T\otimes\sigma)(\phi^{-1}\circ\det)$ is contained in $\chi(\phi^{-1}\circ\det)$, and $(\kappa'\otimes\sigma')(\phi^{-1}\circ\det)$ is contained in $\pi(\phi^{-1}\circ\det)$.  Thus, by induction, we have
\begin{align*}
 \R_{\kappa_m}(\pi)\simeq  \R_{\kappa_m(\phi^{-1}\circ\det)}(\pi(\phi^{-1}\circ\det))
\end{align*}
is non-zero for some $m\in\{x,y\}$.

Suppose now that $[\Lambda',n',n'-1,\beta']$ is equivalent to a scalar stratum $[\Lambda',n',n'-1,\gamma']$.  As in the last case, we can twist by a character to reduce the level.

Hence we may assume that both $[\Lambda,n,n-1,\beta]$ and $[\Lambda',n',n'-1,\beta']$ are not equivalent to scalar simple strata.  This forces $[\Lambda,n,0,\beta]$ (resp. $[\Lambda',n',0,\beta']$) to be semisimple (non-simple) with splitting $V=V_1\oplus V_2$ (resp. $V=V'_1\oplus V'_2$) with $V_1$ (resp. $V'_1$) one-dimensional and $V_2$ (resp. $V'_2$) two dimensional hyperbolic.  Thus, by conjugation we may assume that the splitting of $[\Lambda',n',0,\beta']$ is the same as the splitting of $[\Lambda,n,0,\beta]$, i.e. $V_1'=V_1$ and $V_2'=V_2$.  We have $E=E'$ and $\GE=G_{E'}$ and conjugating further we may assume that $\Lambda'_E$ and $\Lambda_E$ lie in the closure of the same chamber of the building of $\GE$.  Moreover, $\Lambda'_E$ is a vertex and $\Lambda_E$ is the barycentre of the chamber. 

Let 
\[w=\begin{pmatrix}0&1&0\\1&0&0\\0&0&1\end{pmatrix}.\]  
We have
\begin{equation*}
J(\beta',\Lambda)=w\begin{pmatrix}
\begin{tikzpicture}[every node/.style={minimum width=1.5em}]
\matrix (m1) [matrix of math nodes]
{ 
\mathfrak{A}_0(\Lambda)^{11} \\
};
\node[scale=1, anchor=north west] (iron) at (m1-1-1.south east) {$\mathfrak{A}_{0}(\Lambda)^{22}$};
\node[scale=1,anchor=east] (iron2) at (iron.west) {$\mathfrak{A}_{\lfloor \frac{r'+1}{2}\rfloor}(\Lambda)^{12}$};
\node[scale=1, anchor= west] (pri) at (m1-1-1.east) {$\mathfrak{A}_{\lfloor \frac{r'+1}{2}\rfloor}(\Lambda)^{12}$};
\draw (m1-1-1.north east) -- (iron2.south east);
\draw (m1-1-1.south west) -- (pri.south east);
\end{tikzpicture}
\end{pmatrix}w \cap G,
\end{equation*}
and
\begin{equation*}\quad J(\beta,\Lambda)=w\begin{pmatrix}
\begin{tikzpicture}[every node/.style={minimum width=1.5em}]
\matrix (m1) [matrix of math nodes]
{ 
\mathfrak{A}_0(\Lambda)^{11} \\
};
\node[scale=1, anchor=north west] (iron) at (m1-1-1.south east) {$\mathfrak{A}_{0}(\Lambda)^{22}$};
\node[scale=1,anchor=east] (iron2) at (iron.west) {$\mathfrak{A}_{\lfloor \frac{r+1}{2}\rfloor}(\Lambda)^{12}$};
\node[scale=1, anchor= west] (pri) at (m1-1-1.east) {$\mathfrak{A}_{\lfloor \frac{r+1}{2}\rfloor}(\Lambda)^{12}$};
\draw (m1-1-1.north east) -- (iron2.south east);
\draw (m1-1-1.south west) -- (pri.south east);
\end{tikzpicture}
\end{pmatrix}w \cap G,
\end{equation*}
where $r'$ (resp. $r$) is minimal such that $[\Lambda,n',r',\beta]$ (resp. $[\Lambda,n,r,\beta]$) is equivalent to a scalar stratum. Thus, as we are now assuming that $[\Lambda,n,n-1,\beta]$ and $[\Lambda',n',n'-1,\beta']$ are not equivalent to scalar simple strata, we have $r'=n'$ and $r=n$.  Furthermore, we have $l(\chi)=l(\pi)$, i.e. $n'/e(\Lambda')=n/e(\Lambda)$.  We let $\kappa''$ be the unique $\beta$-extension to $J(\beta',\Lambda)$ compatible with $\kappa'$ relative to a semisimple stratum $[\Lambda,n'',0,\beta']$ .  By direct computation, $n''=n$.  Hence $J(\beta,\Lambda)=J(\beta',\Lambda)$.  Similarly considerations show that $H(\beta,\Lambda)=H(\beta',\Lambda)$ and $J(\beta,\Lambda')=J(\beta',\Lambda')$.

 As $\xi$ is not cuspidal, it is direct factor of $i_{B(\Lambda'_E)}^{M(\Lambda'_E)}(\widehat{\tau'})$, where we choose $B(\Lambda'_E)$ to be the image of $P(\Lambda_E)$ in $M(\Lambda'_E)$, for some representation $\widehat{\tau'}$ of $T(\Lambda'_E)$. Furthermore, $i^G_B(\widehat{\chi})$ contains $\widehat{\kappa''}\otimes \widehat{\tau'}$ with $\widehat{\kappa''}$ a lift of $\kappa''$ by Lemma \ref{compatbeta} and transitivity of induction. By Theorem \ref{plcovers} $(J,\widehat{\kappa''}\otimes\widehat{\tau'})$ is a $G$-cover of $(T^0,\widehat{\kappa''}_T\otimes\widehat{\tau'})$ relative to $B$ where $\widehat{\kappa''}_T=\Res^J_{T^0}(\widehat{\kappa''})$.  By Blondel  \cite[Theorem 2]{blondel}, $\ind_J^G(\widehat{\kappa''}\otimes\widehat{\tau'})\simeq \Ind_{B^{op}}^G(\ind^T_{T_0}(\widehat{\kappa''}_T\otimes\widehat{\tau'}))$.  By second adjunction of parabolic induction and right adjunction of restriction with compact induction we have
\[\Hom_{T^0}(\widehat{\kappa''}_T\otimes\widehat{\tau'}, r^G_B\circ i^G_B(\widehat{\chi}))\simeq \Hom_G(\ind_J^G(\widehat{\kappa''}\otimes\widehat{\tau'}),i^G_B(\widehat{\chi}))\neq 0.\]
 We have $[r^G_B\circ i^G_B(\widehat{\chi})\mid_{T^0}]=\widehat{\chi}\oplus\widehat{\chi}^{w_x}\mid_{T^0}=\widehat{\chi}\oplus\widehat{\chi}\mid_{T^0}$. Hence $\widehat{\kappa''}_T\otimes\widehat{\tau'}=\Res^T_{T^0}(\widehat{\chi})$.   Similarly if we let $\widehat{\kappa}$ be a lift of $\kappa$, $\widehat{\sigma}$ a lift of $\sigma$, and $\widehat{\kappa}_T=\Res^T_{T^0}(\widehat{\kappa})$ then we have $\widehat{\kappa}_T\otimes \widehat{\sigma}=\Res^J_{T^0}(\widehat{\chi})$.  This implies that we have an equality of semisimple characters $\tau_{\Lambda',\Lambda,\beta'}(\widehat{\theta'})=\widehat{\theta}$ where $\widehat{\theta'}\in\mathcal{C}_{-}(\beta',\Lambda')$ is contained in $\widehat{\kappa'}$ and $\widehat{\theta}\in\mathcal{C}(\beta,\Lambda)$ is contained in $\widehat{\kappa}$.
 
 We let $\widetilde{H}(\beta,\Lambda)$ (resp. $\widetilde{H}^1(\beta',\Lambda')$) denote the compact open subgroup of $\GL_3(F)$ defined in Stevens \cite{St08} which defines $H(\beta,\Lambda)$ (resp. $H(\beta',\Lambda')$) by intersecting with $\U(2,1)(F/F_0)$.  The Iwahori decomposition for $\widetilde{H}^1(\beta',\Lambda')$ gives $\widetilde{H}^1(\beta',\Lambda')=\widetilde{H}^1(\beta',\Lambda')^{-}(\widetilde{H}^1(\beta',\Lambda')\cap \widetilde{M}) \widetilde{H}^1(\beta',\Lambda')^+$ where $\widetilde{H}^1(\beta',\Lambda')^{-}$ denotes the lower triangular unipotent matrices in $\widetilde{H}^1(\beta',\Lambda')$, $\widetilde{H}^1(\beta',\Lambda')^+$ denotes the upper triangular unipotent matrices in $\widetilde{H}^1(\beta',\Lambda')$, and $\widetilde{M}$ the subgroup of diagonal matrices.  As $\widetilde{H}^1(\beta',\Lambda)$ contains  $(\widetilde{H}^1(\beta',\Lambda')\cap \widetilde{M})$ and is contained in $\widetilde{H}^1(\beta',\Lambda')$ we have 
 \[\widetilde{H}^1(\beta',\Lambda')=\widetilde{H}^1(\beta',\Lambda')^{-}(\widetilde{H}^1(\beta',\Lambda')\cap \widetilde{H}^1(\beta',\Lambda)) \widetilde{H}^1(\beta',\Lambda')^+.\] 
 Thus a character of $\widetilde{H}^1(\beta',\Lambda')$ is determined by its values on $\widetilde{H}^1(\beta',\Lambda')^{-}$, $(\widetilde{H}^1(\beta',\Lambda')\cap \widetilde{H}^1(\beta',\Lambda))$, and $\widetilde{H}^1(\beta',\Lambda')^+$.
 
 The semisimple characters $\widehat{\theta}$ and $\widehat{\theta'}$ are equal to the restriction of semisimple characters $\widetilde{\theta}$ and $\widetilde{\theta'}$ of $\GL_3(F)$.  Moreover $\tau_{\Lambda',\Lambda,\beta'}(\widetilde{\theta'})=\widetilde{\theta}$ as $\tau_{\Lambda',\Lambda,\beta'}(\widehat{\theta'})=\widehat{\theta}$. It follows from the decomposition of $\widetilde{H}^1(\beta',\Lambda')$ given above that $\tau_{\Lambda,\Lambda',\beta}(\widetilde{\theta})=\widetilde{\theta'}$; they are both trivial on $\widetilde{H}^1(\beta',\Lambda')^{-}$ and $\widetilde{H}^1(\beta',\Lambda')^+$, and as $\theta'=\tau_{\Lambda,\Lambda',\beta'}(\theta)$ they both agree with $\theta$ on $(\widetilde{H}^1(\beta,\Lambda')\cap \widetilde{H}^1(\beta,\Lambda))=(\widetilde{H}^1(\beta',\Lambda')\cap \widetilde{H}^1(\beta',\Lambda))$.  Hence $\tau_{\Lambda,\Lambda',\beta}(\theta)=\theta'$ by restriction and reduction modulo-$\ell$.  As there is a unique Heisenberg representation containing $\theta'$ we have $\R_{\kappa_m}(\pi)\neq 0$ for some $m\in\{x,y\}$.
 \end{proof}
Lemma \ref{compatbeta} and transitivity of induction $i^G_B(\widehat{\chi})$ contains $\widehat{\kappa}\otimes\delta$ with $\widehat{\kappa}$ the unique $\beta_{M}$-extension of $\tau_{\Lambda^{M},\Lambda,\beta_{M}}(\theta_{M})$ compatible with $\widehat{\kappa}_{M}$.  

\begin{remark}\label{scusplift}
Let $\I_{\kappa'}(\sigma')$ be an irreducible cuspidal representation of $G$ as constructed in Theorem \ref{plcusps}. As a corollary to the proof of Lemma \ref{lemmacus}, we see that $\I_{\kappa'}(\sigma')$ is supercuspidal if and only if $\sigma'$ is supercuspidal.  Hence all supercuspidal representations of $G$ lift by Section \ref{cuspU11U21}.
\end{remark}

\begin{lemma}\label{lemma65}
Suppose that $i^G_B(\chi)$ is reducible with irreducible subrepresentation $\pi_1$ and quotient $\pi_2=i^G_B(\chi)/\pi_1$. If $\Sigma$ is a maximal cuspidal subquotient of $\R_{\kappa_m}(i^G_B(\chi))$, i.e. all subquotients of $\R_{\kappa_m}(i^G_B(\chi))$ not contained in $\Sigma$ are not cuspidal, then $\I_{\kappa_m}(\Sigma)$ is a subrepresentation of $\pi_c$. \end{lemma}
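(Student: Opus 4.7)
The plan is to produce a nonzero (and hence injective) morphism $\I_{\kappa_m}(\Sigma) \to \pi_c$ by adjunction, after first identifying $\Sigma$ inside $\R_{\kappa_m}(\pi_c)$. The three main tools are: (a) Theorem~\ref{plcusps}(1), which guarantees that $\I_{\kappa_m}(\tau)$ is irreducible cuspidal whenever $\tau$ is an irreducible cuspidal representation of $\M(\Lambda_E^m)$ (applicable because $\P(\Lambda_E^m)$ is a maximal parahoric of $\GE$ by the construction preceding Lemma~\ref{compatbeta}); (b) the Frobenius adjunction
\[
\Hom_G\!\left(\I_{\kappa_m}(\tau),\, \pi\right) \simeq \Hom_{\M(\Lambda_E^m)}\!\left(\tau,\, \R_{\kappa_m}(\pi)\right);
\]
and (c) exactness of $\R_{\kappa_m} = \Hom_{J^1}(\kappa_m, -)$, valid because $J^1$ is pro-$p$ and $\ell \neq p$.

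Write $V = i^G_B(\chi)$ and fix a realization $\pi_c = W_1/W_0$ of the maximal cuspidal subquotient, so that $W_0 \subseteq W_1 \subseteq V$ and neither $W_0$ nor $V/W_1$ has a cuspidal subquotient. The crux is to show that $\R_{\kappa_m}(W_0)$ and $\R_{\kappa_m}(V/W_1)$ have no irreducible cuspidal $\M(\Lambda_E^m)$-composition factors. For $W_0$ I would argue by contradiction: an irreducible cuspidal $\tau$ sitting in the socle of $\R_{\kappa_m}(W_0)$ would, via the adjunction in (b) and the irreducibility of $\I_{\kappa_m}(\tau)$ from (a), furnish an embedding $\I_{\kappa_m}(\tau) \hookrightarrow W_0$, contradicting the hypothesis on $W_0$. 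A dual argument, passing to contragredients via Lemma~\ref{contracommutes} (which turns the cosocle into a socle and the quotient $V/W_1$ into a subrepresentation), disposes of $V/W_1$.

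Combined with the exactness of $\R_{\kappa_m}$ applied to $0 \to W_0 \to W_1 \to \pi_c \to 0$ and to $0 \to W_1 \to V \to V/W_1 \to 0$, all irreducible cuspidal composition factors of $\R_{\kappa_m}(V)$ then lie in $\R_{\kappa_m}(\pi_c)$. The maximality hypothesis on $\Sigma$ lets me lift $\Sigma$ to a subquotient $S/T$ of $\R_{\kappa_m}(\pi_c)$; adjunction and exactness of $\I_{\kappa_m}$ convert the inclusion $S \hookrightarrow \R_{\kappa_m}(\pi_c)$, together with its restriction to $T$, into a map $\I_{\kappa_m}(\Sigma) = \I_{\kappa_m}(S)/\I_{\kappa_m}(T) \to \pi_c$. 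Injectivity follows because $\R_{\kappa_m} \circ \I_{\kappa_m}(\tau) \simeq \tau$ for irreducible cuspidal $\tau$ (Theorem~\ref{pl=lz} combined with Lemma~\ref{lemma111}), which forces the induced map on cuspidal composition factors to be an isomorphism.

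The principal obstacle is the subquotient-to-subrepresentation juggling: upgrading the given data ``$\Sigma$ is a cuspidal subquotient of $\R_{\kappa_m}(V)$'' into a genuine morphism into $\pi_c$. The resolution is to chase the filtration $W_0 \subseteq W_1 \subseteq V$ through the exact functors $\R_{\kappa_m}$ and $\I_{\kappa_m}$ simultaneously and to exploit cuspidal socles on the finite-group side, where the finiteness of $\M(\Lambda_E^m)$ makes the category of its $R$-representations well-behaved enough that irreducible cuspidal subquotients can always be produced as subrepresentations of appropriate pieces.
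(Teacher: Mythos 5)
Your proposal diverges from the paper's argument at two substantive points, one of them a genuine gap.

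First, you have misidentified the target object. The $\pi_c$ in the statement is the quotient $\pi_2 = i^G_B(\chi)/\pi_1$, not a maximal cuspidal subquotient $W_1/W_0$ of $i^G_B(\chi)$. (This is confirmed by the way Lemma~\ref{lemma65} is applied in the proof of Theorem~\ref{urprincipaldecomp}: there the conclusion is that $\I_{\Lambda_w}(\Sigma_w)$ embeds in $\St_G = i^G_B(\delta_B^{-1/2})/1_G$, not into some smaller cuspidal slice of $\St_G$.) Because of this you end up proving a different, and in fact incomparable, statement: an embedding into $W_1/W_0$ does not automatically yield an embedding into $V/\pi_1$.

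Second, and more seriously, your central claim --- that $\R_{\kappa_m}(W_0)$ and $\R_{\kappa_m}(V/W_1)$ have \emph{no} cuspidal composition factors --- does not follow from the socle and cosocle arguments you give. An irreducible cuspidal $\tau$ in the socle of $\R_{\kappa_m}(W_0)$ is ruled out by adjunction exactly as you say, and similarly for the cosocle via Lemma~\ref{contracommutes}; but this leaves open the possibility of a cuspidal composition factor buried in the middle of $\R_{\kappa_m}(W_0)$, neither in the socle nor the cosocle. Section~\ref{cuspU11U21} shows this phenomenon actually occurs: when $\ell\mid q+1$ the finite principal series $i^{\M}_{B}(\overline{1})$ is uniserial with the non-cuspidal factor on top and bottom and a cuspidal factor strictly in between, so a general subquotient of it can perfectly well have a cuspidal factor with non-cuspidal socle and cosocle. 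To dispose of this, the paper's proof invokes a counting constraint coming from Theorem~\ref{SZadapt} together with the explicit structure of the finite principal series: $\R_{\kappa_m}(i^G_B(\chi))$ has exactly two non-cuspidal irreducible constituents, each with multiplicity one, so $\R_{\kappa_m}(\pi_1)$ must consist of a single non-cuspidal factor $\rho_1$, and $\R_{\kappa_m}(\pi_2)$ has no cuspidal \emph{quotient}. It then reads off directly from the known composition series of the finite principal series (Section~\ref{cuspU11U21}) that $\Sigma$ must be a genuine \emph{subrepresentation} of $\R_{\kappa_m}(\pi_2)$ --- with the additional care, noted in the paper, that Theorem~\ref{SZadapt} only gives a Grothendieck-group isomorphism, so one either uses irreducibility of $\Sigma$ or Lemma~\ref{theorem2}. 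Once $\Sigma\hookrightarrow\R_{\kappa_m}(\pi_2)$, adjunction plus $\R_{\kappa_m}\circ\I_{\kappa_m}\simeq\operatorname{id}$ on cuspidals gives the injective map $\I_{\kappa_m}(\Sigma)\hookrightarrow\pi_2$. Your final step --- producing the map as $\I_{\kappa_m}(S)/\I_{\kappa_m}(T)\to\pi_c$ --- also needs justification: the map $\I_{\kappa_m}(S)\to\pi_c$ obtained from adjunction need not kill $\I_{\kappa_m}(T)$, and you do not supply an argument for why it does.

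Your adjunction and contragredient manipulations are sound and are indeed used in the paper's proof for the analysis of $\pi_1$, where irreducibility of $\pi_1$ makes them decisive. But without the finite-group counting they do not control the full composition series on the finite side, and that is the crux of the lemma.
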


\begin{proof}
Let $\Sigma$ be a maximal cuspidal subquotient of $\R_{\kappa_m}(i^G_B(\chi))$.  By Lemma \ref{lemma64}, $\R_{\kappa_{m}}(\pi_1)$ and $\R_{\kappa_{m}}(\pi_2)$ are non-zero and must contain non-cuspidal subquotients as $\pi_1$ and $\pi_2$ are not cuspidal.  However by Theorem \ref{SZadapt} and Section \ref{cuspU11U21}, there are only two non-cuspidal subquotients of $\R_{\kappa_m}(i^G_B(\chi))$.  Thus both $\R_{\kappa_{m}}(\pi_1)$ and $\R_{\kappa_{m}}(\pi_2)$ must have a single non-cuspidal irreducible subquotient, say $\rho_1$ and $\rho_2$. 

If $\R_{\kappa_m}(\pi_1)\neq \rho_1$ then $\R_{\kappa_m}(\pi_1)$ has an irreducible cuspidal subrepresentation or an irreducible cuspidal quotient. If $\R_{\kappa_m}(\pi_1)$ has an irreducible cuspidal subrepresentation $\sigma$ then, by adjointness of $\R_{\kappa_m}$ and $\I_{\kappa_m}$, $\I_{\kappa_m}(\sigma)$ is an irreducible cuspidal subrepresentation of $\pi_1$ contradicting the irreducibility and non-cuspidality of $\pi_1$. If $\R_{\kappa_m}(\pi_1)$ has an irreducible cuspidal quotient $\sigma$ then $\R_{\widetilde{\kappa_m}}(\widetilde{\pi_1})$ has a cuspidal subrepresentation $\widetilde{\sigma}$ by Lemma \ref{contracommutes}. Thus $\I_{\widetilde{\kappa_m}}(\widetilde{\sigma})$ is an irreducible cuspidal subrepresentation of $\widetilde{\pi_1}$ by adjointness.  Hence $\I_{\kappa_m}({\sigma})$ is an irreducible cuspidal quotient of $\pi_1$ by Lemma \ref{contracommutes} contradicting the irreducibility and non-cuspidality of $\pi_1$.  Thus $\R_{\kappa_m}(\pi_1)=\rho_1$.

Similarly, if $\R_{\kappa_m}(\pi_2)$ has an irreducible cuspidal quotient $\sigma$, then $\I_{\kappa_m}(\sigma)$ is an irreducible cuspidal quotient of $\pi_2$.  Hence $\I_{\kappa_m}(\sigma)$ is a quotient of $i^G_B(\chi)$ contradicting the cuspidality of $\I_{\kappa_m}(\sigma)$.  Hence $\R_{\kappa_m}(\pi_2)$ can have no cuspidal quotients. Hence, by Section \ref{cuspU11U21}, Lemma \ref{theorem2} and Theorem \ref{SZadapt}, $\Sigma$ is a subrepresentation of $\R_{\kappa_m}(\pi_2)$. , Note that, as Theorem \ref{SZadapt} only gives us an isomorphism in the Grothendieck group of finite length representations of $\M(\Lambda_E)$ we have used that $\Sigma$ is irreducible by Section \ref{cuspU11U21} in the skew semisimple non-scalar case to imply it is a subrepresentation of $\R_{\kappa_m}(\pi_2)$, in all other cases we twist by a character (if necessary) and use Lemma \ref{theorem2}. By reciprocity, $\I_{\kappa_m}(\Sigma)$ is a subrepresentation of $\pi_2$.  
\end{proof}

By Blondel \cite[Theorem 2]{blondel} and Lemma \ref{plcovers}, $\I_{\kappa}(\sigma)\simeq \Ind^G_{B^{op}}(\ind_{T^0}^T(\kappa_T\otimes\sigma)).$ By second adjunction, (\emph{cf.} Dat \cite[Corollaire $3.9$]{finitude}) $\Hom_G(\Ind^G_{B^{op}}(\ind_{T^0}^T(\kappa_T\otimes\sigma)),\pi)\simeq \Hom_T(\ind_{T^0}^T(\kappa_T\otimes\sigma),r^G_B(\pi))$.  By Clifford theory, the irreducible quotients of $\ind_{T^0}^T(\kappa_T\otimes\sigma)$ are all the twists of $\chi$ by an unramified character. Hence  $\pi$ is an irreducible quotient of $\I_{\kappa}(\sigma)$ if and only if it is an irreducible quotient of $i^G_B(\chi\psi)$ for some unramified character $\psi$ of $T$.

The $R$-type $(J, \lambda)$ is quasi-projective by Theorem \ref{plcusps}, hence a simple module of $\mathcal{H}(G,\lambda)$ corresponds to an irreducible quotient of $i^G_B(\chi\psi)$ for some unramified character $\psi$ by the bijection of Theorem \ref{qplemma}.  If $i^G_B(\chi\psi)$ is reducible with proper quotient $\pi$, then  the Jacquet module of $\pi$ is one-dimensional by the geometric lemma,.  Hence, by Theorem \ref{jacquetcommutes}, $\pi$ must correspond to a character of $\mathcal{H}(G,\lambda)$ under the bijection of Theorem \ref{qplemma} and all characters of $\mathcal{H}(G,\lambda)$ must correspond to a proper quotient of a reducible principle series representation $i^G_B(\chi\psi)$ with $\psi$ an unramified character of $T$.

\begin{lemma}\label{semisimple}
Suppose $\ell\neq 2$ and $\ell\mid q-1$.  Then $i^G_B(\chi)$ is semisimple.
\end{lemma}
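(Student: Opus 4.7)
The plan is to reduce to the reducible case, establish a length bound via $\kappa$-restriction, and then decompose the associated Hecke algebra module explicitly.

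I may assume $i_B^G(\chi)$ is reducible, so Theorem \ref{redpoints} places $\chi$ in one of three families and renders $\lambda_T := \Res_{T^0}^T(\chi)$ non-regular in the sense of Theorem \ref{Heckealgdesc}. Build the $G$-cover $(J, \lambda = \kappa \otimes \sigma)$ of $(T^0, \lambda_T = \kappa_T \otimes \sigma)$ from Lemma \ref{plcovers}. Applying Theorem \ref{SZadapt} to each $m \in \{x, y\}$ yields
\[
[\R_{\kappa_m}(i_B^G(\chi))] = \bigl[i^{\M(\Lambda_E^m)}_{B(\Lambda_E^m)}(\R_{\kappa_T}(\chi))\bigr].
\]
By Section \ref{cuspU11U21}, when $\ell \neq 2$ and $\ell \mid q-1$, every principal series of the possible finite groups $\M(\Lambda_E^m)$ is semisimple of length at most two with no cuspidal constituent. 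Since $\R_{\kappa_m}$ is exact and every irreducible subquotient of $i_B^G(\chi)$ has nonzero $\kappa_m$-restriction (by Lemma \ref{lemma64}), the length of $i_B^G(\chi)$ is at most two.

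The Hecke algebra analysis is decisive. Under our hypothesis, the quadratic relations of Theorem \ref{Heckealgdesc} degenerate to $f_{w_x}^2 = f_{w_y}^2 = 1$, so $\mathcal{H}(G, \lambda) \simeq R[W]$ where $W = \langle w_x, w_y \mid w_x^2 = w_y^2 = 1\rangle$, and there are four distinct one-dimensional characters. The injection $j_B\colon \mathcal{H}(T, \lambda_T) \simeq R[X^{\pm 1}] \hookrightarrow \mathcal{H}(G, \lambda)$ sends $X$ to $\varepsilon f_{w_x} f_{w_y}$, and the Hecke module $M_\lambda(i_B^G(\chi)) = (j_B)_*(M_{\lambda_T}(\chi))$ is two-dimensional with basis $\{v, f_{w_x} v\}$. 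Using the identity $f_{w_y} f_{w_x} = \varepsilon X^{-1}$ (coming from $X = \varepsilon f_{w_x} f_{w_y}$ together with $f_{w_x}^2 = f_{w_y}^2 = 1$) one computes that, in this basis, $f_{w_x}$ swaps the basis vectors while $X$ acts diagonally with eigenvalues $\chi(X)$ and $\varepsilon^2 \chi(X)^{-1}$. If the two eigenvalues are distinct, then the only $X$-stable lines are the eigenlines, and neither is $f_{w_x}$-stable, so $M_\lambda(i_B^G(\chi))$ is simple. Otherwise $X$ is scalar, and (using $\ell \neq 2$) $M_\lambda(i_B^G(\chi))$ decomposes as the direct sum of the two distinct $\pm 1$ eigenspaces of $f_{w_x}$. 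Since $i_B^G(\chi)$ is reducible, quasi-projectivity (Theorem \ref{plcusps}) and Theorem \ref{qplemma} force the latter case: $M_\lambda(i_B^G(\chi))$ is a direct sum of two distinct characters of $\mathcal{H}(G, \lambda)$.

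By functoriality of $M_\lambda$ these two characters correspond to two non-isomorphic irreducible quotients $\pi_1, \pi_2$ of $i_B^G(\chi)$, yielding a surjection $i_B^G(\chi) \twoheadrightarrow \pi_1 \oplus \pi_2$; combined with the length bound this forces $i_B^G(\chi) \simeq \pi_1 \oplus \pi_2$. The main obstacle is the Hecke algebra module computation, which simultaneously exploits the degeneration of the quadratic relations at $\ell \mid q-1$ (giving the group algebra $R[W]$) and the hypothesis $\ell \neq 2$ (ensuring the $\pm 1$-eigenspace decomposition of $f_{w_x}$ is nontrivial); without either hypothesis the induced module can fail to split.
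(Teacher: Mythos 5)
Your proof is correct in outline but takes a genuinely different route from the paper at the decisive step, and one intermediate citation is off.

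\textbf{Where the approaches coincide.} Both you and the paper reduce to the reducible case and use $\kappa$-restriction to pin down the structure. The paper first rules out cuspidal subquotients directly: a cuspidal subquotient would be $\I_{\kappa_m}(\sigma)$ by Theorem~\ref{plcusps}, then $\R_{\kappa_m}(\I_{\kappa_m}(\sigma))\simeq\sigma$ forces a cuspidal subquotient in the finite principal series $i^{\M(\Lambda^m_E)}_{B(\Lambda^m_E)}(\R_{\kappa_T}(\chi))$ via Theorem~\ref{SZadapt}, contradicting Section~\ref{cuspU11U21}; then the geometric lemma gives length two. You state the conclusion (length at most two) but attribute the key input to Lemma~\ref{lemma64}. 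That lemma only treats irreducible \emph{subrepresentations or quotients}, not arbitrary irreducible subquotients, so it does not by itself justify the sentence ``every irreducible subquotient of $i_B^G(\chi)$ has nonzero $\kappa_m$-restriction.'' For a fixed $m$ a cuspidal subquotient supported on the other vertex has \emph{zero} $\kappa_m$-restriction by Lemma~\ref{lemma111}. You need to first rule out cuspidal subquotients (as the paper does) before the length bound follows.

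\textbf{Where the approaches diverge.} Once length two is established, the paper closes the argument by a counting argument: there are four distinct characters of $\mathcal{H}(G,\lambda)$ when $\ell\neq 2$ and $\ell\mid q-1$, only two reducibility points in the inertial orbit of $\chi$, and every character must occur as a proper quotient of a reducible twist, so each reducible $i^G_B(\chi\psi)$ carries two non-isomorphic irreducible quotients and is therefore semisimple. You instead compute the $2$-dimensional Hecke module $M_\lambda(i^G_B(\chi))\simeq(j_B)_*(M_{\lambda_T}(\chi))$ explicitly: $\mathcal{H}(G,\lambda)$ degenerates to the group algebra of the infinite dihedral group, the basis $\{v,vf_{w_x}\}$ is swapped by $f_{w_x}$ while $X=\varepsilon f_{w_x}f_{w_y}$ acts by $\chi(X)$ and $\varepsilon^2\chi(X)^{-1}$, and reducibility of $i^G_B(\chi)$ (via the existence of an irreducible subrepresentation with one-dimensional Jacquet module, left exactness of $\Hom_G(\I_\kappa(\sigma),-)$, and Theorem~\ref{jacquetcommutes}) forces the scalar case, in which the module splits into the two distinct $\pm1$-eigenspaces of $f_{w_x}$ because $\ell\neq 2$. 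Your calculation is a nice explicit alternative: it avoids having to know the number of reducibility points and in fact determines which two characters occur at each point, information the paper's counting argument suppresses. The paper's counting is shorter but more opaque. Your final justification (``quasi-projectivity and Theorem~\ref{qplemma} force the latter case'') is slightly miscited --- what one actually needs is left exactness of $M_\lambda$ plus the Jacquet-module computation --- but the conclusion is sound. In summary: correct idea, genuinely different final step, one wrong citation (Lemma~\ref{lemma64}) that should be replaced by the paper's argument against cuspidal subquotients.
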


\begin{proof}
If $i^G_B(\chi)$ is irreducible then it is semisimple, so suppose $i^G_B(\chi)$ is reducible.  If $i^G_B(\chi)$ has a cuspidal subquotient it is of the form $\I_{\kappa_m}(\sigma)$ for $m\in\{x,y\}$ and $\sigma$ an irreducible cuspidal representation of $\M(\Lambda_E^x)$ by Theorem \ref{plcusps}.  By Theorem \ref{SZadapt} $\R_{\kappa_x}(i^G_B(\chi))=i^{\M(\Lambda^x_E)}_{B(\Lambda^x_E)}(\R_{\kappa_T}(\chi))$ and $\R_{\kappa_x}(\I_{\kappa_x}(\sigma))=\sigma$, by Theorem \ref{pl=lz} and Lemma \ref{lemma111}.  Hence, by exactness, $\sigma$ is a cuspidal subquotient of $i^{\M(\Lambda^x_E)}_{B(\Lambda^x_E)}(\R_{\kappa_T}(\chi))$.  However, by Section \ref{cuspU11U21} when $\ell\mid q-1$ no such cuspidal subquotients exist hence $i^G_B(\chi)$ has no cuspidal subquotients.  Thus, by exactness of the Jacquet functor and the geometric lemma, $i^G_B(\chi)$ has length two.   When $\ell\neq 2$ and $\ell\mid q-1$ there are four characters of $\mathcal{H}(G,\lambda)$, yet only two reducibility points.  Hence these two reducible principle series representations must both have two non-isomorphic irreducible quotients and must be semisimple.
\end{proof}

\begin{remark}
If $\chi^{-1}=\chi$ then $i_B^G(\chi)$ is self contragredient and there is a simple proof of Lemma \ref{semisimple} using the contragredient representation and avoiding the use of covers or second adjunction.
\end{remark}

\begin{lemma}\label{lemma611}
Let $\ell\mid q+1$.  Then the unique irreducible quotient of $i^G_B(\chi)$ is isomorphic to the unique irreducible subrepresentation.
\end{lemma}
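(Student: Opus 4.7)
The plan exploits the observation that at every reducibility point with $\ell \mid q+1$, the character $\chi$ satisfies $w_0.\chi = \chi$ as an $\ell$-modular character of $T$, where $w_0$ is the non-trivial Weyl element; by the geometric lemma, both Jordan--H\"older factors of $r^G_{B^{op}}(i^G_B(\chi))$ are then isomorphic to $\chi$, and any non-zero sub-$T$-module thereof admits $\chi$ as a subrepresentation.

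A case-by-case verification of Theorem \ref{redpoints} confirms the claim $w_0.\chi \equiv \chi \pmod{\ell}$. For $\chi = \delta_B^{\pm 1/2}$, one has $w_0.\chi = \delta_B^{\mp 1/2}$, and since $\delta_B^{\pm 1}(\diag(x,y,\overline{x}^{-1})) = |x|_F^{\pm 4} \equiv 1 \pmod{\ell}$ (as $q^2 \equiv 1 \pmod{\ell}$ when $\ell \mid q+1$), the two characters agree mod $\ell$. For $\chi = \eta\delta_B^{\pm 1/4}$, the same collapse of $\delta_B^{\pm 1/2}$ combined with $\eta(x\overline{x}) = \omega_{F/F_0}(N_{F/F_0}(x)) = 1$ on $F^\times$ yields $w_0.\chi = \chi$. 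For the third case, $\chi_1|_{F_0^\times}$ trivial forces $\chi_1^{q+1} = 1$, hence $\chi_1(\overline{x}^{-1}) = \chi_1(x^q)^{-1} = \chi_1(x)^{-q} = \chi_1(x)$.

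Let $\pi_{\mathrm{soc}}$ and $\pi_{\mathrm{top}}$ denote the unique irreducible subrepresentation and quotient of $i^G_B(\chi)$ respectively. By Frobenius reciprocity, the embedding $\pi_{\mathrm{soc}} \hookrightarrow i^G_B(\chi)$ gives $r^G_B(\pi_{\mathrm{soc}}) \neq 0$, hence also $r^G_{B^{op}}(\pi_{\mathrm{soc}}) \neq 0$ since $B$ and $B^{op}$ are $G$-conjugate. Exactness of the Jacquet functor then realises $r^G_{B^{op}}(\pi_{\mathrm{soc}})$ as a non-zero sub-$T$-module of $r^G_{B^{op}}(i^G_B(\chi))$, so $\chi$ embeds into $r^G_{B^{op}}(\pi_{\mathrm{soc}})$ by the first paragraph. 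Second adjunction yields $\Hom_G(i^G_B(\chi), \pi_{\mathrm{soc}}) \simeq \Hom_T(\chi, r^G_{B^{op}}(\pi_{\mathrm{soc}})) \neq 0$, exhibiting $\pi_{\mathrm{soc}}$ as a quotient of $i^G_B(\chi)$; by uniqueness of the irreducible quotient, $\pi_{\mathrm{soc}} \simeq \pi_{\mathrm{top}}$.

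The main technical obstacle is the case-by-case verification of $w_0.\chi \equiv \chi \pmod{\ell}$ across the three families of reducibility characters from Theorem \ref{redpoints}, which reduces in each case to $q^2 \equiv 1 \pmod{\ell}$ together with elementary properties of $\omega_{F/F_0}$; once this is established, the remainder of the argument is a direct application of the geometric lemma, exactness of the Jacquet functor, and second adjunction.
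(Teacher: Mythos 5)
Your approach is genuinely different from the paper's, and is essentially correct. The paper proves this by observing that when $\ell\mid q+1$ the Hecke algebra $\mathcal{H}(G,\lambda)$ has a unique character, then deduces the statement via the contragredient: since $\delta_B$ is trivial, $\left(i_B^G(\chi)\right)^{\sim}\simeq i_B^G(\chi^{-1})$, one argues that the unique irreducible quotient $\rho$ of $i_B^G(\chi^{-1})$ satisfies $\widetilde\rho\simeq\pi$ by comparing $\R_{\kappa}(\widetilde\rho)$ and $\R_{\kappa}(\pi)$ (both equal to $\sigma$), and $\widetilde\rho$ is the unique irreducible subrepresentation of $i^G_B(\chi)$. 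Your route instead establishes the Weyl-invariance $w_0\cdot\chi=\chi$ at every $\ell$-modular reducibility point, so that both Jordan--H\"older factors of $r^G_{B^{op}}(i^G_B(\chi))$ are isomorphic to $\chi$, and then second adjunction gives a non-zero map $i^G_B(\chi)\to\pi_{\mathrm{soc}}$, forcing $\pi_{\mathrm{soc}}\simeq\pi_{\mathrm{top}}$. Your argument is more elementary and direct --- it avoids both the Hecke-module bookkeeping and the contragredient/$\kappa$-restriction compatibility (Lemma \ref{contracommutes}) --- but it leans on the classification of reducibility points (Theorem \ref{redpoints}) to verify the invariance case-by-case; the paper's argument is more structural and does not require that inventory.

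One genuine error in your case-by-case verification: in case (3), the assertion that $\chi_1|_{F_0^\times}$ trivial forces $\chi_1^{q+1}=1$ is false --- $F^\times/F_0^\times$ is not $(q+1)$-torsion (its image in $1+\mathfrak{p}_F$ modulo $1+\mathfrak{p}_0$ is an infinite pro-$p$ group) --- and the intermediate identity $\chi_1(\overline{x}^{-1})=\chi_1(x^q)^{-1}$ is likewise invalid, since $\overline{x}=x^q$ fails for $x\in F^\times$ (it only holds in $k_F^\times$). The conclusion you need is nonetheless correct and comes immediately: since $x\overline{x}=N_{F/F_0}(x)\in F_0^\times$ and $\chi_1|_{F_0^\times}$ is trivial, $\chi_1(x\overline{x})=1$, hence $\chi_1(\overline{x}^{-1})=\chi_1(x)$. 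The same observation also underpins case (2), where you need $\eta(x\overline x)=\omega_{F/F_0}(N_{F/F_0}(x))=1$. With that repair, the argument goes through.
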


\begin{proof}
Let $\pi$ denote the unique irreducible quotient of $i^G_B(\chi)$. When $\ell\mid q+1$ there is only one character of  $\mathcal{H}(G,\kappa\otimes\sigma)$. Hence $\pi$ corresponds to the unique character of $\mathcal{H}(G,\lambda)$.  Hence, if $\mathcal{V}$ is the space of $\pi$, $\R_{\kappa}(\mathcal{V})$ is one dimensional and the action of $J$ is given by $\sigma$. As $\delta_B$ is trivial, the contragredient commutes with parabolic induction; we have $\left(i_B^G(\chi)\right)^{\sim} \simeq i_B^G\left(\widetilde{\chi}\right)$. Furthermore, $\widetilde{\chi}=\chi^{-1}$ where $\chi^{-1}$ is the character defined by, for all $x\in F^\times$,   $\chi^{-1}(x)=\chi(x^{-1})$. The character $\chi^{-1}$ is not regular and similar arguments, given for $i_B^G(\chi)$, apply to $i_B^G(\chi^{-1})$.  We find that $i_B^G(\chi^{-1})$ has a unique irreducible quotient $\rho$ which corresponds to the unique character of $\mathcal{H}(G,\widetilde{\lambda})$ under the bijection of Theorem \ref{qplemma}. 
As the contragredient is contravariant and exact, $\widetilde{\rho}$ is a subrepresentation of $i_B^G(\chi)$.  By Lemma \ref{contracommutes}, we have  $\left(\R_{\widetilde{\kappa}}(\rho)\right)^{\sim}\simeq \R_{\kappa}(\widetilde{\rho})$ which is one dimensional and hence must be isomorphic to $\sigma$.  Hence $\widetilde{\rho}$ is irreducible and isomorphic to $\pi$.  Thus $\pi$ appears twice in the composition series of $i_B^G(\chi)$ as the unique irreducible quotient and as the unique irreducible subrepresentation. 
\end{proof}

\begin{remark}
If $\ell\neq 3$ and $\ell\mid q^2-q+1$ then similar counting arguments show that the unique irreducible subrepresentation is not isomorphic to the unique irreducible quotient.  However, in these cases we find out more information later so this argument is not necessary.
\end{remark}

\subsection{On the unramified principal series}
\subsubsection{Decomposition of $i_{B}^G(\delta_B^{-\frac{1}{2}})$ and $i^G_B(\delta_B^{\frac{1}{2}})$}
In all cases of coefficient field, the space of constant functions form an irreducible subrepresentation of $i_{B}^G(\delta_B^{-\frac{1}{2}})$ isomorphic to $1_G$.  We let $\St_G$ denote the quotient of $i_{B}^G(\delta_B^{-\frac{1}{2}})$ by $1_G$. Parabolic induction preserves finite length representations, hence $\St_G$ has an irreducible quotient $\nu_G$.   By the geometric lemma, $\left[r^G_B\circ i_{B}^G(\delta_B^{-\frac{1}{2}})\right]\simeq \delta_B^{-\frac{1}{2}}\oplus(\delta_B^{-\frac{1}{2}})^{w_x}$.  Considering $\delta_B$ as a character of $E^\times$, we have $(\delta_B^{-\frac{1}{2}})^{w_x}(x)=\delta_B^{-\frac{1}{2}}(\overline{x}^{-1})=\delta_B^{\frac{1}{2}}(x),$
as $\delta_B(x)=\delta_B(\overline{x})$.  Thus $\left[r^G_B\circ i_{B}^G(\delta_B^{-\frac{1}{2}})\right]=\delta_B^{-\frac{1}{2}}\oplus\delta_B^{\frac{1}{2}}$.  We have $r^G_B(1_G)=\delta_B^{-\frac{1}{2}}$, thus $r^G_B(\St_G)=\delta_B^{\frac{1}{2}}$ by exactness of the Jacquet functor.  A quotient of a parabolically induced representation has nonzero Jacquet module, hence $r^G_B(\nu_G)=\delta_B^{\frac{1}{2}}$.    Thus any other composition factors which occur in $i_{B}^G(\delta_B^{-\frac{1}{2}})$ must be cuspidal.  

\begin{thm}\label{urprincipaldecomp}~
\begin{enumerate}
\item If $\ell\nmid (q-1)(q+1)(q^2-q+1)$ then $i^G_B(\delta_B^{-\frac{1}{2}})$ has length two with unique irreducible subrepresentation $1_G$ and unique irreducible quotient $\St_G$.
\item If $\ell\neq 2$ and $\ell\mid q-1$ then $i^G_B(\delta_B^{-\frac{1}{2}})=1_G\oplus \St_G$ is semisimple of length two.
\item If $\ell\neq 3$ and $\ell\mid q^2-q+1$ then $i^G_B(\delta_B^{-\frac{1}{2}})$ has length three with unique cuspidal subquotient $\I_{\Lambda_x}(\overline{\tau}^+(\overline{1}))$.  The unique irreducible quotient $\nu_G$ is not a character.
\item If $\ell\neq 2$ and $\ell\mid q+1$ or if $\ell=2$ and $4\mid q+1$,  then $i^G_B(\delta_B^{-\frac{1}{2}})$ has length six with $1_G$ appearing as the unique subrepresentation and the unique quotient and four cuspidal subquotients.  Let $\pi$ be a maximal proper submodule of $\St_G$.  Then $\pi\simeq \rho\oplus \I_{\Lambda_y}(\overline{\sigma}(\overline{1})\otimes\overline{1})$ where $\rho$ is of length three with unique 
irreducible subrepresentation and unique irreducible quotient, both of which are isomorphic to $\I_{\Lambda_x}(\overline{\nu}(\overline{1}))$, and remaining subquotient isomorphic to $\I_{\Lambda_x}(\overline{\sigma}(\overline{1}))$.
\item  If $\ell=2$ and $4\mid q-1$, then $i^G_B(\delta_B^{-\frac{1}{2}})$ has length five with unique irreducible subrepresentation and unique irreducible quotient both isomorphic to $1_G$. Let $\pi$ be a maximal proper submodule of $\St_G$.  Then $\pi\simeq \I_{\Lambda_x}(\overline{\nu}(\overline{1}))\oplus\I_{\Lambda_x}(\overline{\tau}^{+}(\overline{\chi}))\oplus\I_{\Lambda_y}(\overline{\sigma}(\overline{1})\otimes\overline{1})$.
\end{enumerate}
\end{thm}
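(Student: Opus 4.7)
The plan is to reduce the theorem to the finite-group principal series decompositions of Section~\ref{cuspU11U21}. Since $\delta_B^{\pm 1/2}$ is level zero, Lemma~\ref{theorem2} gives
\[
\R_{\Lambda_w}\bigl(i^G_B(\delta_B^{-1/2})\bigr) \simeq i^{\M(\Lambda_w)}_{B_w}(\overline{1}), \qquad w \in \{x,y\},
\]
so the composition factors upstairs are constrained by those downstairs. Combined with Theorem~\ref{plcusps}, Lemma~\ref{lemma65}, and the identity $\R_{\Lambda_{w'}} \circ \I_{\Lambda_w}(\tau) = \tau$ if the parahorics are conjugate and $0$ otherwise (Lemma~\ref{lemma111}), this gives a multiplicity-preserving bijection between the irreducible cuspidal subquotients of $i^G_B(\delta_B^{-1/2})$ and the disjoint union over $w \in \{x,y\}$ of the irreducible cuspidal subquotients of $i^{\M(\Lambda_w)}_{B_w}(\overline{1})$. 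The non-cuspidal factors can only be $1_G$ or $\St_G$, detected by $r^G_B(1_G) = \delta_B^{-1/2}$ and $r^G_B(\St_G) = \delta_B^{1/2}$ recorded just before the theorem.

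With this counting in place, cases (1) and (2) follow at once: in (1) the finite principal series have no cuspidal constituents, and in (2) Lemma~\ref{semisimple} applies directly. For case (3), the hypotheses $\ell \neq 3$ and $\ell \mid q^2-q+1$ force $\ell \nmid (q-1)(q+1)$ (via $\gcd(q^2-q+1, q^2-1) \mid 3$), so $i^{\M(\Lambda_y)}_{B_y}(\overline{1})$ contributes no cuspidal, while $i^{\M(\Lambda_x)}_{B_x}(\overline{1})$ is the length-three uniserial of Section~\ref{cuspU11U21} with unique cuspidal subquotient $\overline{\tau}^+(\overline{1})$, yielding length three with the stated cuspidal. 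That $\nu_G$ is not a character is read off from the fact that $\R_{\Lambda_x}(\nu_G)$ must be the non-linear quotient of $i^{\M(\Lambda_x)}_{B_x}(\overline{1})/1_{\M(\Lambda_x)}$.

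Cases (4) and (5) follow the same bookkeeping. From the composition factors $(1, \overline{\nu}(\overline{1}), \overline{\sigma}(\overline{1}), \overline{\nu}(\overline{1}), 1)$ of $i^{\M(\Lambda_x)}_{B_x}(\overline{1})$ and $(1_{\M(\Lambda_y)}, \overline{\sigma}(\overline{1})\otimes\overline{1}, 1_{\M(\Lambda_y)})$ of $i^{\M(\Lambda_y)}_{B_y}(\overline{1})$ (together with the analogous data when $\ell = 2$, $4 \mid q-1$), I read off the total length and the multiplicity of each cuspidal constituent. Lemma~\ref{lemma611} identifies $1_G$ as both the unique subrepresentation and the unique quotient when $\ell \mid q + 1$, so in case (4) the maximal proper submodule $\pi$ of $\St_G$ has length four. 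To split off $\I_{\Lambda_y}(\overline{\sigma}(\overline{1})\otimes\overline{1})$ from $\pi$, I would use that the remaining $\I_{\Lambda_x}$ part $\rho$ satisfies $\R_{\Lambda_y}(\rho) = 0$ by Lemma~\ref{lemma111}, while $\R_{\Lambda_y}$ of the putative extension picks out $\overline{\sigma}(\overline{1})\otimes\overline{1}$ as a direct summand of $\R_{\Lambda_y}(\pi)$, whence Frobenius reciprocity produces the splitting upstairs. The uniserial structure of $\rho$ (socle and head equal to $\I_{\Lambda_x}(\overline{\nu}(\overline{1}))$, middle $\I_{\Lambda_x}(\overline{\sigma}(\overline{1}))$) is then lifted from the corresponding uniserial subquotient of $i^{\M(\Lambda_x)}_{B_x}(\overline{1})$ by adjunction and exactness of $\R_{\Lambda_x}$. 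Case (5) is handled identically, except that the cuspidal middle of the finite-group principal series is already semisimple, so $\pi$ becomes a direct sum of three irreducibles.

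The main obstacle will be the non-splitting of $\rho$ in case (4): establishing that $\rho$ is genuinely uniserial of length three, with socle and head both isomorphic to $\I_{\Lambda_x}(\overline{\nu}(\overline{1}))$, rather than a semisimple sum of the same composition factors. The approach I would take is to assume a splitting and apply $\R_{\Lambda_x}$: this would produce a semisimple $\R_{\Lambda_x}(\rho)$, contradicting the uniserial structure of $i^{\M(\Lambda_x)}_{B_x}(\overline{1})/1_{\M(\Lambda_x)}$ after one peels off the finite-group socle and head, forced on the upstairs side by adjunction of $(\I_{\Lambda_x}, \R_{\Lambda_x})$ and the identification of all non-cuspidal factors with $1_G$ and $\St_G$.
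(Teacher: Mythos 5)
Your proposal follows essentially the same route as the paper: reduce to the finite-group principal series via Lemma~\ref{theorem2} and exactness of $\R_{\Lambda_w}$, use Theorem~\ref{plcusps} and Lemma~\ref{lemmacus} to identify the cuspidal constituents as parahoric inductions, use Lemma~\ref{lemma65} to place the cuspidal part as a submodule of $\St_G$, and argue uniserial versus semisimple structure via $\R_{\Lambda_x}$ together with Lemma~\ref{lemma111}. Your argument for the non-splitting of $\rho$ in case~(4) via $\R_{\Lambda_x}(\rho)\simeq\Sigma_x$ is actually more explicit than the paper's terse ``by exactness and Section~\ref{cuspU11U21}'', and your use of Lemma~\ref{semisimple} for case~(2) is a harmless variant of the paper's second-adjunction computation.

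There is, however, a slip in your treatment of case~(3). You claim that $\nu_G$ is shown not to be a character because ``$\R_{\Lambda_x}(\nu_G)$ must be the non-linear quotient of $i^{\M(\Lambda_x)}_{B_x}(\overline{1})/1_{\M(\Lambda_x)}$''. But in this case $i^{\M(\Lambda_x)}_{B_x}(\overline{1})$ is uniserial of length three with composition factors $1_{\M(\Lambda_x)},\,\overline{\tau}^+(\overline{1}),\,1_{\M(\Lambda_x)}$, so after applying the exact functor $\R_{\Lambda_x}$ to the sequence $0\to \I_{\Lambda_x}(\Sigma_x)\to\St_G\to\nu_G\to 0$ (with $\Sigma_x=\overline{\tau}^+(\overline{1})$) you get $\R_{\Lambda_x}(\nu_G)\simeq 1_{\M(\Lambda_x)}$ --- a character, which tells you nothing. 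The paper instead applies $\R_{\Lambda_y}$: here $i^{\M(\Lambda_y)}_{B_y}(\overline{1})$ has factors $1_{\M(\Lambda_y)}$ and $\St_{M_y}$ and no cuspidal pieces, $\Sigma_y=0$, so $\R_{\Lambda_y}(\nu_G)\simeq \St_{M_y}$, which is not one-dimensional, and hence $\nu_G$ is not a character and not isomorphic to $1_G$. You should switch to $\R_{\Lambda_y}$ at this step; everything else stands.
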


\begin{proof}
By Theorem \ref{plcusps} and Lemma \ref{lemmacus}, if $i^G_B(\delta_B^{-\frac{1}{2}})$ has a cuspidal subquotient $\pi$ then $\pi\simeq \I_{\Lambda_w}(\sigma)$ for $w\in\{x,y\}$ and $\sigma$ an irreducible cuspidal representation of $\P(\Lambda_w)/\P_1(\Lambda_w)$.

If $\Sigma_w$ is a maximal cuspidal subquotient of $\R_{\Lambda_w}(i^G_B(\delta_B^{-\frac{1}{2}}))$ then $\I_{\Lambda_w}(\Sigma_w)$ is a subrepresentation of $\St_G$ by Lemma \ref{lemma65}.  Thus, we have an exact sequence
\[0\rightarrow \I_{\Lambda_x}(\Sigma_x)\oplus\I_{\Lambda_y}(\Sigma_y)\rightarrow \St_G\rightarrow \nu_G\rightarrow 0.\]
By exactness and Section \ref{cuspU11U21}, we obtain composition series of $\I_{\Lambda_x}(\Sigma_x)$ and of $\I_{\Lambda_y}(\Sigma_y)$.

If $\ell\nmid (q-1)(q+1)(q^2-q+1)$ or $\ell\neq 2$ and $\ell\mid q-1$ then $\R_{\Lambda_x}(i^G_B(\delta_B^{-\frac{1}{2}}))$ and $\R_{\Lambda_y}(i^G_B(\delta_B^{-\frac{1}{2}}))$ are of length two with no cuspidal subquotients, by Theorem \ref{plcusps} and Lemma \ref{lemmacus}.  Hence $i^G_B(\delta_B^{-\frac{1}{2}})$ has no cuspidal subquotients as $\R_{\Lambda_w}(\I_{\Lambda_w}(\sigma))\simeq \sigma$ is cuspidal by Lemma \ref{lemma111}.  By the geometric lemma, $i^G_B(\delta_B^{-\frac{1}{2}})$ is of length two with $1_G$ as an irreducible subrepresentation and $\St_G$ as an irreducible quotient.  By second adjunction,
\[\Hom_G(i^G_B(\delta_B^{-\frac{1}{2}}),1_G)\simeq \Hom_T(\delta_B^{-\frac{1}{2}},1_T).\]
 The character $\delta_B^{-\frac{1}{2}}$ is nontrivial when $\ell\nmid (q-1)(q+1)(q^2-q+1)$ and trivial when $\ell\mid q-1$.  Hence $1_G$ is a direct factor when $\ell\neq 2$ and $\ell\mid q-1$ and $i^G_B(\delta_B^{-\frac{1}{2}})$ is semisimple, and $i^G_B(\delta_B^{-\frac{1}{2}})$ is non-split when $\ell\nmid (q-1)(q+1)(q^2-q+1)$.
 
In all other cases, $i^G_B(\delta_B^{-\frac{1}{2}})$ has cuspidal subquotients.  Thus $1_G$ cannot be a direct factor.  Therefore $i^G_B(\delta_B^{-\frac{1}{2}})$  has a unique irreducible quotient $\nu_G$ and a unique irreducible subrepresentation $1_G$.  When $\ell\mid q+1$ the unique irreducible quotient is isomorphic to the unique irreducible subrepresentation by Lemma \ref{lemma611}, hence $\nu_G\simeq 1_G$.  When $\ell\neq 3$ and $\ell\mid q^2-q+1$ then $\R_{\Lambda_y}(i^G_B(\delta_B^{-\frac{1}{2}})$ has non-cuspidal subquotients $1_{M_y}$ and $\St_{M_y}$.  By exactness, $\R_{\Lambda_y}(\nu_G)\simeq \St_{M_y}$ hence $1_G$ is not isomorphic to $\nu_G$ which is not a character.
\end{proof}

Note that $i^G_B(\delta_B^{\frac{1}{2}})\simeq i^G_B(\delta_B^{-\frac{1}{2}})^{\sim}$, hence decompositions of $i^G_B(\delta_B^{\frac{1}{2}})$ can be obtained from Theorem \ref{urprincipaldecomp}.

\subsubsection{Decomposition of unramified $i^G_B(\eta\delta_B^{-\frac{1}{4}})$ and $i^G_B(\eta\delta_B^{\frac{1}{4}})$}
Let $\eta$ be the unique unramified character of $F^\times$ extending $\omega_{F/F_0}$.  When $\ell\mid q+1$ we have $\omega_{F/F_0}\delta_B^{\frac{1}{4}}=\omega_{F/F_0}\delta_B^{-\frac{1}{4}}=\delta_B=1$ hence we refer to Theorem \ref{urprincipaldecomp}.  When $\ell\mid q^2+q+1$ we have $\delta_B^{-\frac{1}{2}}=\eta\delta_B^{\frac{1}{4}}$ and $\delta_B^{\frac{1}{2}}=\eta\delta_B^{-\frac{1}{4}}$, hence once more we refer to Theorem \ref{urprincipaldecomp}.   When $\ell\mid q-1$, $\delta_B$ is trivial; hence $\eta\delta_B^{-\frac{1}{4}}=\eta\delta_B^{\frac{1}{4}}=\eta$.  Thus $i^G_B(\eta)$ is self-contragredient. By Lemma \ref{semisimple}, $i^G_B(\eta)$ has length two and is semisimple.

\subsection{Cuspidal subquotients of the ramified level zero principal series}
We describe the reducible principal series $i^G_B(\chi)$ which have length greater than two when $\chi$ is a level zero character of $T$ which does not factor through the determinant map.  We twist by a character that factors through the determinant map so that we can assume $\chi_2=1$. Then $\chi^{q+1}=1$ and $\chi=\overline{\psi}\circ\xi$ for $\overline{\psi}$ a non-trivial character of $k_F^1$. 

When $\ell\nmid q+1$, because $\R_{\Lambda_x}(i^G_B(\chi))$ and $\R_{\Lambda_y}(i^G_B(\chi))$ have no cuspidal subquotients, 
$i^G_B(\chi)$ is of length two.  
\begin{thm}\label{RamifiedPrincseries}
Let $\ell\mid q+1$. The representation  $i_B^G(\chi)$ has length four with a unique irreducible subrepresentation and a unique irreducible quotient, and cuspidal subquotient isomorphic to $\I_{\Lambda_x}(\overline{\sigma}(\overline{\psi},\overline{\psi},\overline{1}))\oplus\I_{\Lambda_y}(\overline{\sigma}(\overline{\psi})\otimes \overline{1})$. Furthermore, the unique irreducible subrepresentation is isomorphic to the unique irreducible quotient. \end{thm}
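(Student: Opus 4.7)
The plan is to combine parahoric restriction, which reduces the problem to the finite-group computations of Section \ref{cuspU11U21}, with the lifting principle of Lemma \ref{lemma65} and the self-duality statement of Lemma \ref{lemma611}, and then count composition factors.

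\emph{First}, since $\chi$ has level zero I would take the $\beta$-extension $\kappa$ trivial and apply Lemma \ref{theorem2} to get isomorphisms $\R_{\Lambda_w}(i^G_B(\chi)) \simeq i^{\M(\Lambda_w)}_{B_w}(\chi\mid_{T^0})$ for $w \in \{x,y\}$. Under the identifications $\M(\Lambda_x) \simeq \U(2,1)(k_F/k_0)$ and $\M(\Lambda_y) \simeq \U(1,1)(k_F/k_0) \times k_F^1$, the restriction $\chi\mid_{T^0}$ corresponds to the finite-group character with $\overline{\chi}'_1 = \overline{\psi}$ non-trivial and $\overline{\chi}_2 = \overline{1}$, respectively to the character $\overline{\psi}\circ\xi$ on the $\U(1,1)(k_F/k_0)$ factor tensored with the trivial character on $k_F^1$. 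By Section \ref{cuspU11U21}, since $\ell\mid q+1$, both parahoric restrictions are uniserial of length three: $\R_{\Lambda_x}(i^G_B(\chi))$ has middle subquotient the cuspidal $\overline{\sigma}(\overline{\psi},\overline{\psi},\overline{1})$ and unique sub/quotient $\overline{R}_{\overline{1}_H(\overline{\psi})}$, while $\R_{\Lambda_y}(i^G_B(\chi))$ has middle subquotient $\overline{\sigma}(\overline{\psi})\otimes\overline{1}$ and unique sub/quotient $(\overline{\psi}\circ\det)\otimes\overline{1}$.

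\emph{Second}, the character $\chi$ falls into case (3) of Theorem \ref{redpoints}, so $i^G_B(\chi)$ is reducible. The identity $\chi_1(\overline{x})=\chi_1(x)^{-1}$, which follows from $\chi_1 = \overline{\psi}\circ\xi$, gives $\chi^{w_x}=\chi$, so by the geometric lemma $[r^G_B(i^G_B(\chi))] = 2\chi$. Uniqueness of the irreducible subrepresentation and irreducible quotient of $i^G_B(\chi)$ follows since cuspidal subrepresentations and quotients are excluded by Frobenius reciprocity and second adjunction, while non-cuspidal ones correspond to $1$-dimensional characters of $\mathcal{H}(G,\lambda)$ via Theorem \ref{qplemma} and Theorem \ref{jacquetcommutes}, of which there is only one when $\ell\mid q+1$ by Theorem \ref{Heckealgdesc} and the discussion in Section \ref{redpoints}. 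Write $\pi_1$ for this unique constituent. Applying Lemma \ref{lemma65} to the cuspidal subquotients in the first step, each irreducible by Theorem \ref{plcusps}, yields irreducible cuspidal subrepresentations $\I_{\Lambda_x}(\overline{\sigma}(\overline{\psi},\overline{\psi},\overline{1}))$ and $\I_{\Lambda_y}(\overline{\sigma}(\overline{\psi})\otimes\overline{1})$ of $\pi_2 := i^G_B(\chi)/\pi_1$. These two are non-isomorphic since by Lemma \ref{lemma111} they have disjoint parahoric restrictions, so their sum inside $\pi_2$ is direct.

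\emph{Third}, the equality $[r^G_B(i^G_B(\chi))] = 2\chi$ together with exactness of the Jacquet functor bounds the number of non-cuspidal composition factors by two; and Lemma \ref{lemma111} forces every cuspidal factor to contribute a cuspidal factor to $[\R_{\Lambda_m}(i^G_B(\chi))]$ for some $m\in\{x,y\}$, so by Step 1 at most two cuspidal factors appear. Hence $i^G_B(\chi)$ has length exactly four. Finally Lemma \ref{lemma611} identifies the unique irreducible quotient with $\pi_1$, completing the description.

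The main obstacle is the bookkeeping in Step 1: matching $\chi\mid_{T^0}$ precisely to the finite-group characters on the two non-conjugate maximal parahoric quotients, and confirming in Step 2 that Lemma \ref{lemma65} yields the cuspidal representations written in the statement rather than twists of them. Beyond this the argument is a direct assembly of the tools developed in Sections \ref{sect4}--\ref{sect64}.
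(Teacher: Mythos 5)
Your proof is correct and follows the same strategy the paper indicates when it says the argument is similar to that of Theorem \ref{urprincipaldecomp}: compute the parahoric restrictions via Lemma \ref{theorem2}, read off the finite-group structure from Section \ref{cuspU11U21}, use Lemma \ref{lemma65} to place the cuspidal pieces as subrepresentations of the quotient by the unique irreducible subrepresentation, bound the length by the geometric lemma together with Lemma \ref{lemma111}, and invoke Lemma \ref{lemma611} for the final identification. The bookkeeping in your Step 1 does work out as stated, and your Hecke-algebra argument for uniqueness of the irreducible subrepresentation/quotient matches the paper's discussion in Section \ref{sect64}.
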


\begin{proof}
The proof is similar to the proof of Theorem \ref{urprincipaldecomp}.
\end{proof}

\section{Cuspidal subquotients of positive level principal series}\label{penultimate}
In this section, suppose that $\chi_1$ is a positive level character of $F^\times$ trivial on $F_0^\times$ and $\chi$ is the character of $T$ given by $\chi_1$ and $\chi_2=1$.  We assume we are in the same setting as Section \ref{sect64} with $(T^0,\lambda_T)$ an $R$-type contained in $\chi$, $(J,\lambda)$ a $G$-cover of $(T^0,\lambda_T)$ relative to $B$ with $\lambda=\kappa\otimes\sigma$, and $(\kappa_m,\Lambda^m)$ compatible with $(\kappa,\Lambda)$ for $m\in\{x,y\}$. We have $\M(\Lambda^m_E)\simeq\U(1,1)(k_F/k_0)\times \U(1)(k_F/k_0)$.  When $\ell\nmid q+1$, there are no cuspidal subquotients of $\U(1,1)(k_F/k_0)$ and hence no cuspidal subquotients of $i^G_B(\chi)$ by Lemma \ref{lemmacus}.  Thus it remains to look at the case when $\ell\mid q+1$.   Let $\overline{\psi}=(\chi\kappa_T^{-1})^{T^1}$ and $\overline{\chi}$ the character of $k_F^1$ such that $\overline{\psi}=\overline{\chi}\circ \xi$. 

\begin{thm}\label{poslevramdecomp}
Suppose $\ell\mid q+1$.    The representation $i^G_B(\chi)$ has length four with unique irreducible subrepresentation and unique irreducible quotient which are isomorphic, and cuspidal subquotient isomorphic to $\I_{\kappa_x}(\sigma(\overline{\chi})\otimes \overline{1})\oplus \I_{\kappa_y}(\sigma(\overline{\chi})\otimes \overline{1}).$ 
\end{thm}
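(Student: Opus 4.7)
The plan is to mirror the proofs of Theorems \ref{urprincipaldecomp} and \ref{RamifiedPrincseries}. First, apply Theorem \ref{SZadapt} to compute, for each $m\in\{x,y\}$,
\[
\left[\R_{\kappa_m}(i^G_B(\chi))\right]\simeq\left[i^{\M(\Lambda_E^m)}_{B(\Lambda_E^m)}(\R_{\kappa_T}(\chi))\right]
\]
in the Grothendieck group of $\M(\Lambda_E^m)\simeq\U(1,1)(k_F/k_0)\times\U(1)(k_F/k_0)$. Since $\chi_2=1$, the character $\R_{\kappa_T}(\chi)$ on the diagonal torus has the form $\overline{\psi}\otimes\overline{1}=(\overline{\chi}\circ\xi)\otimes\overline{1}$, and combined with the length-three description of $i^H_{B_H}(\overline{\chi}\circ\xi)$ from Section \ref{cuspU11U21} this yields
\[
\left[\R_{\kappa_m}(i^G_B(\chi))\right]=2[(\overline{\chi}\circ\det)\otimes\overline{1}]+[\overline{\sigma}(\overline{\chi})\otimes\overline{1}].
\]

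Second, identify the cuspidal composition factors. By Lemma \ref{lemmacus} and Theorem \ref{plcusps}, every irreducible cuspidal subquotient of $i^G_B(\chi)$ has the form $\I_{\kappa_{m'}}(\tau)$ with $m'\in\{x,y\}$ and $\tau$ an irreducible cuspidal representation of $\M(\Lambda_E^{m'})$. Combining Theorem \ref{pl=lz} with Lemma \ref{lemma111}, and using that $\P(\Lambda_E^x)$ and $\P(\Lambda_E^y)$ are non-conjugate maximal parahoric subgroups of $\GE$, one has $\R_{\kappa_m}\I_{\kappa_{m'}}(\tau)\simeq\tau$ when $m=m'$ and $\R_{\kappa_m}\I_{\kappa_{m'}}(\tau)=0$ otherwise. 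Matching with the semisimplification above forces the cuspidal composition factors of $i^G_B(\chi)$ to be exactly $\I_{\kappa_x}(\overline{\sigma}(\overline{\chi})\otimes\overline{1})$ and $\I_{\kappa_y}(\overline{\sigma}(\overline{\chi})\otimes\overline{1})$, each with multiplicity one.

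Third, pin down the length and the socle/cosocle. Since $\chi_1$ is trivial on $F_0^\times$ the character $\chi$ is fixed by the non-trivial Weyl element, so the geometric lemma gives $[r^G_B(i^G_B(\chi))]=2[\chi]$. By Lemma \ref{lemma611} the unique irreducible quotient $\pi_1$ is isomorphic to the unique irreducible subrepresentation, and since the Jacquet module of an irreducible quotient of $i^G_B(\chi)$ is one-dimensional, $\pi_1$ appears with multiplicity exactly two. Together with the two cuspidal constituents this yields total length four.

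Finally, establish the direct-sum decomposition of the maximal cuspidal subquotient. Let $M$ be the kernel of the surjection $i^G_B(\chi)\twoheadrightarrow\pi_1$. Adapting the argument in the proof of Lemma \ref{lemma65}, using that $\R_{\kappa_m}(\pi_1)$ can contain neither a cuspidal subrepresentation nor a cuspidal quotient (otherwise adjunction together with Lemma \ref{contracommutes} would produce a cuspidal piece inside the irreducible non-cuspidal $\pi_1$), gives $\R_{\kappa_m}(\pi_1)\simeq(\overline{\chi}\circ\det)\otimes\overline{1}$. Exactness of $\R_{\kappa_m}$ then yields $\R_{\kappa_m}(M/\pi_1)\simeq\overline{\sigma}(\overline{\chi})\otimes\overline{1}$, which is irreducible, and adjunction supplies a nonzero, hence injective, embedding $\I_{\kappa_m}(\overline{\sigma}(\overline{\chi})\otimes\overline{1})\hookrightarrow M/\pi_1$ for each $m\in\{x,y\}$. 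Since the two resulting subrepresentations are irreducible and pairwise non-isomorphic, their sum is direct; as it has length two, it exhausts $M/\pi_1$. The main obstacle will be confirming that $\R_{\kappa_m}(M/\pi_1)$ is genuinely irreducible, and not merely irreducible after semisimplification, so that both cuspidal constituents embed as actual subrepresentations and the direct-sum structure is realised inside $i^G_B(\chi)$.
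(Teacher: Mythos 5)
Your proposal is correct and takes essentially the same route as the paper: the paper's own proof is simply ``similar to the proof of Theorem~\ref{urprincipaldecomp}'', and your filled-in version mirrors that template (apply Theorem~\ref{SZadapt} to compute $[\R_{\kappa_m}(i^G_B(\chi))]$, use Lemma~\ref{lemmacus} with the $\R_{\kappa_m}\I_{\kappa_{m'}}$ orthogonality to pin down the cuspidal constituents, count with the geometric lemma plus Lemma~\ref{lemma611} to get length four, and argue as in Lemma~\ref{lemma65} to realise the cuspidal pieces as a direct sum inside the middle layer).

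One remark on the ``obstacle'' you flag at the end: it is not actually a gap, and your own set-up already resolves it. Theorem~\ref{SZadapt} indeed only controls the semisimplification, so one cannot read the module structure of $\R_{\kappa_m}(i^G_B(\chi))$ off directly. But the relevant deduction is: $\R_{\kappa_m}(\pi_1)\neq 0$ for $\pi_1$ the irreducible subrepresentation and likewise $\R_{\kappa_m}(\pi_q)\neq 0$ for $\pi_q$ the irreducible quotient (Lemma~\ref{lemma64}), and neither can have a cuspidal composition factor (your adjunction/contragredient argument). Since $[\R_{\kappa_m}(i^G_B(\chi))]$ has exactly two non-cuspidal factors, each of $\R_{\kappa_m}(\pi_1)$ and $\R_{\kappa_m}(\pi_q)$ must consist of exactly one copy of $(\overline{\chi}\circ\det)\otimes\overline{1}$. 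Exactness of $\R_{\kappa_m}$ then forces $[\R_{\kappa_m}(M/\pi_1)]=[\overline{\sigma}(\overline{\chi})\otimes\overline{1}]$, a single composition factor; a module with irreducible semisimplification \emph{is} irreducible, so $\R_{\kappa_m}(M/\pi_1)\simeq\overline{\sigma}(\overline{\chi})\otimes\overline{1}$ outright, and the adjunction then produces the genuine embeddings $\I_{\kappa_m}(\overline{\sigma}(\overline{\chi})\otimes\overline{1})\hookrightarrow M/\pi_1$. There is no residual issue of ``irreducible only after semisimplification'' --- that phrase is vacuous --- and the direct sum follows as you say since the two images have distinct supercuspidal supports.
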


\begin{proof} The proof is similar to the proof of Theorem \ref{urprincipaldecomp}.
\end{proof}

\section{Supercuspidal support}

\begin{thm}\label{scuspsupp}
Let $G$ be an unramified unitary group in three variables and $\pi$ an irreducible $\ell$-modular representation of $G$.  Then the supercuspidal support of $\pi$ is unique up to conjugacy. 
\end{thm}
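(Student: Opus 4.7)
The plan is to use the fact that $\U(2,1)$ has only two Levi subgroups up to $G$-conjugacy: $G$ itself and the maximal torus $T$. Since $T$ has no proper parabolic subgroups, every irreducible representation of $T$ is automatically supercuspidal; the supercuspidal support of $\pi$ is thus either $\{(G,\pi)\}$ if $\pi$ is supercuspidal, or else a non-empty set of pairs $(T,\chi)$. The proof will split into three cases depending on the cuspidal nature of $\pi$.

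If $\pi$ is supercuspidal, unicity is tautological. If $\pi$ is not cuspidal, then $r^G_B(\pi)\neq 0$; if $(T,\chi_1)$ and $(T,\chi_2)$ both occur in the supercuspidal support, then the geometric lemma applied to $i^G_B(\chi_i)$ forces each $\chi_i$ or its $w_x$-conjugate to appear among the irreducible Jordan--H\"older factors of $r^G_B(\pi)$. As this set of factors is non-empty and fixed by $\pi$, we obtain $\{\chi_1,\chi_1^{w_x}\}\cap\{\chi_2,\chi_2^{w_x}\}\neq\emptyset$, so $\chi_1$ and $\chi_2$ are $W(G,T)$-conjugate.

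The substantive case is $\pi$ cuspidal and not supercuspidal. By Theorem \ref{plcusps} and Remark \ref{scusplift}, $\pi\simeq \I_{\kappa'}(\sigma)$ with $\sigma$ an irreducible cuspidal non-supercuspidal representation of $\M(\Lambda'_E)$. Suppose $\pi$ appears as a subquotient of both $i^G_B(\chi_1)$ and $i^G_B(\chi_2)$. Applying Lemma \ref{lemmacus} to each $\chi_i$ forces the semisimple character datum coming from the $G$-cover of $\chi_i$ (as in Lemma \ref{plcovers}) to agree up to transfer with the one underlying $\pi$, so after $G$-conjugation we arrange a common skew semisimple stratum $[\Lambda,n,0,\beta]$ with compatible $\beta$-extension $\kappa$ underlying both $\chi_1$ and $\chi_2$. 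Then Theorem \ref{SZadapt}, combined with Theorem \ref{pl=lz} and Lemma \ref{lemma111}, gives that $\sigma$ is a cuspidal subquotient of $i^{\M(\Lambda_E^m)}_{B(\Lambda_E^m)}(\R_{\kappa_T}(\chi_i))$ for an appropriate $m\in\{x,y\}$; the known unicity of supercuspidal support over the finite reductive group $\M(\Lambda_E^m)$ (Section \ref{cuspU11U21}) then forces $\R_{\kappa_T}(\chi_1)$ and $\R_{\kappa_T}(\chi_2)$ to be Weyl-conjugate as characters of the finite torus.

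The remaining and most delicate step is to promote this Weyl-conjugacy of restricted characters to $W(G,T)$-conjugacy of the original $\chi_1,\chi_2$, since $\chi\mapsto \R_{\kappa_T}(\chi)$ determines $\chi$ only up to unramified twist. I would resolve this by direct inspection: Theorem \ref{redpoints} enumerates the finitely many inertial classes at which $i^G_B(\chi)$ is reducible, and Theorems \ref{urprincipaldecomp}, \ref{RamifiedPrincseries} and \ref{poslevramdecomp} give, for each such $\chi$, the complete list of cuspidal subquotients of $i^G_B(\chi)$. Comparing these lists, one verifies for each cuspidal non-supercuspidal representation of $G$ that the set of characters $\chi$ such that $\pi$ occurs in $i^G_B(\chi)$ is a single $W(G,T)$-orbit. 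The case requiring the most care is $\ell\mid q+1$ (in particular Theorem \ref{urprincipaldecomp}(4)), where several distinct cuspidal constituents occur in a single principal series and one must check that no such constituent is also contained in a principal series attached to a non-conjugate character.
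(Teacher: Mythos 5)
Your proof is correct and takes essentially the same route as the paper: the non-cuspidal case reduces to unicity of cuspidal support (you spell out the geometric-lemma step the paper leaves implicit), and the cuspidal non-supercuspidal case is closed by inspecting the explicit principal series decompositions in Theorems \ref{urprincipaldecomp}, \ref{RamifiedPrincseries} and \ref{poslevramdecomp}. The intermediate reduction you insert --- matching semisimple character data via Lemma \ref{lemmacus} and passing to the finite reductive quotient via Theorem \ref{SZadapt} --- is precisely the machinery already deployed inside the proofs of those decomposition theorems, so re-deriving it here is sound but somewhat redundant; and, as you correctly flag, it does not by itself resolve the unramified-twist ambiguity, so the final case-by-case inspection of the decompositions is what actually closes the argument, exactly as in the paper.
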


\begin{proof}
Suppose $\pi$ is not cuspidal.  Then the supercuspidal support of $\pi$ is equal to the cuspidal support of $\pi$ and is thus unique up to conjugacy.  If $\pi$ is cuspidal non-supercuspidal then it appears in one of the decompositions given in Theorems \ref{urprincipaldecomp}, \ref{RamifiedPrincseries}, and \ref{poslevramdecomp}, or is a twist of such a representation by a character that factors through the determinant map, and we see that the supercuspidal support of $\pi$ is unique up to conjugacy.
\end{proof}

\bibliographystyle{plain}
\bibliography{typesandmodrep}

\end{document}